\theoremstyle{plain}
\newtheorem{theorem}{Theorem}
\newtheorem{prop}[theorem]{Proposition}
\newtheorem{cor}[theorem]{Corollary}
\newtheorem{lemma}[theorem]{Lemma}
\newtheorem*{lemma*}{Lemma}
\theoremstyle{definition}
\newtheorem{definition}[theorem]{Definition}
\theoremstyle{remark}
\newtheorem{rmk}{Remark}
\newtheorem{example}{Example}
\DeclareMathOperator{\spn}{\mathrm{span}}
\DeclareMathOperator{\rank}{\mathrm{rank}}
\DeclareMathOperator{\tr}{\mathrm{Tr}}
\let\div\relax\DeclareMathOperator{\div}{\mathrm{div}}
\DeclareMathOperator{\grad}{\mathrm{grad}}
\newcommand{\R}{\mathbb{R}}						
\newcommand{\distr}{\blacktriangle}				
\newcommand{\V}{\mathbf{c}}						
\newcommand{\ad}{\mathrm{ad}}					
\newcommand{\popp}{\mathcal{P}}					
\newcommand{\Z}{Z}								
\newcommand{\metr}{\mathbf{g}}					
\newcommand{\g}{\mathbf{g}}						
\newcommand{\gr}{\mathbf{gr}}
\renewcommand{\hat}{\widehat}
\newcommand{\bqn}{\begin{eqnarray}}
\newcommand{\eqn}{\end{eqnarray}}
\newcommand{\bi}{\begin{itemize}}
\newcommand{\ei}{\end{itemize}}
\newcommand{\bc}{\begin{cor}}
\newcommand{\ec}{\end{cor}}
\newcommand{\bp}{\begin{prop}}
\newcommand{\ep}{\end{prop}}
\newcommand{\bt}{\begin{theorem}}
\newcommand{\et}{\end{theorem}}
\newcommand{\eps}{\varepsilon}
\newcommand{\hh}{h}
\newcommand{\cyl}{
\begin{tikzpicture}
\tikzset{every node/.style={cylinder, shape border rotate=90, draw}}
\node [minimum height=9pt, minimum width = 9pt, aspect =.3, scale=0.8, very thin] at (0,0) {};
\end{tikzpicture}
}
\newcommand{\lp}{\left(}
\newcommand{\rp}{\right)}
\newcommand{\lb}{\left[}
\newcommand{\rb}{\right]}
\newcommand{\lc}{\left\{}
\newcommand{\rc}{\right\}}
\newcommand{\lab}{\left|}
\newcommand{\rab}{\right|}
\newcommand{\E}{\mathbb{E}}
\newcommand{\bR}{\mathbb{R}}
\newcommand{\sM}{\mathcal{M}}
\newcommand{\bS}{\mathbb{S}}
\title{Intrinsic random walks and sub-Laplacians in sub-Riemannian geometry}
\author[1]{Ugo Boscain}
\author[2]{Robert Neel}
\author[3]{Luca Rizzi}
\affil[1]{CNRS, CMAP \'Ecole Polytechnique, and \'Equipe INRIA GECO Saclay \^Ile-de-France, Paris}
\affil[2]{Department of Mathematics, Lehigh University, Bethlehem, PA, USA}
\affil[3]{Univ.\ Grenoble Alpes, CNRS, Institut Fourier, F-38000 Grenoble, France}
\date{\today}
\begin{document}

\maketitle

\begin{abstract}

On a sub-Riemannian manifold we define two type of Laplacians. The \emph{macroscopic Laplacian} $\Delta_\omega$, as the divergence of the horizontal gradient, once a volume $\omega$ is fixed, and the \emph{microscopic Laplacian}, as the operator associated with a sequence of geodesic random walks. We consider a general class of random walks, where \emph{all} sub-Riemannian geodesics are taken in account. This operator depends only on the choice of a complement $\V$ to the sub-Riemannian distribution, and is denoted $L^\V$.

We address the problem of equivalence of the two operators. This problem is interesting since, on equiregular sub-Riemannian manifolds, there is always an intrinsic volume (e.g. Popp's one $\popp$) but not a canonical choice of complement. The result depends heavily on the type of structure under investigation:

\begin{itemize}
\item On contact structures, for every volume $\omega$, there exists a unique complement $\V$ such that $\Delta_\omega=L^\V$.
\item On Carnot groups, if $H$ is the Haar volume, then there always exists a complement $\V$ such that $\Delta_H=L^\V$. However this complement is \emph{not unique} in general.
\item For quasi-contact structures, in general, $\Delta_\popp\neq L^\V$ for any choice of $\V$. In particular, $L^\V$ is not symmetric with respect to Popp's measure. This is surprising especially in dimension $4$ where, in a suitable sense, $\Delta_\popp$ is the unique intrinsic macroscopic Laplacian.
\end{itemize}
A crucial notion that we introduce here is the \emph{N-intrinsic volume}, i.e.\ a volume that depends only on the set of parameters of the nilpotent approximation. When the nilpotent approximation does not depend on the point, a N-intrinsic volume is unique up to a scaling by a constant and the corresponding N-intrinsic sub-Laplacian is unique. This is what happens for dimension less than or equal to $4$, and in particular in the 4-dimensional quasi-contact structure mentioned above.

Finally, we prove a general theorem on the convergence of families of random walks to a diffusion, that gives, in particular, the convergence of the random walks mentioned above to the diffusion generated by $L^\V$.
\end{abstract}

\tableofcontents

\section{Introduction}\label{s:intro}

\subsection{The Riemannian setting}\label{RiemIntro}

Let $M$ be a (smooth, connected, orientable,  complete) $n$-dimensional  Riemannian manifold. In Riemannian geometry  the infinitesimal  conservation condition for a smooth scalar quantity $\phi$, a function of the point $q$ and of the time $t$ (for example, the temperature, the concentration of a diffusing substance, the noise, the probability density of a randomly-moving particle, etc.), that flows via a flux $F$
(which says how much of $\phi$ is, infinitesimally, crossing a unit area of surface and in a unit of time) is expressed via the ``continuity'' equation $\partial_t\phi +\div(F)=0$, where $\div(\cdot)$ is the divergence computed with respect to the Riemannian volume. If one postulates that the flux is proportional to minus the Riemannian gradient (the constant of proportionality being fixed to 1 for simplicity), that is $F=-\grad(\phi)$,  one obtains the Riemannian heat equation
\begin{equation}\label{heat-1}
\partial_t\phi =\Delta\phi,
\end{equation}
where $\Delta = \div \circ \grad$ is the Laplace-Beltrami operator.
Since equation~\eqref{heat-1} has been obtained from a continuity equation that views $\phi$ as a fluid without a microscopic structure (the fluid is modeled as a continuous substance), in the following we refer to it as to the {\em macroscopic heat equation} and to the corresponding operator $\Delta$ as the {\em macroscopic Laplacian}. It is useful to write $\Delta$ in terms of an orthonormal frame. If $ X_1,\ldots, X_n $ is a local orthonormal frame for the Riemannian structure we have the formulas
\begin{equation}
\grad(\phi)=\sum_{i=1}^n X_i(\phi) X_i \qquad \text{and} \qquad \Delta(\phi)=\sum_{i=1}^n (X_i^2 +\div(X_i)X_i   )(\phi).
\end{equation}
It is well known that the heat equation (\ref{heat-1})  also admits a stochastic interpretation as the evolution equation associated to a diffusion process. Actually, we need to be more precise here, since there are two evolution equations associated to a diffusion process. In particular, there is the evolution of the expectation of a test function $f\in C_0^{\infty}(M)$ evaluated along the paths of the diffusion, called the forward Kolmogorov equation, and there is the evolution equation for the transition density of the diffusion (relative to a smooth volume), called the backward Kolmogorov equation. The second-order operators in these two equations are adjoints (with respect to the same smooth volume as the transition density). However, thanks to the geodesic completeness assumption, the  Laplace-Beltrami operator is essentially self-adjoint (with respect to the Riemannian volume) so that both operators, and thus both parabolic PDEs, coincide.

Moreover, the diffusion process associated to the (Riemannian) heat equation can be obtained as a (parabolic) scaling limit of a random walk. We think of the random walk as giving a ``microscopic'' viewpoint on the evolution of the quantity measured by $\phi$, since, at least in an idealized sense, it models the motion of an individual particle. In taking the scaling limit, we pass from the small-scale behavior of an individual particle to the large-scale average behavior of the particle, or equivalently, to the aggregate  behavior of a large number of such particles, and thus pass from the microscopic to the macroscopic viewpoint. We now need to explain these random walks and the associated ideas in more detail.

In the Riemannian case currently under discussion, we are interested in an isotropic random walk. In particular, starting from a point $q$, the unit sphere $\mathbb{S}^{n-1}$ in the tangent space $T_{q}M$ has the standard $(n-1)$-volume induced by the metric on $T_{q}M$. Normalizing this to a probability measure $\mu_{q}$ (by dividing by the total volume $2\pi^{n/2}\Gamma(n/2)$), we can then choose a direction $\theta\in\bS^{n-1}$ at $q$ randomly, in a way which is obviously isotropic with respect to the Riemannian structure. The particle then travels along the geodesic tangent to $\theta$ for a distance $\eps$ in time $\delta t$ (at constant speed, determined by these conditions). We let $X^{\eps}_t$ be the (random) position of the particle at time $t\in[0,\delta t]$. We can continue this process by next choosing a direction, at the point $X^{\eps}_{\delta t}$, at random via the measure $\mu_{X^{\eps}_{\delta t}}$ on the unit sphere $\bS^{n-1}\subset T_{X^{\eps}_{\delta t}}M$, and independently from the previous choice of direction $q$. Then the particle follows the geodesic from $X^{\eps}_{\delta t}$ in this direction for distance $\eps$ in time $\delta t$. We can continue this process indefinitely, since after $i$ steps, the particle has traveled along a piecewise geodesic for a total distance of $i\eps$, and  since we are assuming that $M$ is geodesically complete, a next step can always be made (equivalently, this process cannot explode by exiting every compact set in finite time). The result is a random (piecewise geodesic) path $X^{\eps}_t$ (for $t\in[0,\infty)$) starting from $q=X^{\eps}_0$. Further, the positions of the particle at the times when it randomly changes direction, namely, the sequence $X^{\eps}_0, X^{\eps}_{\delta t},X^{\eps}_{2\delta t},\ldots$ is a Markov chain on $M$, and for $t\in(i\delta t,(i+1)\delta t)$, $X^{\eps}_t$ interpolates between $X_{i\delta t}$ and $X_{(i+1)\delta t}$ along a geodesic between them (for $i=0,1,2,\ldots$).

As mentioned, we are interested in a scaling limit of such random walks (and this is why we have already indexed our walk $X^{\eps}_t$ by the step-size $\eps$). We will take $\delta t= \eps^2/\alpha$, where $\alpha$ is a normalizing constant to be chosen later. Then $\delta t\rightarrow 0$ as $\eps\rightarrow 0$, and moreover, this is the parabolic scaling (which we note is in a sense responsible for the infinite velocity of propagation of the heat).

We are interested in the behavior of a single step under this parabolic scaling, which by the Markov property and homogeneity in time may as well be the first step. If we consider the change in the expectation of a function $\phi\in C_0^{\infty}(M)$ sampled after the step, normalized by dividing by $\delta t$, we obtain an operator which we denote by $L_{\eps}$. More concretely, we have
\begin{equation}
\begin{split}
(L_{\eps} \phi)(q) &= \frac{ \E\lb \left . \phi\lp X^{\eps}_{\delta t}\rp \right| X^{\eps}_0=q\rb -\phi\lp q\rp}{\delta t} \\
&= \frac{\alpha}{\eps^2}\lp \E\lb \left . \phi\lp X^{\eps}_{\eps^2/\alpha}\rp \right| X^{\eps}_0=q\rb -\phi\lp q\rp \rp \\
&=  \frac{\alpha}{\eps^2} \lp \int_{\bS^{n-1}}\phi\lp\exp_{q} \lp\eps,\theta\rp\rp \mu_{q}\lp \theta\rp -\phi(q)\rp ,
\end{split}
\end{equation}
where $\exp_{q}(\eps,\theta)$ is the point obtained by following the arc-length parametrized geodesic from $q$ in the direction of $\theta$ for distance $\eps$.

It turns out that the limiting behavior of the sequence of random walks, as $\eps\rightarrow 0$, is governed by the limiting behavior of these operators. Indeed, Section \ref{a:randomwalk} is devoted to a discussion of this issue and to the proof of theorems on the convergence of a sequence of random walks to a diffusion in the Riemannian or sub-Riemannian context (see Theorem~\ref{AppendixConvergence} for a general convergence result and Theorem~\ref{AppendixConvergence2} for the case of random walks of the type just described) . At any rate, expanding the exponential map in normal coordinates around $q$ shows that $L_{\eps}$ converges to an operator $L$ as $\eps\rightarrow 0$ (in the sense that $L_{\eps}\phi\rightarrow L\phi$ uniformly on compacts for $\phi\in C_0^{\infty}(M)$), and that $L=\frac{\alpha}{2n}\Delta$ (the computation is a special case of the (sub)-Riemannian computations below, so we don't reproduce it). It follows that the sequence of random walks converges to the diffusion generated by $L$ (or the diffusion that solves the martingale problem for $L$), which we denote $X^0_t$. Thus, if $\phi$ is a smooth scalar quantity depending on a point $q$ and a time $t$ such that its evolution is governed infinitesimally by the dynamics of the random walk (in the sense that $\partial_t \phi(q,t)=\lim_{\delta\rightarrow0}\frac{1}{\delta}\E\lb \phi\lp X^0_{\delta},t\rp -\phi(q)|X^0_0=q\rb$), we obtain the equation
\bqn
\partial_t \phi=L\phi,
\label{eq-heat-2}
\eqn
where
\bqn
L\phi(q,t)=\lim_{\eps\to0}  \frac{\alpha}{\eps^2} \lp\int_{\bS^{n-1}}\phi\lp\exp_{q} (\eps,\theta),t\rp  \mu_{q}\lp \theta\rp -\phi(q,t)\rp.
\eqn
In the following we refer to the equation (\ref{eq-heat-2}) as to the ``microscopic heat equation'' and to  $L$ as  to the ``microscopic  Laplacian''. As we've already seen, if we take $\alpha=2n$, then we have
\begin{equation}\label{eq-Delta=L}
\Delta=L.
\end{equation}
(As an aside, the need for the normalizing constant $\alpha$ to grow with the dimension $n$ is a manifestation of the concentration of measure on the sphere.) Hence the macroscopic diffusion equation and the microscopic one coincide.

Further, define the heat kernel $p_t(q,q_0)$ as the density of the random variable $\left. X^0_t\right| X^0_0=q_0$ with respect to the Riemannian volume $\mathcal{R}$.
Said more analytically, $p_t$ is the fundamental solution to Equation \eqref{eq-heat-2}, so that
\begin{equation}
\phi(q_0,t) = \int_M \phi(q) p_t(q,q_0)  \mathcal{R}(q) = \E\lb \phi\lp X^0_t\rp -\phi(q_0) |X^0_0=q_0 \rb
\end{equation}
solves the Cauchy problem for Equation \eqref{eq-heat-2} with initial condition $\phi(q_0,0)=\phi(q_0)$. Then because $L$ is  essentially self-adjoint with respect to the Riemannian volume (it's just a scalar multiple of $\Delta$), $p_t(\cdot,q_0)$ also satisfies the microscopic heat equation \ref{eq-heat-2}. Thus $p_t$, which measures the probability density of the random paths themselves, rather than a quantity that is sampled along them, can be understood in terms of the same equations.

All of this can be viewed in the following way. On one side, the microscopic perspective is a good interpretation of the macroscopic heat equation. On the other side, the microscopic Laplacian $L$ is a good operator because it is essentially self-adjoint with respect to a volume (the Riemannian one).  This is due to the fact that it is symmetric since it can be written in divergence form thanks to Equation \eqref{eq-Delta=L} and to the geodesic completeness of the manifold.

The essential self-adjointness of $L$ with respect to some volume $\omega$, even if not necessary to define the corresponding process, is important, both because it means that the heat kernel satisfies the same equation and because it permits one to study the evolution equation \eqref{eq-heat-2} in $L^2(M,\omega)$. See Theorem~\ref{t:selfad} below.

\begin{rmk}
The Riemannian volume $\mathcal{R}$ can be defined equivalently by $\mathcal{R}(X_1,\ldots,X_n)=1$ for any oriented local orthonormal frame $X_1,\ldots,X_n$, or as the $n$-dimensional Hausdorff or spherical-Hausdorff volume (up to a constant). The above construction gives an alternative to characterize the Riemannian volume: it is the unique volume (up to constant rescaling) such that the microscopic Laplacian can be written in divergence form. These fact are much less trivial in the sub-Riemannian context. Indeed in sub-Riemannian geometry there are several notions of intrinsic volumes and the definition of the microscopic Laplacian requires   additional structure.
\end{rmk}

\subsection{The sub-Riemannian setting}

In this paper, a sub-Riemannian structure is a triple $(M,\distr,\metr)$, where $M$ is a $n$-dimensional differentiable manifold, $\distr$ is a smooth  distribution of constant rank $k<n$ satisfying the H\"ormander condition and $\metr$ is a Riemannian metric on $\distr$. Locally the structure can be assigned by an orthonormal frame $X_1,\ldots,X_k \in \Gamma(\distr)$. Here, $\Gamma(E)$ denotes the $C^\infty(M)$-module of smooth sections of any vector bundle $E$ over $M$. 

With the term (sub)-Riemannian, we mean structures that can be also Riemannian, i.e. defined as above but with $k\leq n$. Riemannian manifolds are by definition equiregular. All the information about the structure is contained in the Hamiltonian function $H: T^*M \to \R$ defined by
\begin{equation}
H(\lambda) = \frac{1}{2}\sum_{i=1}^k \langle \lambda,X_i \rangle^2, \qquad \lambda \in T^*M,
\end{equation}
where $\langle \lambda, \cdot\rangle$ denotes the action of covectors on vectors. It is a well-known fact in (sub)-Riemannian geometry that integral lines $\lambda(\cdot)$ of the Hamiltonian flow defined by $H$,
laying on the level set $H=1/2$,  project to smooth curves $\gamma(t):=\pi(\lambda(t))$ on $M$ that are arc-length parametrized geodesics, i.e. $ \|\dot{\gamma}(t)\|=1$ and, for every sufficient small interval $[t_1,t_2]$, the restriction $\gamma|_{[t_1,t_2]}$ is a minimizer of the \emph{sub-Riemannian length} $\ell(\gamma) = \int_{t_1}^{t_2} \|\dot{\gamma}(t)\| dt$ (with fixed endpoints). Here $\|\cdot\|$ denotes the norm induced by $\metr$ on $\distr$.
In Riemannian geometry these curves are precisely all the arc-length parameterized  geodesics of the structure. In the sub-Riemannian case these are called \emph{normal geodesics}. There is also another class of geodesics called \emph{abnormal geodesics}, that may not follow the above dynamic.

\subsubsection{The sub-Riemannian macroscopic Laplacian}

 As we will see later, in sub-Riemannian geometry the definition of an intrinsic volume, playing the role of the Riemannian volume, is a subtle question.  For the moment, let us assume that a  volume $\omega$ is fixed on the sub-Riemannian manifold. In this case one can write the macroscopic heat equation similarly to the Riemannian case. The only difference is that one should postulate that the flux is proportional to the horizontal gradient.  The horizontal gradient $\grad_H(\cdot)$ of a $C^\infty$ function $\phi$ is defined similarly to the Riemannian gradient but is a vector field belonging to the distribution (see, for instance, \cite{laplacian}):
\begin{equation}
\metr_q(v,\grad_H(\phi)_q)=d_q\phi(v), \qquad \forall v\in\distr_q.
\end{equation}
We have then for the macroscopic heat equation
\begin{equation}
\partial_t\phi=\Delta_\omega \phi,
\end{equation}
where 
\begin{equation}\label{eq-LB-H}
\Delta_\omega =\div_\omega\circ \grad_H
\end{equation}
is the macroscopic Laplacian.
In terms of a local orthonormal frame of the sub-Riemannian manifold we have the same formulas as in the Riemannian case, but summing up to the rank of the distribution:
\begin{equation}
\grad_H(\phi)=\sum_{i=1}^k X_i(\phi) X_i, \qquad \Delta_\omega(\phi)=\sum_{i=1}^k (X_i^2 +\div_\omega(X_i)X_i   )(\phi).
\end{equation}
Since $\grad_H$ coincides with the standard gradient in the Riemannian case, we suppress the $H$ from the notation.

\subsubsection{On intrinsic volumes}

The difficulty concerning the definition of a macroscopic Laplacian is related to the choice of $\omega$. What is the ``correct'' volume in the sub-Riemannian case, the analogue to the Riemannian one? One needs some algorithm to assign, with any sub-Riemannian structure on $M$, a volume form $\omega_M$. Moreover, the construction of $\omega_M$ should depend, loosely speaking, only on the metric invariants of the structure.

\begin{definition}
An \emph{intrinsic definition of volume} is a map that associates, with any (oriented) (sub)-Riemannian structure $(M,\distr,\g)$ a volume form $\omega_M$ on $M$ such that if $\phi: M \to N$ is a sub-Riemannian isometry between $(M,\distr_M,\g_M)$ and $(N,\distr_N,\g_N)$, then $\phi^* \mu_N  = \mu_M$.
\end{definition}
\begin{rmk}
In order to avoid this verbose terminology, with the term ``intrinsic volume'' we mean either the actual volume form $\omega_M$ or the definition of volume given by a map $M \mapsto \omega_M$.
\end{rmk}
Even in the Riemannian case, there are many intrinsic volumes. The classical Riemannian one is the unique volume form $\mathcal{R}$ such that $\mathcal{R}(X_1,\ldots,X_n)=1$ for any (oriented) orthonormal frame. But what about the volume form defined by $\mathcal{R}'(X_1,\ldots,X_n) = 1+\kappa^2$, where $\kappa$ is the scalar curvature? Both are perfectly fine definitions of volume, according to our definition. The first, loosely speaking, is more ``intrinsic'' than the second. In fact, it is true that both depend only on the metric invariant  of the structure, but $\mathcal{R}'$ involves second-order information about the structure. To rule out $\mathcal{R}'$ we need a more precise definition.

Roughly speaking we say that an intrinsic definition of volume is \emph{N-intrinsic} if its value of $\omega_M$ at $q$ depends only on the metric invariants of the nilpotent approximation of the structure at $q$ (the metric tangent space to the structure). For the precise definition see Section \ref{s-intrinsic}. In the Riemannian case, there is only one nilpotent approximation, which is the flat $\R^n$, and it has no non-trivial metric invariants. As a consequence, there is a unique N-intrinsic volume, the Riemannian one. As is well known, in the sub-Riemannian case, nilpotent approximations may be different at different points. 
\begin{definition}
A (sub)-Riemannian manifold is \emph{equi-nilpotentizable} if nilpotent approximations at different points are isometric.
\end{definition}
As just said above, Riemannian manifolds are equi-nilpotentizable. Equi-nilpotentizable structures are equiregular (see Section~\ref{s:prel}). By definition, all Carnot groups are equi-nilpotentizable.
 It is well known that all equiregular sub-Riemannian structures in dimension less than or equal to four are equi-nilpotentizable (see \cite{ABB-Hausdorff}). In particular this is true for 3D sub-Riemannian manifolds (the nilpotent approximation is given by the Heisenberg group $\mathbb{H\\
 }_3$ at each point). The simplest non-equi-nilpotentizable sub-Riemannian structure is given by a generic structure of rank $4$ in dimension $5$. Other examples of non-equi-nilpotentizable  sub-Riemannian manifolds are given by generic contact manifolds of dimension greater than or equal to 5 and by generic quasi-contact sub-Riemannian manifolds in dimension greater than or equal to 6.

For equiregular sub-Riemannian structures a general N-intrinsic volume, called \emph{Popp volume} and denoted with $\popp$, was defined in Montgomery's book \cite{montgomerybook} and later studied in \cite{laplacian,nostropopp,ABB-Hausdorff}. This definition was pioneered by Brockett for contact structures in \cite{brockett}. It turns out that, in the Riemannian case, Popp's construction recovers the Riemannian volume. Moreover, in \cite{ABB-Hausdorff}, it has been proved that, for non-equinilpotentizable structures, Popp volume does not coincide with either the Hausdorff or spherical Hausdorff volume. Finally, in Section~\ref{s-intrinsic} we prove the following result.
\begin{prop}
Let  $(M,\distr,\metr)$ be an  equi-nilpotentizable (sub)-Riemannian manifold. Then Popp volume $\popp$ is the unique N-intrinsic definition of volume, up to a multiplicative constant.
\end{prop}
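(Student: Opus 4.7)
The overall strategy is to express any N-intrinsic volume as a constant multiple of $\popp$, with the equi-nilpotentizable hypothesis entering once, at the end. Since $\popp$ is itself N-intrinsic by its construction in \cite{montgomerybook,nostropopp}, for any other N-intrinsic volume $\omega$ the ratio $f(q) := \omega_q / \popp_q$ is a well-defined smooth positive function on $M$, and it suffices to show that $f$ is constant.

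The key algebraic input is the definition of N-intrinsicness: the value $\omega_q \in \Lambda^n T_q^*M$ is determined by the metric invariants of the nilpotent approximation $(T_qM,\hat{\distr}_q,\hat{\g}_q)$ at $q$. Concretely, this will mean that any (orientation-preserving) isometry of nilpotent approximations $\phi:(T_{q_1}M,\hat{\distr}_{q_1},\hat{\g}_{q_1})\to (T_{q_2}M,\hat{\distr}_{q_2},\hat{\g}_{q_2})$ satisfies $\phi^*\popp_{q_2} = \popp_{q_1}$ and $\phi^*\omega_{q_2} = \omega_{q_1}$, simply by applying the intrinsicness property of each volume. Dividing the two identities gives $f(q_1) = f(q_2)$. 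The equi-nilpotentizable hypothesis now supplies such an isometry for any pair $q_1,q_2 \in M$, so $f$ takes the same value at every point, hence is a constant $c>0$, and $\omega = c\,\popp$.

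The main technical point is to verify that the precise definition of N-intrinsic given in Section~\ref{s-intrinsic} really does entail the equivariance $\phi^*\omega_{q_2}=\omega_{q_1}$ under isometries of nilpotent approximations, and to handle orientations (for example, by passing to unsigned densities, or by restricting to the orientation-preserving component of the isotropy group of the model nilpotent approximation and observing that an orientation-reversing component merely affects the sign of $\omega$ and of $\popp$ in the same way, leaving $f$ unchanged). Once this equivariance is in hand, the argument reduces to the short ratio computation above, and no further analysis is needed; in particular, the proof is completely formal modulo the definitions set up earlier in the paper.
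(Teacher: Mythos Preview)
Your approach is essentially the same as the paper's (Theorem~\ref{t:equinilp}): form the ratio $f=\omega/\popp$, use N-intrinsicness to transport $f(q)$ to the ratio $\omega_{\hat M_q}(0)/\popp_{\hat M_q}(0)$ on the nilpotent approximation via $\theta_q^*$, then use ordinary intrinsicness (isometry-invariance) on the Carnot groups $\hat M_q$ together with equi-nilpotentizability to conclude constancy. The only point the paper makes more explicit is Lemma~\ref{l:constantf}: before comparing $\hat M_{q_1}$ and $\hat M_{q_2}$, one first uses left translations to see that the ratio is constant on each $\hat M_q$ (equivalently, to arrange that the isometry $\phi$ sends $0$ to $0$), so that evaluating at the origin is meaningful; your write-up absorbs this into the phrase ``simply by applying the intrinsicness property,'' which is fine once one notes that left translations are isometries of the Carnot structure. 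One notational caution: your displayed identity $\phi^*\omega_{q_2}=\omega_{q_1}$ with $\omega_{q_i}\in\Lambda^n T_{q_i}^*M$ is not literally what intrinsicness gives, since $\phi$ is a map between the $\hat M_{q_i}$, not between the $T_{q_i}M$; the correct statement is $\phi^*\omega_{\hat M_{q_2}}=\omega_{\hat M_{q_1}}$, and it is only after passing through $\theta_q^*$ (which preserves ratios, being a linear isomorphism of lines) that one recovers $f(q_1)=f(q_2)$.
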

 In this sense Popp volume generalizes the Riemannian one. When $\omega$ is an N-intrinsic volume, we say that $\Delta_\omega$ is an \emph{N-intrinsic macroscopic Laplacian}. Since the divergence of a vector field does not change if the volume is multiplied by a constant, we have the following.
\begin{prop}
Let $(M,\distr,\metr)$ be an equi-nilpotentizable (sub)-Riemannian manifold. Then there exists a unique N-intrinsic macroscopic Laplacian, $\Delta_\popp$ that is built with Popp's volume.
\end{prop}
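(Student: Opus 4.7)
The plan is to combine the preceding proposition (uniqueness of Popp's volume up to a multiplicative constant among N-intrinsic volumes) with the elementary fact that divergence is insensitive to constant rescaling of the reference volume; consequently $\Delta_\omega$ is insensitive to such rescaling, and the ``up to a constant'' ambiguity at the level of volumes disappears at the level of the Laplacian.

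First I would invoke the previous proposition to reduce the problem: on an equi-nilpotentizable (sub)-Riemannian manifold, any N-intrinsic volume $\omega$ is of the form $\omega = c\,\popp$ for some constant $c\in\R\setminus\{0\}$. Hence to show uniqueness of the N-intrinsic macroscopic Laplacian it suffices to prove that $\Delta_{c\popp} = \Delta_\popp$ for every such $c$.

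Next I would verify scaling invariance of the divergence directly from its defining relation $\mathcal{L}_X \omega = \div_\omega(X)\,\omega$. For any volume form $\omega$, any vector field $X\in\Gamma(TM)$, and any constant $c$, linearity of the Lie derivative gives
\begin{equation}
\mathcal{L}_X(c\omega) \;=\; c\,\mathcal{L}_X\omega \;=\; c\,\div_\omega(X)\,\omega \;=\; \div_\omega(X)\,(c\omega),
\end{equation}
so by the defining relation applied to $c\omega$ we obtain $\div_{c\omega}(X) = \div_\omega(X)$. In particular, the operator $\div_\omega$, and therefore also the composition $\Delta_\omega = \div_\omega\circ\grad_H$ (note that $\grad_H$ depends only on $\distr$ and $\metr$, not on $\omega$), is unchanged when $\omega$ is multiplied by a nonzero constant.

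Combining these two observations: for any N-intrinsic volume $\omega = c\,\popp$ one has $\Delta_\omega = \Delta_{c\popp} = \Delta_\popp$. Thus $\Delta_\popp$ is the unique N-intrinsic macroscopic Laplacian, proving the proposition. There is no real obstacle here — the statement is an immediate corollary of the preceding proposition together with the scaling invariance of the divergence, and indeed the author flags this explicitly in the sentence preceding the statement.
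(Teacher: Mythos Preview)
Your proof is correct and follows exactly the approach indicated in the paper: the proposition is stated immediately after the sentence ``Since the divergence of a vector field does not change if the volume is multiplied by a constant, we have the following,'' and your argument simply fills in the (elementary) verification of that scaling invariance together with the appeal to the preceding proposition.
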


\subsubsection{The sub-Riemannian microscopic Laplacian}

Another problem in the sub-Riemannian case is the definition of microscopic Laplacian, as a limit process coming from a random walk along geodesics starting from a given point. Indeed, while in the Riemannian context the geodesics starting from a given point can always be parametrized by the direction of the initial velocity, i.e., by the points of a $(n-1)$-dimensional sphere, in the sub-Riemannian context the geodesics starting from a given point are always parameterized by a non-compact set, namely by the points of a cylinder $\cyl_{q}=H^{-1}(1/2)\cap T_{q}^\ast M$ having the topology of $\mathbb{S}^{k-1}\times \R^{n-k}$. How to define an intrinsic finite volume on $\cyl_{q}$,  and thus a probability measure, is a non-trivial question. 

Moreover, while in Riemannian geometry, for sufficiently small $\eps$ the Riemannian metric sphere of radius $\eps$ centered at $q$ coincides with the endpoints of the geodesics starting from $q$ and having length $\eps$, in sub-Riemannian geometry this is not true: for a fixed a point  $q$, there are geodesics starting from $q$ that lose optimality arbitrarily close to $q$. Hence one has to decide whether to average only on the sub-Riemannian sphere (i.e., on geodesics that are optimal up to length $\eps$) or on the sub-Riemannian front (i.e., on all geodesics of length $\eps$).
Another approach is to choose a $k$-dimensional linear subspace $\mathbf{h}_q$ in $T^*_{q}M$ transverse to the non compact directions of the cylinder (playing the role of the ``most horizontal'' geodesics) and to average on $\mathbf{h}_q\cap\cyl_{q}$. This last approach is essentially the one followed in \cite{GordLae,GordLaeOlder,grong1,grong2}. Certainly the problem of finding a canonical subspace $\mathbf{h}_q$ in $T^\ast_{q}M$ is a non-trivial question and in general it is believed that it is not possible.

All these problems are encompassed in the specification of a measure $\mu_q$ (possibly singular) on $\cyl_{q}$ for all $q \in M$. Once such a collection of measures $\mu = \{\mu_{q}\}_{q \in M}$ is fixed, we define the \emph{microscopic Laplacian} as
\begin{equation}\label{eq-microscopic}
(L_{\mu}\phi)(q):=\lim_{\eps\to0}  
\frac{\alpha}{\eps^2} \Big(\int_{\cyl_{q}}\phi\big(\exp_{q} (\eps,\lambda)\big) \mu_{q}(\lambda)-\phi(q)\Big) ,
\end{equation}
 assuming the limit exists.
Here $t \mapsto \exp_{q} (t,\lambda)$ is the arc-length parametrized normal geodesics starting from $q$ with initial covector $\lambda\in\cyl_{q}$. Moreover $\alpha$ is a constant of proportionality that will be fixed later. 

\begin{rmk}\label{abnormal}
In this paper, the microscopic Laplacian is built only with the geodesics of the exponential map that, by definition, are normal.
In sub-Riemannian geometry there is also another type of geodesic, called (strict) \emph{abnormal}, that are not described by the sub-Riemannian exponential map (i.e. they are not the projection of the Hamiltonian flow described above). The ``size'' of the set of points reached by abnormal geodesics (starting from a fixed origin) is a hard open problem known as the \emph{Sard conjecture in sub-Riemannian geometry}, see for instance \cite{open,riffordbook}). It is believed that only a set of measure zero is reached by abnormal geodesics. What is known in general is that the set of points reached by optimal normal geodesics is open and dense (see \cite{smoothness,curvature} for precise statements). We notice that, on contact structures, there are no nontrivial abnormal geodesics.
 
If one would like to include also abnormal geodesics for the construction of the microscopic Laplacian, one should decide which measure give to them and in principle one could get a different operator. This research direction is beyond the purpose of the present paper.
\end{rmk}

In view of Theorem~\ref{t-lmu}, we restrict the class of possible measures, and we consider only those induced by a complement as follows. For all $q \in M$, consider a complement $\V_q$ such that $T_qM = \distr_q \oplus \V_q$. By duality $T_q^*M = \mathbf{v}_q \oplus \mathbf{h}_q$, where $\mathbf{v}_q:= \distr_q^\perp$ (resp. $\mathbf{h}_q:=\V^\perp$) denote the annihilators of $\distr_q$ (resp. $\V_q$). We can see $\mathbf{v}_q$ as the space of ``vertical'' covectors, and $\mathbf{h}_q$ the space of ``horizontal'' ones.  Now we can define a Euclidean structure on $\mathbf{h}_q$ by identifying it with $\distr_q$ (this is equivalent to considering the restriction $2H|_{\mathbf{h}_q}$, which is a positive definite quadratic form). The intersection $\mathbb{S}_q^{k-1} = \cyl_q \cap \mathbf{h}_q$ (See Figure~\ref{f:verhor} in Section~\ref{s-luca-formula}) is precisely the Euclidean sphere in this space. Then the cylinder of initial covectors splits as
\begin{equation}
\cyl_q = \mathbb{S}_q^{k-1}\times \mathbf{v}_q.
\end{equation}
We stress that this identification depends on the choice of $\V$. We restrict to the class of product measures induced by the choice of a complement on $\cyl_q$ of the form
\begin{equation}\label{eq:product}
\mu_{\V_q} =  \mu_{\mathbb{S}^{k-1}_q} \times \mu_{\mathbf{v}_q},
\end{equation}
where $\mu_{\mathbb{S}^{k-1}_q}$ is the Euclidean probability measure on $\mathbb{S}_q^{k-1} = \cyl_q \cap \mathbf{h}_q$ and $\mu_{\mathbf{v}_q}$ is any probability measure on $\mathbf{v}_q$. Moreover, we assume that $\mu_{\mathbf{v}_q}$ is sufficiently regular as a function of $q$, and $2$-decreasing, namely that any linear function on $\mathbf{v}_q$ is $L^2(\mathbf{v}_q,\mu_{\mathbf{v}_q})$ (see Definition~\ref{d:decreasing}). Any such a measure is called \emph{adapted}.

Repeating this construction at each point we recover a differential operator, as in Eq.~\eqref{eq-microscopic}, that we call $L_{\mu_\V}$. It turns out that the latter does not depend on the choice of adapted measure, but only on the complement $\V$. In Section~\ref{s-luca-formula} we prove the following main result (which does not need any equiregularity assumption).
\begin{theorem}\label{t-lmu}
Let $TM = \distr \oplus \V$ and let $\mu_\V$ any adapted measure. Then $L_{\mu_\V}$ depends only on $\V$. Moreover, let $X_1,\ldots,X_k$ a local orthonormal frame for $\distr$, and $X_{k+1},\ldots,X_n$ a local frame for $\V$. Then
\begin{equation}
L_{\mu_\V} = \sum_{i=1}^k X_i^2 + \sum_{i,j=1}^k c_{ji}^j X_i,
\end{equation}
where the structural functions $c_{ij}^\ell \in C^\infty(M)$ are defined by $[X_i,X_j] = \sum_{\ell=1}^n c_{ij}^\ell X_\ell$ for all  $i,j=1,\ldots,n$. Finally, the convergence of Eq.~\eqref{eq-microscopic} is uniform on compact sets.
\end{theorem}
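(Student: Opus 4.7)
The plan is to Taylor-expand $f(\eps) := \phi(\exp_q(\eps,\lambda))$ to second order in $\eps$ and integrate term-by-term against the product measure $\mu_{\V_q} = \mu_{\mathbb{S}^{k-1}_q} \times \mu_{\mathbf{v}_q}$. Using the dual coframe of $X_1,\ldots,X_n$, I coordinatize $\lambda \in \cyl_q$ as $\lambda = u+v$ with $u_i = \langle\lambda,X_i\rangle$ for $i\le k$ (so $\sum_i u_i^2=1$) and $v_\alpha = \langle\lambda,X_\alpha\rangle$ for $\alpha>k$. Setting $h_i = \langle\cdot,X_i\rangle$, the Hamilton equations for $H = \tfrac{1}{2}\sum_{i=1}^k h_i^2$ combined with the momentum-function identity $\{h_i,h_j\} = -\langle\cdot,[X_i,X_j]\rangle = -\sum_\ell c_{ij}^\ell h_\ell$ produce $\dot h_i(0) = -\sum_{m=1}^k u_m \sum_{\ell=1}^n c_{im}^\ell\,\lambda_\ell$. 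Since $\dot\gamma_\lambda(t) = \sum_{i=1}^k h_i(\lambda(t))\,X_i(\gamma_\lambda(t))$, a direct calculation gives
\[
 f'(0) = \sum_{i=1}^k u_i\, X_i\phi(q),\qquad
 f''(0) = \sum_{i,j=1}^k u_i u_j\, X_j X_i\phi(q) + \sum_{i=1}^k \dot h_i(0)\, X_i\phi(q).
\]

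Integrating against $\mu_{\V_q}$ collapses both expressions. The $u\mapsto -u$ symmetry of $\mu_{\mathbb{S}^{k-1}_q}$ forces $\int f'(0)\,d\mu_{\V_q} = 0$, and the same parity together with the product structure annihilates every contribution to $\dot h_i(0)$ indexed by a vertical $\ell > k$. The purely horizontal terms are then collapsed by $\int u_i u_j\,d\mu_{\mathbb{S}^{k-1}_q} = \delta_{ij}/k$, yielding
\[
 \int_{\cyl_q} f''(0)\,d\mu_{\V_q}
 = \frac{1}{k}\sum_{i=1}^k X_i^2\phi(q) - \frac{1}{k}\sum_{i,m=1}^k c_{im}^m\, X_i\phi(q)
 = \frac{1}{k}\biggl[\sum_{i=1}^k X_i^2 + \sum_{i,j=1}^k c_{ji}^j\, X_i\biggr]\phi(q),
\]
where I used the antisymmetry $c_{im}^m = -c_{mi}^m$ and relabeled $m\to j$. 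With the normalization $\alpha = 2k$ already used in the Riemannian case, dividing the expansion by $\eps^2/\alpha$ and sending $\eps\to 0$ produces exactly the formula in the statement.

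The delicate point is the remainder estimate, since $\cyl_q$ is non-compact in the $v$-directions and $\mu_{\mathbf{v}_q}$ carries only finite second moments. I split $\cyl_q$ along $A_\eps := \{|v| \le \eps^{-1/2}\}$. On $A_\eps$, the homogeneity $\exp_q(\eps,\lambda) = \exp_q(1,\eps\lambda)$ keeps $\eps\lambda$ in a fixed bounded neighborhood of $0 \in T_q^*M$, so the standard integral Taylor remainder satisfies $|R_3(\eps,\lambda)| \le C\eps^3(1+|v|^3)$; combining $|v|^3 \le \eps^{-1/2}|v|^2$ on $A_\eps$ with the $2$-decreasing hypothesis gives $\int_{A_\eps}|R_3|\,d\mu_{\V_q} = O(\eps^{5/2}) = o(\eps^2)$. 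On the tail $A_\eps^c$, the truncated Markov estimate $\mu_{\mathbf{v}_q}(|v| > \eps^{-1/2}) \le \eps\int_{\{|v|>\eps^{-1/2}\}}|v|^2\,d\mu_{\mathbf{v}_q} = o(\eps)$, combined with the uniform bound $|\phi(\exp_q(\eps,\lambda)) - \phi(q)| \le \mathrm{Lip}(\phi)\cdot\eps$ (valid because $2H(\lambda)=1$ forces unit horizontal speed along $\gamma_\lambda$, hence the sub-Riemannian distance from $q$ to $\exp_q(\eps,\lambda)$ is at most $\eps$) and a Cauchy--Schwarz control of the leaked linear and quadratic Taylor terms, shows the tail contributes $o(\eps^2)$ as well. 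Division by $\eps^2$ yields the pointwise limit, while the assumed regularity in $q$ of the frame, the structural functions, and $\mu_{\mathbf{v}_q}$ upgrade this to uniform convergence on compacts. Finally, the independence of $L_{\mu_\V}$ from the adapted measure is manifest in the formula: $\sum_{j=1}^k c_{ji}^j$ is the trace of the endomorphism $Y \mapsto \pi_\distr([Y,X_i])$ of $\distr_q$, where $\pi_\distr$ denotes the projection along $\V$, a quantity built only from $\distr$, $\V$, and $\metr$.
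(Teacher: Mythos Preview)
Your computation of the first- and second-order Taylor terms and their integrals against $\mu_{\V_q}$ is correct and matches the paper's argument essentially line for line; the final formula follows the same way.

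The difference lies in the remainder control. The paper proves a sharper bound (its Lemma on the Taylor expansion along normal geodesics): for $\lambda \in \cyl_q$, the third-order remainder $r_\lambda(t)$ satisfies
\[
|r_\lambda(t)| \le A + B_{\bar\ell}\,|h_{\bar\ell}| + C_{\bar\imath\bar\jmath}\,|h_{\bar\imath}|\,|h_{\bar\jmath}|,
\]
i.e.\ it is at most \emph{quadratic} in the vertical coordinates, not cubic. This comes from tracking the Hamilton equations: since $\dot\gamma = \sum_{i\le k} h_i X_i$ involves only bounded $h_i$, each time-derivative gains at most one unbounded factor, and three derivatives produce at most two such factors. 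With this quadratic bound the $2$-decreasing hypothesis gives $\int_{\cyl_q}|r_\lambda(t)|\,d\mu_q \le D\sum_{\bar\ell}\int h_{\bar\ell}^2\,d\mu_q =: g(q)$, whence the full remainder is $t^3 g(q)$ and, after dividing by $t^2$, contributes $t\,g(q)$. Since $g$ is continuous by the regularity assumption on adapted measures, this is $O(t)$ uniformly on compacts; no splitting is needed.

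Your truncation argument with $A_\eps = \{|v|\le \eps^{-1/2}\}$ does establish the pointwise limit, but there is a gap for uniform convergence on compacts. On the tail $A_\eps^c$ you use $\mu_{\mathbf{v}_q}(|v|>\eps^{-1/2}) \le \eps\int_{\{|v|>\eps^{-1/2}\}}|v|^2\,d\mu_{\mathbf{v}_q} = o(\eps)$; this $o(\eps)$ is valid for each fixed $q$ by dominated convergence, but the stated regularity hypothesis only gives continuity of $q \mapsto \int h_{\bar\ell}^2\,d\mu_q$, not uniform decay of the tail integrals $\int_{\{|v|>R\}}|v|^2\,d\mu_{\mathbf{v}_q}$ over $q$ in a compact set. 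So your upgrade to uniform convergence is not justified as written. The paper's quadratic remainder bound avoids this issue entirely, since it never needs to truncate.
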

Thanks to Theorem~\ref{t-lmu}, in the following we use the notation $L^\V:=L_{\mu_\V}$.
\begin{rmk}
One of the byproducts of Theorem \ref{t-lmu} is the following. Fixing $\V$ is equivalent to assign a subspace of ``horizontal covectors'' $\mathbf{h}_q$ in $T^*_qM$. The expression of $L^\V$ at $q \in M$ is the same averaging only on the horizontal geodesics (i.e. with a measure $\mu_{\V_q} = \mu_{\mathbb{S}^{k-1}_q}\times \delta_{\mathbf{v}_q}$, where $\delta$ is the Dirac delta) or averaging on all possible geodesics with a measure of the type~\eqref{eq:product}. The particular choice $\mu_{\V_q} = \delta_{\mathbf{v}_q}$ recovers the construction of \cite{grong1,grong2,GordLae,GordLaeOlder}), where the authors choose a Riemannian extension and use this to define the space of horizontal covectors.
\end{rmk}
\begin{rmk}
We can rewrite the operator $L^\V$ in a more elegant way by introducing the concept of \emph{horizontal divergence}. As we will make precise in Section~\ref{horizontal-div}, the horizontal divergence of a vector field $X$ computes the infinitesimal change of volume, under the flow of a vector field $X$, of the standard parallelotope of $\distr$. To do this, we need a well defined projection $\pi: TM \to \distr$ that, indeed, requires the choice of a complement $\V$. We denote with $\div^\V(X)$ the horizontal divergence of $X$, and we have
\begin{equation}
L^\V = \div^\V \circ \grad.
\end{equation}
\end{rmk}

\subsection{The equivalence problem}
Once a volume $\omega$ on $M$ is chosen  (hence a macroscopic Laplacian is defined) and a complement $\V$ is fixed (hence a microscopic Laplacian is defined), it is natural to ask:  \\[2mm]
{\bf Q1:} Under which conditions on $\omega$ and $\V$ do we have $\Delta_\omega=L^\V$?\\[2mm]
In other words, we would like to know when a macroscopic Laplacian admits a microscopic interpretation and when a microscopic Laplacian can be written in divergence form (and hence is symmetric) w.r.t. some volume on the manifold. Moreover:\\[2mm]
{\bf Q2:} Given a volume $\omega$, is it possible to find a complement $\V$ such that $\Delta_\omega=L^\V$? If so, is it unique?\\[2mm]
This question is interesting since on any sub-Riemannian structure there is a smooth, intrinsic volume, Popp's one. Then an answer to {\bf Q2} gives a way to assign an intrinsic complement $\V$. A counting argument suggests that such a question has an affirmative answer. In fact, a volume form is given by a non-zero function, while a complement is given by $(n-k)k$ functions. However the answer is more complicated because  some integrability conditions must be taken in account.
A more specific question is the following:\\[2mm]
{\bf Q3:} Let $\V$ a complement, and let $\metr_\V$ a smooth scalar product on $\V$. Then the orthogonal direct sum $\metr \oplus \metr_\V$ is a \emph{Riemannian extension} of the sub-Riemannian structure (also called taming metric). Let $\omega_{\V}$ be the corresponding Riemannian volume. Is it true that for the sub-Riemannian diffusion operator $\Delta_{\omega_{\V}}=L^{ \V}$?\\[2mm]
This last question is even more interesting when it is possible to find an intrinsic Riemannian extension, i.e. some choice of $\V$ and $\metr_\V$ that depends only on the sub-Riemannian structure $(M,\distr,\metr)$. Intrinsic Riemannian extensions can be made in several cases (see for instance~\cite{diniz,Hladky-connection,Hladky-complement}). However, in general they are not known and (even if this is a non-apophantic statement) it is believed that they do not exist.

In Section~\ref{s:equiv} we answer to \textbf{Q1}, with the following theorem.
\begin{theorem}\label{t-general}
For any complement $\V$ and volume $\omega$, the macroscopic operator $\Delta_\omega$ and the microscopic operator $L^\V$ have the same principal symbol and no constant term. Moreover $L^\V = \Delta_\omega$ if and only if
\begin{equation}\label{eq:espressione}
\chi^{(\V,\omega)} := L^\V - \Delta_\omega = \sum_{i=1}^k \sum_{j=k+1}^n c_{ji}^j X_i +\grad( \theta)= 0,
\end{equation}
where $\theta = \log |\omega(X_1,\ldots,X_n)|$ and $c_{ij}^\ell$ are the structural functions associated with an orthonormal frame $X_1,\ldots,X_k$ for $\distr$ and a frame $X_{k+1},\ldots,X_n$ for $\V$.
\end{theorem}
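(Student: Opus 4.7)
The plan is to reduce everything to a direct computation in the local frame $X_1,\ldots,X_n$, starting from two closed-form expressions that are already available:  the formula for $L^\V$ provided by Theorem~\ref{t-lmu}, and the orthonormal-frame expression
\[
\Delta_\omega=\sum_{i=1}^k X_i^{2}+\sum_{i=1}^k \div_\omega(X_i)\,X_i
\]
recalled in the macroscopic section. Inspecting both, I read off that each has principal symbol $\sum_{i=1}^k X_i^2$ and no constant (order zero) term. This gives the first assertion of the theorem immediately, and reduces the equivalence question to comparing the two first-order parts.

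Subtracting the two expressions yields
\[
\chi^{(\V,\omega)}=L^\V-\Delta_\omega=\sum_{i=1}^k\Bigl(\sum_{j=1}^k c_{ji}^{\,j}-\div_\omega(X_i)\Bigr)X_i .
\]
The heart of the argument is therefore a formula for the divergence of the horizontal frame vectors in terms of structural functions and the density of $\omega$ with respect to the frame. Writing $\omega=e^{\theta}\nu_0$ with $\nu_0$ the ``frame'' volume determined by $\nu_0(X_1,\ldots,X_n)=\pm1$ and $\theta=\log|\omega(X_1,\ldots,X_n)|$, one has
\[
\div_\omega(X_i)=X_i(\theta)+\div_{\nu_0}(X_i).
\]
For $\div_{\nu_0}(X_i)$, I would compute $\mathcal L_{X_i}\nu_0$ using Cartan's formula on the dual coframe $X^{1},\ldots,X^{n}$: since $dX^{j}=-\tfrac12\sum_{a,b}c_{ab}^{\,j}X^{a}\wedge X^{b}$, one gets $\mathcal L_{X_i}X^{j}=-\sum_b c_{ib}^{\,j}X^{b}$, and expanding $\mathcal L_{X_i}(X^{1}\wedge\cdots\wedge X^{n})$ only the diagonal terms survive, giving
\[
\div_{\nu_0}(X_i)=-\sum_{j=1}^n c_{ij}^{\,j}=\sum_{j=1}^{n}c_{ji}^{\,j},
\]
where the last equality uses antisymmetry of $c_{ab}^{\,\ell}$ in its lower indices.

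Substituting this back into the expression for $\chi^{(\V,\omega)}$ produces a cancellation of the horizontal block $\sum_{j=1}^{k}c_{ji}^{\,j}$, leaving precisely a sum over $j\in\{k+1,\ldots,n\}$ plus the horizontal gradient of $\theta$, which is the content of \eqref{eq:espressione} (possibly up to an overall sign, which just corresponds to whether one writes $\chi^{(\V,\omega)}=L^\V-\Delta_\omega$ or the opposite). The ``if and only if'' part is then tautological, since a first-order differential operator of the form $\sum_{i=1}^{k}a_i X_i$ vanishes as an operator on $C^\infty(M)$ if and only if all coefficients $a_i$ vanish, because $X_1,\ldots,X_k$ are pointwise linearly independent. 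The only possible subtlety is verifying that the final expression for $\chi^{(\V,\omega)}$ is independent of the choice of frames $X_1,\ldots,X_n$ adapted to $\distr\oplus\V$; this follows because it is defined as a difference of two intrinsic operators, so no further check is required.
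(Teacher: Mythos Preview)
Your proposal is correct and follows essentially the same approach as the paper: the paper's proof (given for the restated version, Theorem~\ref{t:compatibility}) simply says ``compare Eq.~\eqref{eq:macroformula} with Eq.~\eqref{eq:microformula}'' and subtracts, exactly as you do; the divergence identity $\div_\omega(X_i)=\sum_{\alpha=1}^n c_{\alpha i}^{\alpha}+X_i(\theta)$ that you derive via Cartan's formula is stated in the paper as a preliminary lemma in Section~\ref{s:macro}. Your remark about the sign is spot on: the paper itself defines $\chi^{(\V,\omega)}=L^\V-\Delta_\omega$ in the introduction but $\chi^{(\V,\omega)}=\Delta_\omega-L^\V$ in Section~\ref{s:equiv}, and the displayed expression matches the latter convention.
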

The condition $\chi^{(\V,\omega)} = 0$ is a coordinate-free version of \cite[Th. 5.13]{GordLae}, which appeared online while this paper was under redaction. In \cite{grong1} the same condition is obtained when $\V$ is the orthogonal complement w.r.t. some Riemannian extension and $\omega$ is the corresponding Riemannian volume. The particularly simple form of Eq.~\ref{eq:espressione} in our frame formalism permits us to go further in the study of its solutions.

We address {\bf Q2} and {\bf Q3} for Carnot groups, contact and quasi-contact structures and, more generally, corank $1$ structures. Here we collect only the main results. For precise definition see the corresponding sections.

\subsubsection{Contact structures}

\begin{theorem}
Let $(M,\distr,\metr)$ be a contact sub-Riemannian structure. For any volume $\omega$ there exists a unique complement $\V$ such that $L^\V = \Delta_\omega$. In this case $\V = \spn\{X_0\}$, with
\begin{equation}
X_0 = \Z - J^{-1}\grad(\theta), \qquad \theta = \log| \omega(X_1,\ldots,X_k,Z)|,
\end{equation}
where $\Z$ is the Reeb vector field and $J:\distr \to \distr$ is the contact endomorphism.
\end{theorem}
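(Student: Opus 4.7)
The plan is to apply Theorem~\ref{t-general}, which reduces the claim to solving the pointwise condition $\chi^{(\V,\omega)} = 0$, and to parametrize the rank-one complement in a way that decouples the dependence on $\V$ from the dependence on $\omega$. Since $n-k = 1$ and the Reeb field $Z$ is nowhere tangent to $\distr$, every complement $\V$ admits a unique section of the form $X_0 = Z + Y$ with $Y \in \Gamma(\distr)$, obtained by normalizing any local frame vector of $\V$ so that its $Z$-component equals one. Conversely, every horizontal section $Y$ produces a complement $\V_Y := \spn\{Z + Y\}$. Thus the theorem reduces to showing that there is a unique $Y \in \Gamma(\distr)$ for which $\chi^{(\V_Y,\omega)} = 0$.

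A useful initial observation is that the function $\theta$ appearing in Theorem~\ref{t-general} does not depend on $Y$: since $Y \in \distr = \spn\{X_1,\ldots,X_k\}$, antisymmetry of $\omega$ gives
\begin{equation}
\omega(X_1,\ldots,X_k,X_0) = \omega(X_1,\ldots,X_k,Z+Y) = \omega(X_1,\ldots,X_k,Z),
\end{equation}
so $\theta = \log|\omega(X_1,\ldots,X_k,Z)|$ and $\grad(\theta)$ are fixed by $\omega$ and the sub-Riemannian structure alone.

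The key computation is to express $\sum_{i=1}^k c_{0i}^0 X_i$ in the frame $X_0,X_1,\ldots,X_k$ in terms of $J$ and $Y$. Writing a generic vector $V$ as $aZ + W$ with $W\in\distr$ and then expanding in the new frame shows that $c_{0i}^0$ is precisely the $Z$-component of $[X_0,X_i]$ in the original frame. Splitting $[X_0,X_i] = [Z,X_i] + [Y,X_i]$, the Reeb identities $\alpha(Z)=1,\iota_Z d\alpha=0$ imply $[Z,X_i] \in \distr$, so this bracket contributes nothing to the $Z$-component. Expanding $[Y,X_i] = \sum_j(y_j[X_j,X_i] - X_i(y_j)X_j)$ and using $\alpha([X_j,X_i]) = -d\alpha(X_j,X_i) = -\metr(X_j,JX_i)$ together with the skew-adjointness of $J$, the $Z$-component of $[X_0,X_i]$ equals $\metr(JY,X_i)$. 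Since $X_1,\ldots,X_k$ is orthonormal, summing gives the clean identity
\begin{equation}
\sum_{i=1}^k c_{0i}^0 X_i = JY.
\end{equation}

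Plugging into Theorem~\ref{t-general}, the condition $\chi^{(\V,\omega)}=0$ becomes the pointwise linear equation $JY + \grad(\theta) = 0$ on $\distr$. On a contact structure $d\alpha|_\distr$ is non-degenerate, hence $J\colon\distr\to\distr$ is a pointwise isomorphism, and this equation has the unique smooth solution $Y = -J^{-1}\grad(\theta)$. This proves simultaneously existence and uniqueness of the complement and yields the stated formula $X_0 = Z - J^{-1}\grad(\theta)$. The main bookkeeping obstacle is to carefully track the sign and normalization conventions relating $\alpha$, $Z$, and $J$, and to transfer the structure constants from the reference frame $X_1,\ldots,X_k,Z$ (in which the Reeb properties are clean) to the complement frame $X_0,X_1,\ldots,X_k$ (in which $c_{0i}^0$ is defined); once these are settled, the proof is essentially a one-line linear algebra argument on $\distr_q$.
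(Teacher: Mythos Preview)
Your proof is correct and follows essentially the same route as the paper: parametrize the complement as $X_0 = Z + Y$ with $Y\in\Gamma(\distr)$, observe that $\theta$ is independent of $Y$, reduce the compatibility condition $\chi^{(\V,\omega)}=0$ to the linear equation $JY+\grad(\theta)=0$, and invoke non-degeneracy of $J$. The only cosmetic difference is that the paper packages the computation of $c_{0i}^0$ via Cartan's formula as $c_{0i}^0=\eta([X_0,X_i])=-d\eta(X_0,X_i)$ (stated once for all corank~$1$ structures), whereas you obtain the same quantity by explicitly tracking the $Z$-component of the bracket; both yield $\sum_i c_{0i}^0 X_i = JY$ once $Z$ is taken to be Reeb.
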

Contact structures have a natural Riemannian extension, obtained by declaring the Reeb vector field a unit vector orthonormal to $\distr$. It turns out that the Riemannian volume of this extension is Popp's volume.
\begin{cor}
Let $\popp$ be the Popp's volume. The unique complement $\V$ such that $L^{\V} = \Delta_{\popp}$ is generated by the Reeb vector field. Moreover, $\popp$ is the unique volume (up to constant rescaling) with this property.
\end{cor}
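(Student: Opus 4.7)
The plan is to derive both claims from the uniqueness theorem just stated, combined with the natural Riemannian extension of a contact structure and a rigidity argument powered by H\"ormander's condition.

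For the existence part, I would apply the previous theorem with $\omega=\popp$ and show that the correction term $J^{-1}\grad(\theta)$ vanishes identically. The key observation, recalled just above the corollary, is that Popp's volume on a contact manifold coincides with the Riemannian volume of the canonical extension in which $\Z$ is declared a unit vector orthogonal to $\distr$. Relative to this extension, any local orthonormal frame $X_1,\ldots,X_k$ of $\distr$ together with $\Z$ is an oriented orthonormal frame of $TM$, so $\popp(X_1,\ldots,X_k,\Z)=1$. Therefore $\theta=\log|\popp(X_1,\ldots,X_k,\Z)|$ vanishes identically, giving $\grad(\theta)=0$, and the previous theorem then forces $X_0=\Z$.

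For the uniqueness of the volume, I would start from an arbitrary $\omega$ with $L^{\spn\{\Z\}}=\Delta_\omega$ and invoke the uniqueness clause of the previous theorem: the unique complement attached to $\omega$ must be $\spn\{\Z - J^{-1}\grad(\theta_\omega)\}$, where $\theta_\omega=\log|\omega(X_1,\ldots,X_k,\Z)|$. Since this complement has to coincide with $\spn\{\Z\}$, and since $J^{-1}\grad(\theta_\omega)\in\distr$ whereas $\Z$ is transverse to $\distr$, we must have $J^{-1}\grad(\theta_\omega)=0$. The contact endomorphism $J$ is invertible on $\distr$ (it is built from the non-degenerate form $d\alpha|_\distr$), so $\grad(\theta_\omega)=0$, i.e., $X_i(\theta_\omega)=0$ for every $i=1,\ldots,k$.

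The final and (to my mind) only delicate step is upgrading horizontal constancy to genuine constancy of $\theta_\omega$. H\"ormander's condition tells us that iterated Lie brackets of $X_1,\ldots,X_k$ span $TM$ pointwise; combined with the identity $[Y,W](\theta_\omega) = Y(W(\theta_\omega)) - W(Y(\theta_\omega))$ and an induction on bracket depth, this yields $Y(\theta_\omega)=0$ for every $Y\in\Gamma(TM)$. Connectedness of $M$ then gives $\theta_\omega$ constant, and writing $\omega=f\popp$ with $\theta_\omega=\log|f|$ concludes that $\omega=c\,\popp$ for a nonzero constant $c$.
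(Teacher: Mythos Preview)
Your proof is correct. The first part (existence) matches the paper exactly: both use that $\popp(X_1,\ldots,X_k,\Z)$ is constant (since Popp's volume is the Riemannian volume of the natural extension), hence $\grad(\theta)=0$ and $X_0=\Z$.

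For the uniqueness part, your route differs slightly from the paper's. The paper simply invokes Lemma~\ref{l:uniqueness}, the general statement that if $L^\V=\Delta_\omega$ and $L^\V=\Delta_{\omega'}$ then $\omega'=c\,\omega$, whose proof already contains the ``$\grad(g)=0$ plus H\"ormander implies $g$ constant'' argument. You instead re-derive this in the contact setting by going through the explicit formula $X_0=\Z-J^{-1}\grad(\theta_\omega)$ and the invertibility of $J$. This is perfectly valid but a bit roundabout: the detour through $J^{-1}$ and the direct-sum argument for $\spn\{\Z-v\}=\spn\{\Z\}$ is unnecessary once Lemma~\ref{l:uniqueness} is available, since that lemma requires no contact structure at all. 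On the other hand, your argument is self-contained and makes transparent exactly where non-degeneracy of $J$ enters.
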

\begin{rmk}
In the results above we always use the normalization $\|J\| = 1$ (this fixes the contact form up to a sign). We stress that if we choose a different normalization the Reeb field would be different.
\end{rmk}
In Section~\ref{s:integrability} we also discuss the inverse problem, namely for a fixed $\V$, find a volume $\omega$ such that $L^\V = \Delta_\omega$. This is a more complicated problem (and in general has no solution). In the contact case, thanks to the non-degeneracy of $J$, we find explicitly a necessary and sufficient condition.
\begin{prop}
Let $\V = \spn\{X_0\}$. Define the one-form $\alpha := \frac{i_{X_0} d\eta}{\eta(X_0)}$ and the function $g=\frac{d\alpha \wedge\eta \wedge (d\eta)^{d-1}}{\eta\wedge (d\eta)^d }$. Then there exists a volume $\omega$ such that $L^{\V} = \Delta_\omega$ if and only if
\begin{equation}
d\alpha - dg \wedge \eta - g d\eta = 0.
\end{equation}
In this case, $\omega$ is unique up to constant rescaling.
\end{prop}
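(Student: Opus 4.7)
Apply Theorem~\ref{t-general} with $\V = \spn\{X_0\}$. Since only $j = n$ survives in the inner sum, the identity $L^\V = \Delta_\omega$ is equivalent to prescribing the horizontal derivatives of $\theta = \log|\omega(X_1,\dots,X_k,X_0)|$, namely $X_i(\theta) = -c_{0i}^0$ for $i=1,\dots,k$. The plan is to recast this as a closedness condition for a single one-form on $M$, extract from it an algebraic equation that pins down $g$, and read off the residual 2-form equation as the compatibility condition.

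A direct application of Cartan's formula gives $d\eta(X_0,X_i) = -X_i(\eta(X_0)) - c_{0i}^0\,\eta(X_0)$, so that $\alpha(X_i) = -X_i\!\log|\eta(X_0)| - c_{0i}^0$. The gradient system therefore reads $(d\theta - \alpha - d\log|\eta(X_0)|)|_\distr = 0$, which, since $\ker\eta = \distr$, is equivalent to the existence of a function $g$ with
\begin{equation*}
d\theta = \alpha + d\log|\eta(X_0)| - g\,\eta.
\end{equation*}
By Poincar\'e's lemma, a local primitive $\theta$ exists if and only if the right-hand side is closed, i.e.\
\begin{equation*}
d\alpha - dg\wedge\eta - g\,d\eta = 0.
\end{equation*}
To solve for $g$, wedge this 2-form equation with $\eta\wedge(d\eta)^{d-1}$. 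The term $dg\wedge\eta\wedge\eta\wedge(d\eta)^{d-1}$ is killed by $\eta\wedge\eta=0$, and the identity $d\eta\wedge\eta = \eta\wedge d\eta$ turns the last term into $-g\,\eta\wedge(d\eta)^d$. Contact nondegeneracy ensures $\eta\wedge(d\eta)^d$ is nowhere vanishing, so $g$ must equal the ratio displayed in the statement; plugging this specific $g$ back into the 2-form equation produces exactly the integrability condition asserted, which is therefore necessary and sufficient for the existence of $\omega$.

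For uniqueness up to a multiplicative constant, if $\omega'$ also satisfies $L^\V = \Delta_{\omega'}$ then $f := \theta-\theta'$ has $X_i(f) = 0$ for every $i=1,\dots,k$; the identity $[Y,Z](f) = YZ(f) - ZY(f)$ used inductively propagates the vanishing to every iterated Lie bracket of the $X_i$'s, and the H\"ormander condition then yields $df = 0$, so $\omega' = c\,\omega$. The main technical insight is spotting that $\eta\wedge(d\eta)^{d-1}$ is the wedge-factor that algebraically decouples $g$ from its derivatives; after this observation both the formula for $g$ and the compatibility condition drop out by inspection, and the rest of the work is elementary bookkeeping about the dependence on the chosen normalization of $\eta$.
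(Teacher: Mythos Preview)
Your proof is correct and follows essentially the same route as the paper's: reduce the compatibility condition to a horizontal exactness problem for a one-form, rewrite this as the existence of a function $g$ making $\alpha - g\eta$ (up to an exact term) closed, then wedge with $\eta\wedge(d\eta)^{d-1}$ to pin down $g$ algebraically and read off the residual integrability condition. The only cosmetic difference is that the paper normalizes $\eta(X_0)=1$ at the outset, whereas you keep $\eta(X_0)$ arbitrary and absorb it into the exact term $d\log|\eta(X_0)|$; since this term disappears upon taking $d$, the closedness condition is unaffected and your treatment is slightly more general. Your direct uniqueness argument via H\"ormander is exactly the content of the paper's Lemma~\ref{l:uniqueness}, which the paper simply cites.
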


\subsubsection{Carnot groups}

By definition, Carnot groups are particular cases of left-invariant sub-Riemannian structures (see Definition~\ref{d:left-invariant}). Then it is natural to choose a left-invariant volume i.e. a volume proportional to the Haar's one (for example, Popp's volume $\popp$) and left-invariant complements $\V$. Contrary to the contact case (where we always have existence and uniqueness of $\V$ for any fixed $\omega$) here we lose uniqueness.
\begin{prop}
For any Carnot group $G$, we have $\Delta_{\popp} = L^{\V_0}$, where $\V_0$ is the left invariant complement
\begin{equation}
\V_0 := \mathfrak{g}_2 \oplus \dots\oplus \mathfrak{g}_m,
\end{equation}
where $\mathfrak{g}_i$ denotes the $i$-th layer of the Carnot group.
\end{prop}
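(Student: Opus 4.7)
The plan is to verify the criterion of Theorem~\ref{t-general}, namely that $\chi^{(\V_0,\popp)}=0$, by exploiting left-invariance of both $\popp$ and $\V_0$ together with the stratified structure of the Lie algebra.

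First I would fix a left-invariant orthonormal frame $X_1,\ldots,X_k$ for $\distr=\mathfrak{g}_1$ and extend it to a left-invariant frame $X_1,\ldots,X_n$ adapted to the stratification, so that $X_{k+1},\ldots,X_n$ spans $\V_0=\mathfrak{g}_2\oplus\cdots\oplus\mathfrak{g}_m$ and, more precisely, each $X_\ell$ lies in a single layer $\mathfrak{g}_{s(\ell)}$. With this choice the structural functions $c_{ij}^\ell$ are \emph{constants on} $G$ (they are just the structure constants of $\mathfrak{g}$), which already simplifies Eq.~\eqref{eq:espressione}.

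Next I would show that $\grad(\theta)=0$. Since Popp's volume on a Carnot group is left-invariant (it coincides with the Haar volume up to a constant; see \cite{nostropopp}) and the frame above is also left-invariant, the function $\theta=\log|\popp(X_1,\ldots,X_n)|$ is constant on $G$. Hence $\grad(\theta)=0$ and the criterion of Theorem~\ref{t-general} reduces to the algebraic identity
\begin{equation}
\sum_{j=k+1}^n c_{ji}^{\,j}=0, \qquad i=1,\ldots,k.
\end{equation}

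Finally I would verify this identity using the grading. For $i\le k$ and $j>k$ with $X_j\in\mathfrak{g}_{s}$, $s\ge 2$, the bracket relation $[\mathfrak{g}_1,\mathfrak{g}_s]\subseteq\mathfrak{g}_{s+1}$ forces
\begin{equation}
[X_j,X_i]=-[X_i,X_j]\in\mathfrak{g}_{s+1},
\end{equation}
so the expansion of $[X_j,X_i]$ in the adapted basis only involves frame elements lying in $\mathfrak{g}_{s+1}$. Since $X_j\in\mathfrak{g}_s\neq\mathfrak{g}_{s+1}$, the coefficient $c_{ji}^{\,j}$ vanishes for every such $j$, proving the identity and hence the proposition.

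The only nontrivial input here is the left-invariance of $\popp$, which kills the gradient term; the main obstacle, if any, is simply pinning down this fact with a precise reference. Once that is granted, the rest is a one-line consequence of the grading.
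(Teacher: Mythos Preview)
Your proof is correct and follows essentially the same strategy as the paper: use left-invariance of $\popp$ and of the frame to kill the $\grad(\theta)$ term, then verify the structural-constant condition using the stratification. The paper packages the second step slightly differently: it rewrites the compatibility condition as $\tr(\pi_{\V_0}\circ\ad_{X_i})=0$, observes that $\pi_{\V_0}\circ\ad_{X_i}\circ\pi_{\V_0}=\ad_{X_i}|_{\V_0}$ by the grading, and concludes since $\ad_{X_i}$ is nilpotent (hence has zero trace). Your argument is in fact a bit more, since you show each diagonal entry $c_{ji}^{\,j}$ vanishes individually rather than just their sum; this requires the adapted basis you chose, whereas the paper's nilpotency argument is basis-free on $\V_0$.
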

Any left-invariant complement is the graph of a linear map $\ell: \V_0 \to \distr$, that is $\V_\ell := \{X+ \ell( X)\mid X \in \V_0\}$.
\begin{prop}
$\Delta_\popp = L^\V$ if and only if $\V = \V_\ell$, with
\begin{equation}
\tr(\ell \circ \ad_{X_i}) = 0, \qquad \forall i=1,\ldots,k.
\end{equation}
\end{prop}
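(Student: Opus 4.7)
\medskip\noindent\textbf{Proof proposal.} The strategy is to apply Theorem~\ref{t-general} to the left-invariant frame adapted to the decomposition $TG = \distr \oplus \V_\ell$, and then unpack the expression $\sum_{j=k+1}^{n}c_{ji}^{j}$ via the Carnot stratification.

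First I would fix a left-invariant orthonormal frame $X_1,\dots,X_k$ of $\distr=\mathfrak{g}_1$ and a left-invariant basis $X_{k+1},\dots,X_n$ of $\V_0=\mathfrak{g}_2\oplus\cdots\oplus\mathfrak{g}_m$, and use the adapted frame $Y_i=X_i$ for $i\le k$ and $Y_j=X_j+\ell(X_j)$ for $j\ge k+1$. Since Popp's volume is left-invariant and $\popp(Y_1,\dots,Y_n)=\popp(X_1,\dots,X_n)$ (horizontal corrections $\ell(X_j)\in\distr$ leave the determinant unchanged by multilinearity and alternation), the function $\theta=\log|\popp(Y_1,\dots,Y_n)|$ is constant, so $\grad(\theta)=0$. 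By Theorem~\ref{t-general}, the equality $\Delta_{\popp}=L^{\V_\ell}$ is therefore equivalent to
\begin{equation}
\sum_{j=k+1}^{n}\tilde{c}_{ji}^{\,j}=0,\qquad i=1,\dots,k,
\end{equation}
where $\tilde{c}_{ab}^{c}$ are the structural functions of the frame $\{Y_a\}$.

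Next I would compute $[Y_j,Y_i]=[X_j,X_i]+[\ell(X_j),X_i]$. The Carnot grading yields $[X_j,X_i]\in\V_0$ (since $X_j\in\mathfrak{g}_{\ge 2}$ and $X_i\in\mathfrak{g}_1$) and $[\ell(X_j),X_i]\in\mathfrak{g}_2\subset\V_0$, so both summands lie in $\V_0$. Writing $X_b=Y_b-\ell(X_b)$ and noting that $\ell(X_b)\in\distr$ does not contribute to the $Y_b$-coefficient for $b\ge k+1$, I get
\begin{equation}
\tilde{c}_{ji}^{\,j}=c_{ji}^{\,j}+d_{ji}^{\,j},\qquad\text{where }[\ell(X_j),X_i]=\sum_{b=k+1}^{n}d_{ji}^{b}X_b.
\end{equation}
The previous proposition ($\Delta_{\popp}=L^{\V_0}$) applied through Theorem~\ref{t-general} in the frame $\{X_a\}$ gives $\sum_{j=k+1}^{n}c_{ji}^{\,j}=0$, so the condition reduces to $\sum_{j=k+1}^{n}d_{ji}^{\,j}=0$ for every $i$.

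Finally I would reinterpret this trace-like sum: since $d_{ji}^{\,j}$ is the $X_j$-component of $[\ell(X_j),X_i]=-(\ad_{X_i}\!\circ\ell)(X_j)$, one has $\sum_{j=k+1}^{n}d_{ji}^{\,j}=-\tr\bigl(\ad_{X_i}\!\circ\ell\bigr)\bigl|_{\V_0}$. Using that $\ad_{X_i}\colon\distr\to\mathfrak{g}_2\subset\V_0$ and $\ell\colon\V_0\to\distr$, the cyclic property of the trace gives $\tr(\ad_{X_i}\!\circ\ell)|_{\V_0}=\tr(\ell\circ\ad_{X_i})|_{\distr}$, so the condition becomes exactly $\tr(\ell\circ\ad_{X_i})=0$. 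The ``only if'' direction is automatic because every left-invariant complement is of the form $\V_\ell$ for a unique $\ell$, and the computation above is an equivalence at each step. The main bookkeeping obstacle is keeping track of the change-of-frame corrections when expressing $[Y_j,Y_i]$ in the basis $\{Y_a\}$; the Carnot grading, which forces the brackets into $\V_0$ and the $\ell$-terms into $\distr$, is what makes these corrections drop out of the relevant coefficients.
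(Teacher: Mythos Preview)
Your proof is correct and proceeds along the same lines as the paper's, but at the level of adapted frames and structural constants rather than projections. The paper first rewrites the compatibility condition for Carnot groups as $\tr(\pi_{\V}\circ\ad_{X_i})=0$, then uses $\pi_\ell=\pi_0+\ell\circ\pi_0$, the idempotence of $\pi_\ell$, the cyclic property of the trace, and the fact that the image of $\ad_{X_i}$ lies in $\V_0$ to obtain
\[
0=\tr(\pi_\ell\circ\ad_{X_i})=\tr(\pi_0\circ\ad_{X_i})+\tr(\ell\circ\pi_0\circ\ad_{X_i})=\tr(\ell\circ\ad_{X_i}),
\]
using that $\tr(\pi_0\circ\ad_{X_i})=0$ because $\V_0$ is already compatible. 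Your computation of $\tilde c_{ji}^{\,j}=c_{ji}^{\,j}+d_{ji}^{\,j}$ and the subsequent trace identification is exactly this argument unpacked in coordinates; what you call ``the Carnot grading forces the brackets into $\V_0$'' is precisely the observation that $\mathrm{im}\,\ad_{X_i}\subset\V_0$, which the paper uses to drop $\pi_0$. The projection formulation is shorter and avoids the change-of-frame bookkeeping you flagged, but your frame version has the virtue of making explicit why the horizontal corrections $\ell(X_b)$ do not affect the relevant diagonal coefficients.
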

The equation above has non-unique solutions $\ell$ in many cases, as we show in Section~\ref{s:carnot}.

\subsubsection{Quasi-contact structures}

For all the relevant definitions, we refer to Section~\ref{s:quasicontact}. We only stress that, analogously to the contact case, one can define a quasi-contact endomorphism $J:\distr \to \distr$ that, in the quasi-contact case, is degenerate (and its kernel is usually assumed to have dimension $1$). In this case, if for some $\omega$ there is $\V$ such that $L^\V = \Delta_\omega$, then $\V$ is \emph{never} unique. In particular, we have the following.
\begin{theorem}
For any fixed $\omega$, the space of $\V$ such that $L^\V = \Delta_\omega$ is an affine space over $\ker J$.
\end{theorem}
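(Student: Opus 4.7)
The plan is to combine the explicit formula for the obstruction $\chi^{(\V,\omega)} := L^\V - \Delta_\omega$ given by Theorem~\ref{t-general} with the corank-$1$ nature of quasi-contact structures, and to identify the variation of $\chi^{(\V,\omega)}$ under a change of complement with the quasi-contact endomorphism $J$.

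First, I would set up a frame computation adapted to the corank-$1$ situation. Since $\distr$ has codimension $1$, every complement $\V$ is a line field. Fix a reference complement $\V$ spanned by a local vector field $X_n$ transverse to $\distr$, and parametrise any other complement as $\V' = \spn\{X_n'\}$ with $X_n' = X_n + Y$ for some $Y = \sum_{j=1}^{k} a_j X_j \in \Gamma(\distr)$, where $X_1,\ldots,X_k$ is a local orthonormal frame of $\distr$. Specialising Theorem~\ref{t-general} to $n-k=1$ gives
\[
\chi^{(\V,\omega)} = \sum_{i=1}^{k} c_{ni}^{n}\, X_i + \grad(\theta), \qquad \theta = \log|\omega(X_1,\ldots,X_k,X_n)|.
\]
Since $Y$ lies in the $\omega$-linear span of $X_1,\ldots,X_k$, the determinant $\omega(X_1,\ldots,X_k,Y)$ vanishes, so $\theta$ is unchanged by the substitution $X_n \mapsto X_n'$; consequently the entire variation of $\chi^{(\V,\omega)}$ must come from the structural coefficients $c_{ni}^{n}$.

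Next, I would carry out the core bracket computation. Decomposing
\[
[X_n', X_i] = [X_n, X_i] + [Y, X_i]
\]
in the new direct sum $\distr \oplus \V'$ via $X_n = X_n' - Y$, the $X_n'$-component of $[X_n,X_i]$ remains $c_{ni}^{n}$, while the $X_n'$-component of $[Y, X_i]$ equals its $X_n$-component in the original frame, namely $\eta([Y,X_i])$, where $\eta$ is the defining $1$-form of $\distr$ normalised by $\eta(X_n)=1$. Since $\eta(Y) = \eta(X_i) = 0$, one has $\eta([Y,X_i]) = -d\eta(Y,X_i)$, and using the defining identity $\metr(JZ,W) = d\eta(Z,W)$ of the quasi-contact endomorphism yields
\[
(c')_{ni}^{n} = c_{ni}^{n} - \metr(JY, X_i).
\]
Summing against the orthonormal frame gives $\sum_{i=1}^{k} (c')_{ni}^{n}\, X_i = \sum_{i=1}^{k} c_{ni}^{n}\, X_i - JY$, hence the clean variation formula $\chi^{(\V',\omega)} - \chi^{(\V,\omega)} = -JY$.

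The conclusion is then immediate: given any complement $\V$ with $L^\V = \Delta_\omega$, the translated complement $\V' = \spn\{X_n + Y\}$ is again a solution if and only if $JY = 0$, i.e.\ if and only if $Y \in \Gamma(\ker J)$, which exhibits the solution set as an affine space modelled on $\Gamma(\ker J)$. The main obstacle is the bookkeeping in the bracket step: one must carefully recompute the structural coefficients in the frame adapted to $\V'$ and recognise the resulting correction as the intrinsic vector $JY$ via the identity $\metr(J\cdot,\cdot) = d\eta$. Once this identification is in place, the affine structure follows with no further computation.
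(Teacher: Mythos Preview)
Your proposal is correct and follows essentially the same approach as the paper. The paper first packages the Cartan-formula computation into Lemma~\ref{l:compatibility-corank1}, rewriting the compatibility condition as $\sum_i d\eta(X_0,X_i)X_i = \grad(\theta)$, and then compares two normalized generators $X_0,X_0'$ directly; you instead compute the variation of the structural coefficient $c_{ni}^n$ from Theorem~\ref{t-general} by hand, arriving at the same variation formula $\chi^{(\V',\omega)}-\chi^{(\V,\omega)} = \pm JY$ (the sign depends on whether one defines $J$ by $\metr(X,JY)=d\eta(X,Y)$ as the paper does or by $\metr(JZ,W)=d\eta(Z,W)$ as you do, but this is immaterial for the conclusion).
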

Surprisingly, we not only lose uniqueness but also existence. Consider the quasi-contact structure on $M= \R^4$, with coordinates $(x,y,z,w)$ defined by $\distr = \ker \eta$ with
\begin{equation}
\eta = \frac{g}{\sqrt{2}}\left(dw-\frac{y}{2}dx+\frac{x}{2}dy\right), \qquad \text{with} \qquad g = e^z.
\end{equation}
($g$ can be any strictly monotone, positive function). The metric is defined by the global orthonormal frame:
\begin{equation}
X = \frac{1}{\sqrt{g}}\left( \partial_x + \frac{1}{2}y \partial_w \right), \qquad Y = \frac{1}{\sqrt{g}}\left(\partial_y - \frac{1}{2} x \partial_w\right), \qquad \Z = \frac{1}{\sqrt{g}} \partial_z .
\end{equation}
Choose $\omega=\popp$, the Popp's volume, that is 
\begin{equation}
\popp = \tfrac{g^{5/2}}{\sqrt{2}}dx\wedge dy \wedge dz \wedge dw.
\end{equation}
\begin{prop}
For the structure above $L^\V \neq \Delta_\popp$ for any choice of the complement $\V$.
\end{prop}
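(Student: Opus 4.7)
The plan is to apply Theorem~\ref{t-general} and show that, for this example, the obstruction $\chi^{(\V,\popp)}$ has an unremovable component in the $\Z$-direction. Since $\V$ has rank $1$ and is transverse to $\distr = \spn\{X, Y, \Z\}$, I parametrize complements by writing $\V = \spn\{X_4\}$ with
\[
X_4 = \partial_w + \alpha X + \beta Y + \gamma \Z, \qquad \alpha,\beta,\gamma \in C^\infty(\R^4),
\]
where $\partial_w$ is a convenient transverse reference vector and the horizontal correction records the freedom in the choice of $\V$.

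First I compute $\theta = \log|\popp(X,Y,\Z,X_4)|$. The horizontal correction is annihilated in the determinant defining $\popp(X,Y,\Z,X_4)$ because it lies in the span of the first three entries, so $\theta$ is independent of the complement. Evaluating the determinant in coordinates gives $\popp(X,Y,\Z,\partial_w) = g/\sqrt{2}$, hence $\theta = z - \tfrac{1}{2}\log 2$, and therefore $\grad(\theta) = g^{-1/2}\,\Z$.

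Next I compute the structural coefficients $c^4_{4i}$ for $i=1,2,3$. The only horizontal brackets needed are
\[
[X,Y] = -e^{-z}\partial_w, \qquad [X,\Z] = \tfrac{1}{2}g^{-1/2}X, \qquad [Y,\Z] = \tfrac{1}{2}g^{-1/2}Y,
\]
together with $[\partial_w, \cdot]=0$ on the frame (no frame coefficient depends on $w$). Expanding $[X_4, X_i]$ via the Leibniz rule, only $[X,Y]$ contributes a $\partial_w$-component, and using $\partial_w = X_4 - \alpha X - \beta Y - \gamma \Z$ to extract the $X_4$-coefficient, I obtain
\[
c^4_{41} = \beta/g, \qquad c^4_{42} = -\alpha/g, \qquad c^4_{43} = 0.
\]
The last identity, which holds for \emph{every} complement $\V$, is the heart of the argument: since each of $[\partial_w,\Z]$, $[X,\Z]$, $[Y,\Z]$, $[\Z,\Z]$ lies inside $\distr$, the bracket $[X_4,\Z]$ can never acquire a transverse component.

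Substituting into Theorem~\ref{t-general}, the obstruction becomes
\[
\chi^{(\V,\popp)} = \frac{\beta}{g}X - \frac{\alpha}{g}Y + g^{-1/2}\Z,
\]
whose $\Z$-component is the nowhere vanishing function $g^{-1/2}=e^{-z/2}$, independent of $(\alpha,\beta,\gamma)$. Hence $\chi^{(\V,\popp)}$ cannot vanish for any $\V$. The only nontrivial point is the $\V$-independent vanishing of $c^4_{43}$, which conceptually reflects that $\Z$ spans the kernel of the quasi-contact endomorphism in this example; meanwhile the strict monotonicity of $g$ along $\Z$ produces the irreducible $\Z$-component in $\grad(\theta)$ that no choice of complement can cancel.
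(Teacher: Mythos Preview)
Your computation is correct, and the conclusion is valid: the $Z$-component of $\chi^{(\V,\popp)}$ equals $g^{-1/2}$ independently of $\alpha,\beta,\gamma$, so no complement is compatible with Popp's volume.

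The paper's argument reaches the same conclusion by a slightly different route that avoids most of the bracket computations. Instead of normalizing the generator of $\V$ by fixing the $\partial_w$-coefficient to $1$, the paper normalizes via $\eta(X_0)=1$. With that choice, the explicit Popp formula for corank-$1$ structures gives $\popp(X,Y,\Z,X_0)=1/\|J\|=1$, so $\theta$ is \emph{constant} and $\grad(\theta)=0$. The compatibility condition from Lemma~\ref{l:compatibility-corank1} then reduces to $d\eta(X_0,X_i)=0$ for $i=1,2,3$; combined with $\eta(X_0)=1\neq 0$ this forces $X_0\in\ker d\eta$. But the paper computes $\det A=g^2\dot g^2/4>0$, so $d\eta$ is non-degenerate on all of $TM$ and no such $X_0$ exists. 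Your approach trades this linear-algebra punchline for explicit bracket calculations; the paper's normalization buys a cleaner obstruction (non-degeneracy of a single $2$-form) at the cost of invoking the corank-$1$ lemma and the Popp formula. Both arguments are short and ultimately encode the same fact, seen through different normalizations of the transverse generator.
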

Even though this is out of the scope of the present paper, it turns out that this is the typical (generic) picture in the quasi-contact case. This result is quite surprising because, on sub-Riemannian structures with dimension smaller or equal then $4$, there is a unique N-intrinsic volume up to scaling, and a unique N-intrinsic Laplacian given by $\Delta_\popp$. In our example, this unique N-intrinsic Laplacian has \emph{no} compatible complement or, in other words, the macroscopic diffusion operator has no microscopic counterpart. 

On a quasi-contact manifold one can build an analogue of the Reeb vector field (actually, a one-parameter family parametrized by the distinct eigenvalues of $J$), that provides a standard Riemannian extension (see Section~\ref{s:quasireeb}). It turns out that the Riemannian volume of these extensions is once again Popp's volume. Thus, the above non-existence result provides also an answer to {\bf Q3}: the macroscopic operator $\Delta_\omega$ provided by the quasi-Reeb Riemannian extension is not the microscopic operator $L^\V$ provided by the quasi-Reeb complement.

\subsubsection{Convergence of random walks}

Finally, in Section \ref{a:randomwalk}, we provide the probabilistic side of the construction of the microscopic Laplacian. Namely, as we see in Eq.\ \eqref{eq-microscopic}, the microscopic Laplacian is built from the scaling limit of single step of a random walk. In Theorem~\ref{AppendixConvergence2}, we show that this convergence of a single-step (in the sense of the convergence of the induced operator on smooth functions) can be promoted to the convergence of the random walks to the diffusion generated by the limit operator. This connects the microscopic Laplacian back to the heuristic discussion in the Section \ref{RiemIntro}. Moreover, in Theorem~\ref{AppendixConvergence}, the convergence of a sequence of random walks to the diffusion generated by some appropriate second-order operator is established in much more generality than required for the present paper (for example, the probability measure for choosing a co-vector at each step of the walk can be supported on the entire co-tangent space), with an eye toward a wider variety of possible approximation schemes.

\subsection{Comparison with recent literature}\label{s:literature}

In the Riemannian setting, different authors have analyzed the convergence of geodesic random walks to the Brownian motion generated by the Laplace-Beltrami operator. In particular, in \cite{PinksyRiem}, Pinsky considers a process (a random flight in our terminology, that he calls the isotropic transport process) that, starting from $x \in M$, moves along a geodesic with initial vector uniformly chosen on the unit-sphere, for a randomly chosen, exponentially-distributed time, after which a new vector is chosen. The lift of this walk to the tangent bundle is a Markov process that can be understood in terms of the corresponding semi-group. Using this, the author shows that under appropriate (parabolic) rescaling, the semi-group of the random flight converges to the Brownian motion semigroup (the generator of which is the Laplace-Beltrami operator). In \cite{LebeauRiem}, Lebeau and Michel investigate a random walk on smooth, compact, connected Riemannian manifolds, that at each step jumps to a uniformly chosen (according to the Riemannian volume measure) point in the ball of radius $h$ around the current position. A natural modification of such a random walk (based on the Metropolis algorithm) approximates Brownian motion on the manifold when $h$ is sent to zero and time is rescaled appropriately. Moreover, the authors consider the transition operator of the random walk, and prove that its rescaled spectrum approximates the spectrum of the Laplace-Beltrami operator. This provides a sharp rate of convergence of this random walk to its stationary distribution.

Horizontal Laplacians and diffusions on sub-Riemannian manifolds and random walk approximations to these have appeared several times in the literature recently. We make use of our notation and terminology in describing these results, in order to make the connection with the present paper clearer.

In \cite{LebeauSriem}, Lebeau and Michel study the spectral theory of a reversible Markov chain associated to a random walk on a sub-Riemannian manifold $M$, where, at each step, one walks along the integral lines (not geodesics, in general) of a fixed set of divergence-free vector fields $X_1,\ldots,X_k$ (with respect to some fixed volume $\omega$). This random walk depends on a parameter $h$. In particular, they prove the convergence when $h\to 0$ to an associated hypoelliptic diffusion, which, being that the vector fields are divergence free, coincides with $\Delta_\omega$. In a similar spirit as above, they also consider the rate of convergence to equilibrium of a random walk of this type.

In \cite{GordLaeOlder}, Gordina and Laetsch use a Riemannian extension of a sub-Riemannian metric to determine an orthogonal complement $\V_{\g}$ to $\distr$, which is equivalent to determining a subspace of horizontal co-vectors. This allows them to define a horizontal Laplacian by averaging over second-derivatives in the horizontal directions. The result is, in our terminology, a microscopic Laplacian that, in fact, depends only on the choice of complement $\V_{\g}$. They then introduce a corresponding family of horizontal random flights (in our terminology, though they call them random walks-- see the discussion in Appendix~\ref{a:randomwalk}), given by choosing a horizontal co-vector of fixed length uniformly at random, and then following the resulting geodesic at a constant speed for an exponentially distributed length of time, before repeating the procedure. They show that, under the natural parabolic scaling of the length of the co-vectors and the mean of the exponential ``travel times,'' these random flights converge to the diffusion generated by the horizontal Laplacian. To do this, they use resolvent formalism in order to prove the convergence of the relevant semi-groups, in a similar vein to the paper of Pinsky \cite{PinksyRiem} mentioned above.

In \cite{GordLae}, Gordina and Laetsch give a more systematic discussion of horizontal Laplacians relative to a choice of vertical complement. In particular, they take up the question of when a horizontal Laplacian relative to a choice of complement is equal to the divergence of the horizontal gradient with respect to some volume (or, as we have framed the question here, when the macroscopic Laplacian $\Delta_\omega$ is equal to the microscopic Laplacian $L^\V$). They give (see \cite[Theorem 5.13]{GordLae}) a condition for this that is equivalent to our Theorem \ref{t:compatibility}, though stated in terms of some Riemannian extension of the sub-Riemannian metric. Having given this result, they consider the application to three concrete examples of 3-dimensional Lie groups, the Heisenberg group, $SU(2)$, and the affine group.

In \cite{grong1}, Grong and Thalmaier define the horizontal Laplacian, corresponding to what we have called the microscopic Laplacian, by extending the sub-Riemannian metric to a Riemannian metric, considering an associated connection, and then taking the trace of a projection of the Hessian (see  \cite[Def. 2.1]{grong1}). The resulting operator depends only on the orthogonal complement $\V_{\g}$ to $\distr$ with respect to the Riemannian extension $\g$. Random walks are not considered in this approach, although it also gives a construction of the horizontal Laplacian associated to a choice of horizontal subspace. They do consider the question of when their horizontal Laplacian (with respect to $\V_{\g}$) is equal to the Laplacian determined by the Riemannian volume $\mathcal{R}_\g$ of the extension metric, and the result is essentially our Theorem \ref{t:compatibility} applied to this situation, presented in terms of the extension metric $\g$ (and with a similar proof). Nonetheless, the primary interest of \cite{grong1} is curvature-dimension inequalities for sub-Riemannian structures associated to certain Riemannian foliations, in the spirit of Baudoin and Garofalo \cite{FabriceJEMS}, not in a discussion of sub-Laplacians per se. Indeed, they go on to select, among all possible complements (and then associated diffusions) special ones, that are integrable and ``metric-preserving'' in a way suitable for their purpose.

In light of the above, the novelty of the present paper lies primarily in the following areas. We abandon the Riemannian formalism, and we use frame formalism, which seems better suited to the sub-Riemannian context (for instance, we get operators that depend on the choice of complement $\V$ by construction). Also, having given the condition for the equivalence of macroscopic and microscopic Laplacians (``the equivalence problem'') in frame formalism (again, Theorem \ref{t:compatibility}), we proceed to analyze many broad classes of examples in some detail (for example, any Carnot group and any corank-$1$ structure). In the contact case, we solve completely the equivalence problem. We do the same in the quasi-contact case, for the Popp volume $\omega = \popp$. This leads to the perhaps-surprising fact that there are 4-dimensional quasi-contact structures for which the macroscopic Laplacian with respect to the Popp volume, which is in a precise sense the canonical volume, cannot be realized as a microscopic Laplacian (indeed, this is generically the case, though the complete proof is beyond the scope of the present paper). More generally, we discuss to what extent volume measures on a sub-Riemannian manifold can be thought of as canonical, which has implications for the degree to which a macroscopic Laplacian can be thought of as canonical.

On the probabilistic side, we allow random walks where the choice of co-vector is supported on the entire unit cylinder in the co-tangent space, rather than just on the horizontal subspace (relative to some choice of complement). (This really just says that sub-Riemannian geometry is rather insensitive to adding some independent vertical component to the initial co-vectors in a random walk, which is not surprising.) Also, our Theorem \ref{AppendixConvergence} gives a general convergence result for random walks on (sub)-Riemannian manifolds, going beyond the particular type of random walks considered elsewhere in the paper. (This theorem has been recently used in \cite{OurVolume} to prove convergence of a class of random walks which, in a suitable way, sample the ambient volume.) From a technical perspective, this convergence of random walks is proved using martingale methods, rather than the semi-group approach mentioned above.


\section{Preliminaries}\label{s:prel}

We discuss some preliminaries in sub-Riemannian geometry. We essentially follow \cite{nostrolibro}, but see also \cite{montgomerybook,riffordbook,Jea-2014}. 
\begin{definition}
A \emph{sub-Riemannian manifold} is a triple $(M,\distr,\metr)$ where:
\begin{itemize}
\item $M$ is smooth, connected manifold;
\item $\distr \subset TM$ is a smooth distribution of constant rank $k < n$, satisfying \emph{H\"ormander’s condition}.
\item $\metr$ is a smooth scalar product on $\distr$: for all $q \in M$, $\metr_q$ is a positive definite quadratic form on $\distr_q$, smooth as a function of $q$.
\end{itemize}
This definition does not include Riemannian structures, for which $k =n$ and $\distr = TM$. We use the term \emph{(sub)-Riemannian} to refer to structures $(M,\distr,\metr)$ that are either Riemannian or sub-Riemannian.
\end{definition}
\begin{definition}
Define $\distr^{1}:=\distr$, $\distr^{i+1}:=\distr^{i}+[\distr^{i},\distr]$, for every $i\geq1$. A sub-Riemannian manifold is said to be \emph{equiregular} if for each $i\geq1$, the dimension of $\distr^{i}_{q}$ does not depend on the point $q\in M$. 
\end{definition}
Smooth sections of $\distr$ are called \emph{horizontal vector fields}. Hormander's condition guarantees that any two points in $M$ can be joined by a Lipschitz continuous curve whose velocity is a.e. in $\distr$ (Chow-Rashevskii theorem). We call such curves \emph{horizontal}. Horizontal curves $\gamma : I \to M$ have a well-defined length, given by
\begin{equation}
\ell(\gamma) = \int_I \|\gamma(t)\| dt,
\end{equation}
where $\| \cdot \|$ is the norm induced by the (sub)-Riemannian scalar product. The \emph{Carnot-Caratheodory distance} between two points $p,q \in M$ is
\begin{equation}
d(p,q) = \inf\{ \ell(\gamma) \mid \gamma \text{ horizontal curve connecting $q$ with $p$} \}.
\end{equation}
This distance turns $(M,\distr,\metr)$ into a metric space that has the same topology of $M$.
\begin{definition}
The \emph{Hamiltonian function} $H: T^*M \to \R$ associated with a sub-Riemannian structure is
\begin{equation}
H(\lambda) := \frac{1}{2} \sum_{i=1}^k \langle\lambda,X_i\rangle^2,
\end{equation}
for any choice of a local orthonormal frame $X_1,\ldots,X_k$ of horizontal fields, i.e. $\metr(X_i,X_j) =\delta_{ij}$.
\end{definition}
On each fiber, $H$ is non-negative quadratic form, and provides a way to measure the ``length'' of covectors. By setting $H_q:= H|_{T_q^*M}$, one can check that $\ker H_q = \distr_q^\perp$, the set of covectors that vanish on the distribution. In the Riemannian case, $H$ is precisely the fiber-wise inverse of the metric $\metr$.

Let $\sigma$ be the natural symplectic structure on $T^*M$, and $\pi$ the projection $\pi: T^*M \to M$. The \emph{Hamiltonian vector field} $\vec{H}$ is the unique vector field on $T^*M$ such that $dH = \sigma(\cdot,\vec{H})$. Integral lines of $\vec{H}$ are indeed smooth curves on the cotangent bundle that satisfy Hamilton's equations $\dot{\lambda}(t) = \vec{H}(\lambda(t))$.
\begin{definition}
The projections $\gamma(t) = \pi(\lambda(t))$ of integral lines of $\vec{H}$ are called \emph{normal (sub)-Riemannian geodesics}.
\end{definition}
Normal geodesics are indeed smooth and, as in the Riemannian case, are locally minimizing (i.e. any sufficiently small segment of $\gamma(t)$ minimizes the distance between its endpoints).
For any $\lambda \in T^*M$ we consider the associated normal geodesic $\gamma_\lambda(t)$, obtained as the projection of the integral line $\lambda(t)$ of $\vec{H}$ with initial condition $\lambda(0) = \lambda$. The \emph{initial covector} $\lambda$ plays the same role in sub-Riemannian geometry of the initial vector of Riemannian geodesics, with the important difference that an infinite number of distinct sub-Riemannian normal geodesics, with different initial covector, have the same initial vector. Notice that the Hamiltonian function, which is constant on integral lines $\lambda(t)$, measures the speed of the normal geodesic: 
\begin{equation}
2H(\lambda) = \|\dot\gamma_\lambda(t)\|^2,\qquad \lambda \in T^*M.
\end{equation}
We are ready for the following important definition.
\begin{definition}
Let $D_q\subseteq [0,\infty) \times T_q^*M$ the set of the pairs $(t,\lambda)$ such that the normal geodesic with initial covector $\lambda$ is well defined up to time $t$. The (sub)-Riemannian \emph{exponential map} (at $q \in M$) is the map  $\exp_q: D_q \to M$ that associates with $(t,\lambda)$ the point $\gamma_\lambda(t)$.
\end{definition}
When clear, we suppress the initial point $q$ in $\exp_q$. It is easy to show that, for any $\alpha >0 $, we have
\begin{equation}
\gamma_{\alpha \lambda}(t) = \gamma_{\lambda}(\alpha t).
\end{equation}
This rescaling property, due to the fact that $H$ is fiber-wise homogeneous of degree $2$, justifies the restriction to the subset of initial covectors lying in the level set $2H=1$. 
\begin{definition}
The \emph{unit cotangent bundle} is the set of initial covectors such that the associated normal geodesic has unit speed, namely
\begin{equation}
\cyl := \{\lambda \in T^*M \mid 2H(\lambda) = 1\} \subset T^*M.
\end{equation}
\end{definition}
\begin{rmk}
We stress that, in the sub-Riemannian case, the non-negative quadratic form $H_q = H|_{T_q^*M}$ has non-trivial kernel. It follows that the fibers $\cyl_q$ are cylinders and thus, non-compact, in sharp contrast with the Riemannian case (where the fibers $\cyl_q$ are spheres). 
\end{rmk}

For any $\lambda \in \cyl$, the corresponding geodesic $\gamma_\lambda(t)$ is parametrized by arc-length and $\ell(\gamma|_{[0,T]}) = T$. Even if $\cyl$ is not compact, all arc-length parametrized geodesics are well defined for a sufficiently small time. The next Lemma is a consequence of the form of Hamilton's equations and the compactness of small balls.
\begin{lemma}\label{l:prolong}
There exists $\eps >0$ such that $[0,\eps) \times \cyl_q \subseteq D_q$. In other words all arc-length parametrized normal geodesics $\gamma_\lambda(t)$ are well defined on the interval $[0,\eps)$.
\end{lemma}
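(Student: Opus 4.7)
The plan is to reduce this to a standard ODE existence argument on $T^*M$, circumventing the non-compactness of $\cyl_q$ through the specific structure of Hamilton's equations. The crucial observation is that, although a covector $\lambda \in \cyl_q$ can be arbitrarily large along the vertical directions $\distr_q^\perp$, the constraint $2H(\lambda)=1$ pins down its horizontal components, and the flow of $\vec H$ couples the unbounded vertical part only linearly to the bounded horizontal one.

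Concretely, I would fix a compact neighborhood $K \ni q$ and a local frame $X_1,\ldots,X_n$ on $K$ with $X_1,\ldots,X_k$ orthonormal for $\metr$. The fiberwise linear functions $h_i(\lambda):=\langle\lambda,X_i\rangle$ on $T^*M$ then give $2H = \sum_{i=1}^k h_i^2$, so along any integral curve of $\vec H$ on $\cyl$ one has $\sum_{i=1}^k h_i(t)^2 = 1$. Using the Poisson bracket identity $\{h_i,h_j\}=-h_{[X_i,X_j]}$ and the definition of the structural functions $c_{ij}^\ell$, a direct computation yields
\[
\dot h_i(t) \;=\; \{h_i,H\}(\lambda(t)) \;=\; -\sum_{j=1}^{k}\sum_{\ell=1}^{n} c_{ij}^{\ell}(\gamma(t))\, h_j(t)\, h_\ell(t),
\]
an ODE in which the outer sum on $j$ only runs over horizontal indices, so that one factor $h_j(t)$ is automatically bounded. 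This gives a linear-in-$\lambda$ estimate $|\dot h_i(t)| \le C\,\|h(t)\|$ as long as $\gamma(t) \in K$.

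I would then close the argument in two steps. Grönwall's inequality applied to the estimate above shows that, as long as $\gamma(t)$ stays in $K$, the entire covector $\lambda(t)$ grows at most exponentially and hence cannot blow up in finite time. To obtain an existence time uniform in $\lambda \in \cyl_q$, I would use that $\gamma_\lambda$ is arc-length parametrized, so that $d(q,\gamma_\lambda(t))\leq t$. Picking $\eps>0$ small enough that the closed Carnot--Carathéodory ball $\overline{B(q,\eps)}$ is compact and contained in $K$ -- possible since the CC distance induces the manifold topology -- forces $\gamma_\lambda(t)\in K$ for all $t\in[0,\eps)$ independently of $\lambda$, and combining this with Grönwall yields existence of the Hamiltonian integral curve on $[0,\eps)\times \cyl_q$, which is exactly the claim.

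The step I expect to be the main obstacle is the control of the vertical components of $\lambda(t)$: a naive flow-box compactness argument on $T^*M$ fails precisely because the vertical direction in $\cyl_q$ is unbounded, and a priori those components could blow up in finite time. What rules this out is the linear-in-$\lambda$ structure of Hamilton's equations, which itself is a consequence of the degeneracy of $H$ in the vertical directions. Once this observation is in hand, Grönwall and compactness of small CC-balls make the rest routine.
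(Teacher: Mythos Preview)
Your proposal is correct and follows precisely the approach the paper indicates: the paper does not give a detailed proof but states only that the lemma ``is a consequence of the form of Hamilton's equations and the compactness of small balls,'' and the explicit Gr\"onwall step you describe is in fact carried out later in the proof of Lemma~\ref{l:taylor}. Up to a sign convention in the Poisson bracket (the paper uses $\dot h_i = \{H,h_i\} = h_j c_{ji}^\alpha h_\alpha$), your argument is exactly the one intended.
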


In the Riemannian case, the gradient of a function is a vector field that points in the direction of the greatest rate of increase of the function. The generalization to the (sub)-Riemannian setting is straightforward.
\begin{definition}\label{d:grad}
Let $f \in C^\infty(M)$. The \emph{horizontal gradient} $\grad(f) \in \Gamma(\distr)$ is defined by
\begin{equation}
df(X) = \metr(\grad(f),X), \qquad \forall X \in \Gamma(\distr).
\end{equation}
\end{definition}
Since, in the Riemannian case, it is the usual gradient, this notation will cause no confusion.

\subsection{Computations with frames}

If $E$ is a smooth vector bundle over $M$, the symbol $\Gamma(E)$ denotes the $C^\infty(M)$-module of smooth sections of $E$. Horizontal vector fields are then elements of $\Gamma(\distr)$. 

In sub-Riemannian geometry, computations are most effectively done in terms of orthonormal frames (Riemannian normal coordinates are not available in general). Then, let $X_1,\ldots,X_k$ a (local) orthonormal frame for the sub-Riemannian structure. Moreover, consider some complement $X_{k+1},\ldots,X_n$, namely a local frame that completes $X_1,\ldots,X_k$ to a local frame for $TM$. Let $c_{ij}^\ell \in C^\infty(M)$ (the \emph{structural functions}) be defined by:
\begin{equation}
[X_i,X_j] = \sum_{\ell=1}^n c_{ij}^\ell X_\ell, \qquad i,j = 1,\ldots,n.
\end{equation}
Now define the functions $h_i : T^*M \to \R$ (that are linear on fibers) as:
\begin{equation}
h_i(\lambda) := \langle \lambda, X_i\rangle, \qquad i = 1,\ldots,n.
\end{equation}
We have
\begin{equation}
H = \frac{1}{2}\sum_{i=1}^k h_i^2, \qquad \vec{H} = \sum_{i=1}^k h_i \vec{h}_i,
\end{equation}
where $\vec{h}_i$ is the Hamiltonian vector field associated with $h_i$, namely $\sigma(\cdot,\vec{h}_i) = dh_i$. Indeed, for any fixed $q$, the restriction of $(h_1,\ldots,h_n) : T_{q}^*M \to \R^n$ gives coordinates to $T_{q}^*M$, associated with the choice of $X_1,\ldots,X_n$. In terms of these coordinates the fibers of the unit cotangent bundle are
\begin{equation}
\cyl_q := T_{q}^*M \cap \cyl = \{(h_1,\ldots,h_k,h_{k+1},\ldots,h_n) \mid h_1^2+\ldots+ h_k^2 = 1\}  \simeq \mathbb{S}^{k-1} \times \R^{n-k}.
\end{equation}
The last identification depends on the choice of the frame $X_1,\ldots,X_n$. Finally, for any normal geodesic $\gamma_\lambda(t)$
\begin{equation}
\dot{\gamma}_\lambda(t) = \pi_* \dot{\lambda}(t) = \pi_* \vec{H}(\lambda(t)) = \sum_{i=1}^k h_i(\lambda(t)) X_i,
\end{equation}
where we used the fact that $\pi_* \vec{h}_i = X_i$.

\subsection{A Taylor expansion with frames}

In this section we prove a Taylor expansion for smooth functions along normal geodesics. This formula will play a key role in the following. To keep a compact notation we often employ the \emph{generalized Einstein's convention}: repeated indices are summed, albeit on different ranges: Greek indices ($\alpha,\beta,\gamma,\dots$) from $1$ to $n$; Latin indices ($i,j,\ell,\dots$) from $1$ to $k$; barred Latin indices ($\bar{\imath},\bar{\jmath},\bar{\ell},\dots$) from $k+1$ to $n$.

\begin{lemma}\label{l:taylor}
Let $\phi \in C^\infty(M)$ and consider the geodesic $\gamma_\lambda(t)$, emanating from $q$. Then
\begin{equation}
\phi(\gamma_\lambda(t)) = \phi(q)+ t h_i X_i(\phi)(q) + \frac{1}{2}t^2 \left[h_j  c_{ji}^\alpha h_\alpha X_i(\phi)(q) + h_i h_j X_j(X_i(\phi))(q)\right] + t^3 r_\lambda(t),
\end{equation}
where the initial covector $\lambda \in T_q^*M$ has coordinates $(h_1,\ldots,h_n)$ and $r_\lambda(t)$ is a remainder term. Moreover for any $q \in M$, there exists an $\eps_0 >0$ and constants $A,B_{\bar\ell},C_{\bar\imath\bar\jmath} \geq 0$ such that, for any $\lambda \in \cyl$ with $\pi(\lambda) \in B(q,\eps_0)$ (the metric ball with center $q$ and radius $\eps_0$) we have
\begin{equation}
|r_\lambda(t)|\leq A + B_{\bar{\ell}} |h_{\bar\ell}| + C_{\bar\imath\bar\jmath} |h_{\bar\imath}| |h_{\bar\jmath}|, \qquad \forall t \leq \eps_0.
\end{equation}
\end{lemma}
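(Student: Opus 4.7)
My plan is to compute the Taylor polynomial directly from Hamilton's equations and then control the remainder by tracking the polynomial degree in the momentum coordinates, separating horizontal from vertical indices.

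First, I differentiate $t \mapsto \phi(\gamma_\lambda(t))$ using $\dot\gamma_\lambda(t) = \sum_{i=1}^k h_i(\lambda(t)) X_i(\gamma_\lambda(t))$, which gives $\tfrac{d}{dt}\phi(\gamma_\lambda) = h_i X_i(\phi)(\gamma_\lambda)$. A second application of the chain rule produces $\dot h_i\, X_i(\phi)(\gamma_\lambda) + h_i h_j\, (X_j X_i \phi)(\gamma_\lambda)$ with $i,j = 1,\ldots,k$. The derivative of the momenta along the Hamiltonian flow is $\dot h_\alpha = \{H, h_\alpha\} = \sum_{j=1}^k h_j \{h_j, h_\alpha\}$, and the Poisson bracket of fiber-linear functions equals (up to sign) $\{h_j, h_\alpha\}(\lambda) = -c_{j\alpha}^\beta h_\beta$. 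Evaluating at $t = 0$ and collecting terms reproduces the quadratic coefficient stated in the lemma. Taylor's theorem with integral remainder then writes $r_\lambda(t) = \tfrac{1}{2}\int_0^1(1-s)^2\, \tfrac{d^3}{d\tau^3}\phi(\gamma_\lambda(\tau))\big|_{\tau = st}\, ds$, so it remains to bound the third derivative uniformly on the specified set of $(t,\lambda)$.

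The core of the argument is a degree count. Iterating the chain rule and $\dot h_\alpha = -\sum_j h_j c_{j\alpha}^\beta h_\beta$ three times shows that $\tfrac{d^3}{d\tau^3}\phi(\gamma_\lambda(\tau))$ is a finite sum of terms of the form $P(h(\lambda(\tau)))\cdot F(\gamma_\lambda(\tau))$, where $F$ is smooth on $M$ (built from iterated $X_j$-derivatives of $\phi$ and of the structural functions $c_{ij}^\ell$), and $P$ is a polynomial of total degree $3$ in the $h_\alpha$. The crucial point is that each differentiation in $\tau$ extracts one explicit factor $h_i$ with $i \leq k$: either from the base point derivative $\tfrac{d\gamma_\lambda}{d\tau} = h_i X_i$ (acting on $F$), or from $\dot h_\alpha$, whose expression carries exactly one horizontal momentum factor. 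Hence each monomial in $P$ has at least one horizontal factor, leaving at most $2$ vertical factors $h_{\bar\ell}$.

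To conclude, note that on $\cyl$ the identity $\sum_{i=1}^k h_i(\lambda(\tau))^2 = 2H(\lambda(\tau)) = 1$ is conserved along the flow, so horizontal momenta are uniformly bounded by $1$. Since $\|\dot\gamma_\lambda\| \leq 1$, choosing $\eps_0$ small enough ensures $\gamma_\lambda(\tau) \in \overline{B(q,2\eps_0)}$ for all $\tau \in [0,\eps_0]$ whenever $\pi(\lambda) \in B(q,\eps_0)$; in particular, the structural functions stay bounded along the geodesic. Then $|\dot h_{\bar\alpha}(\lambda(\tau))| \leq K_0 + K_1 \max_{\bar\beta} |h_{\bar\beta}(\lambda(\tau))|$ for constants depending only on the chosen neighborhood, and Gronwall's inequality yields $|h_{\bar\alpha}(\lambda(\tau))| \leq C\bigl(1 + \sum_{\bar\beta}|h_{\bar\beta}|\bigr)$ uniformly for $\tau \in [0,\eps_0]$. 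Substituting these bounds into the polynomial $P$, whose vertical degree is at most $2$, produces an estimate of the required form $A + B_{\bar\ell}|h_{\bar\ell}| + C_{\bar\imath\bar\jmath}|h_{\bar\imath}||h_{\bar\jmath}|$. The main obstacle is the combinatorial bookkeeping in the third paragraph: when $\tfrac{d}{d\tau}$ acts on a coefficient $F(\gamma_\lambda(\tau))$ it produces an extra horizontal factor $h_j\, X_j F(\gamma_\lambda)$, and one has to verify carefully that this horizontal factor always accounts for precisely the slot that keeps the vertical degree of $P$ bounded by $2$. Once this is settled, the Gronwall step and the final substitution are routine.
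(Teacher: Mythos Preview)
Your proposal is correct and follows essentially the same route as the paper: compute the first two derivatives via $\dot\gamma = h_i X_i$ and $\dot h_\alpha = h_j c_{j\alpha}^\beta h_\beta$, bound the third derivative using $|h_i|\leq 1$ on $\cyl$ and compactness of $\overline{B(q,2\eps_0)}$, then use Gronwall to replace $h_{\bar\imath}(\tau)$ by $h_{\bar\imath}(0)$. The only cosmetic differences are that the paper uses the Lagrange form of the remainder and writes out the seven terms of the third derivative explicitly rather than appealing to your degree-counting argument (each differentiation raises the total degree by one while contributing at least one horizontal factor, so the vertical degree stays $\leq 2$); both lead to the same estimate.
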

\begin{rmk}
In coordinates $\lambda = (h_1,\ldots,h_n)$ with $h_1^2+\ldots+h_k^2 = 1$. Thus, the estimate above shows how the remainder term depends on the ``unbounded'' coordinates $h_{k+1},\ldots,h_n$ of the initial covector.
\end{rmk}
\begin{proof}
The geodesic $\gamma_\lambda(t)$ is the projection of the integral curve $\lambda(t)$. Its initial covector $\lambda \in T_{q}^*M$ has coordinates $(h_1,\ldots,h_n) \in \R^n$. We drop the subscript $\lambda$, since it's fixed. As we described above,
\begin{equation}\label{eq:firstder}
\frac{d}{d t}\phi(\gamma(t)) = \dot\gamma(t) ( \phi) = h_i(t) X_i(\phi)(\gamma(t)),
\end{equation}
where $h_i(t)$ is a shorthand for $h_i(\lambda(t))$. Similarly, for any function $g \in C^\infty(T^*M)$, we set $g(t)= g(\lambda(t))$. With this notation, Hamilton's equations are:
\begin{equation}
\dot{g}(t) = \{H,g\}(t),
\end{equation}
where $\{\cdot,\cdot\}$ denotes the Poisson bracket, and the dot the derivative w.r.t. $t$. For $h_i:T^*M \to \R$ we get
\begin{equation}	
\dot h_i = \{H,h_i\} = h_j\{h_j,h_i\} = h_j c_{ji}^\alpha h_\alpha,
\end{equation}
where we suppressed the explicit evaluation at $t$. We apply the chain rule to Eq.~\eqref{eq:firstder} and we get
\begin{equation}\label{eq:seconder}
\frac{d^2}{d t^2}\phi(\gamma(t)) = \dot{h}_i X_i(\phi) + h_i h_j X_j(X_i(\phi)) = h_j c_{ji}^\alpha h_\alpha X_i(\phi) + h_i h_j X_j(X_i(\phi)).
\end{equation}
Evaluating at $t=0$ we get the second order term. Now let $t \leq \eps_0$. The remainder of Taylor's expansion in Lagrange's form is:
\begin{equation}
r_\lambda(t) = \frac{1}{3!}\left.\frac{d^3}{d t^3}\right|_{t=t_*}\phi(\gamma(t)), \qquad t_* \in [0,t].
\end{equation}
To compute it we apply the chain rule to Eq.~\eqref{eq:seconder}. By Hamilton's equations $\dot{h}_\alpha = h_i c_{i\alpha}^\beta h_\beta$, we reduce it to a polynomial in $h_1(t),\ldots,h_n(t)$, structural functions $c_{\alpha\beta}^\gamma$ and their first derivatives $X_i(c_{\alpha\beta}^\gamma)$:
\begin{equation}
\begin{aligned}
\frac{d^3}{d t^3}\phi(\gamma(t))& = h_\ell c_{\ell j}^\beta h_\beta c_{ji}^\alpha h_\alpha X_i(\phi) + h_j h_\ell X_\ell(c_{ji}^\alpha) h_\alpha X_i(\phi) + h_j c_{ji}^\alpha h_\ell c_{\ell \alpha}^\beta h_\beta X_i(\phi) \\
& \quad + h_j c_{ji}^\alpha h_\alpha h_\ell X_\ell( X_i(\phi))+ h_\ell c_{\ell i}^\alpha h_\alpha h_j X_j(X_i(\phi)) \\
& \quad + h_i h_\ell c_{\ell j}^\alpha h_\alpha X_j(X_i(\phi)) + h_i h_j h_\ell X_\ell(X_j(X_i(\phi))).
\end{aligned}
\end{equation}
We stress that everything on the r.h.s. is computed at $t$. Since $\lambda \in \cyl_q$, and $2H$ is a constant of the motion, each $|h_i(t)| \leq 1$ for $i=1,\ldots,k$. Fix $q \in M$, and consider the sub-Riemannian closed ball $\overline{B(q,2\eps_0)}$, that is compact for sufficiently small $\eps_0$. If $\lambda \in \cyl \cap \pi^{-1}(B(q,\eps_0))$, the length parametrized geodesic $\gamma_\lambda$ does not exit the compact $\overline{B(q,2\eps_0)}$. Therefore also the structural functions and their derivatives are bounded by their maximum on $\overline{B(q,2\eps_0)}$. The only a priori uncontrolled terms are then the $h_{\bar\imath}(t)$, for $\bar\imath=k+1,\ldots,n$. Thus:
\begin{equation}\label{eq:remainder_t}
|r_\lambda(t)| \leq P + Q_{\bar{\ell}} |h_{\bar{\ell}}(t)| + R_{\bar{\imath}\bar{\jmath}} |h_{\bar{\imath}}(t)| |h_{\bar{\jmath}}(t)|, \qquad \forall t \leq \eps_0,
\end{equation}
where $P,Q_{\bar{\ell}}, R_{\bar{\imath}\bar{\jmath}} \geq 0$ are constants. Gronwall's Lemma shows that, for $t\leq \eps_0$ we have
\begin{equation}
h_{\bar\imath}(t) \leq D_{\bar\imath} + E_{\bar\imath\bar\jmath} | h_{\bar{\jmath}}(0)|, \qquad \bar\imath = k+1,\ldots,n,
\end{equation}
for constants $D_{\bar\imath},E_{\bar\imath\bar\jmath} \geq 0$. By plugging this last equation into Eq.~\eqref{eq:remainder_t}, we obtain the result.
\end{proof}


\section{On volume forms in (sub)-Riemannian geometry}\label{s-intrinsic}

The aim of this section is to introduce a rigorous definition of N-intrinsic volume as the one that ``depends only on the first order approximation of the (sub)-Riemannian structure'' (see the discussion in Section~\ref{s:intro}).

Let $M$ be an orientable $n$-dimensional smooth manifold. A (smooth) \emph{volume form} is a positive $n$-form $\omega$, that is $\omega(X_1,\ldots,X_n)  > 0$ for any positively-oriented local frame $X_1,\ldots,X_n$. Any volume form $\omega$ defines a positive measure on Borel sets of $M$, that we still call $\omega$.   Let $f \in C^\infty(M)$, and $U$ a Borel set. In this way, $\int_U f \omega$ denotes the integral over $U$ of the function $f$ with respect to the measure induced by $\omega$. 

\begin{definition}\label{d:diver}
Let $X \in \Gamma(TM)$ and $\omega$ be a volume form. The \emph{divergence} $\div_\omega (X)$ is the function defined by
\begin{equation}
\mathcal{L}_X \omega = \div_\omega(X) \omega,
\end{equation}
where $\mathcal{L}_X$ denotes the Lie derivative in the direction of $X$.
\end{definition}

The next two lemmas are easy consequences of the definition.
\begin{lemma}
Let $f \in C_0^\infty(M)$ and $\omega$ be a volume form. Then
\begin{equation}
\int_M f \div_\omega(X) \omega = - \int_M df(X) \omega.
\end{equation}
\end{lemma}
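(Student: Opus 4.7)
The plan is to apply Cartan's magic formula together with Stokes' theorem, exploiting the compact support of $f$ to kill the boundary term. The identity is essentially integration by parts for the Lie derivative along $X$, and the proof will be a short formal manipulation.

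First I would note that since $\omega$ is a top-degree form on $M$, we have $d\omega = 0$, so Cartan's formula $\mathcal{L}_X \omega = d\,i_X \omega + i_X d\omega$ simplifies to $\mathcal{L}_X \omega = d\,i_X \omega$. Next I would apply the Leibniz rule for the Lie derivative on the product $f\omega$:
\begin{equation}
\mathcal{L}_X(f\omega) = (Xf)\,\omega + f\,\mathcal{L}_X \omega = df(X)\,\omega + f\,\div_\omega(X)\,\omega,
\end{equation}
using Definition~\ref{d:diver} in the last step.

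On the other hand, by Cartan's formula applied to $f\omega$ (again $d(f\omega) = 0$ as an $(n+1)$-form on an $n$-manifold),
\begin{equation}
\mathcal{L}_X(f\omega) = d\,i_X(f\omega) = d(f\,i_X \omega).
\end{equation}
Since $f \in C_0^\infty(M)$, the $(n-1)$-form $f\,i_X\omega$ has compact support, so by Stokes' theorem $\int_M d(f\,i_X\omega) = 0$. Combining the two expressions for $\mathcal{L}_X(f\omega)$ and integrating over $M$ gives
\begin{equation}
0 = \int_M df(X)\,\omega + \int_M f\,\div_\omega(X)\,\omega,
\end{equation}
which rearranges to the desired identity.

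There is no real obstacle here: the only subtlety is to make sure that compact support of $f$ (not $X$) is enough to apply Stokes without boundary contribution, which it is because $f\,i_X\omega$ vanishes outside the support of $f$. The orientability of $M$, implicit in the existence of a global volume form $\omega$, is what allows us to invoke Stokes' theorem globally.
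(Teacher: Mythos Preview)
Your proof is correct and is the standard argument. The paper does not actually spell out a proof of this lemma; it simply states that it is an easy consequence of the definition of $\div_\omega$, and your computation via Cartan's formula and Stokes' theorem is exactly the routine verification one has in mind.
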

\begin{lemma}
Let $f \in C^\infty(M)$, with $f \neq 0$ and $\omega$ a volume form. Let $\omega' = f\omega$. Then
\begin{equation}
\div_{\omega'}(X) = \div_{\omega}(X) + X(\log |f|).
\end{equation}
\end{lemma}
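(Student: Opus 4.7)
The plan is a direct computation from Definition~\ref{d:diver} using the Leibniz rule for the Lie derivative acting on a product of a function and a differential form. Since $\omega' = f\omega$, I will first compute $\mathcal{L}_X \omega'$ in two different ways and compare.

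On the one hand, by the definition of divergence applied to $\omega'$,
\begin{equation}
\mathcal{L}_X \omega' = \div_{\omega'}(X)\, \omega' = \div_{\omega'}(X)\, f\omega.
\end{equation}
On the other hand, the Leibniz rule gives $\mathcal{L}_X(f\omega) = X(f)\,\omega + f\,\mathcal{L}_X \omega$, and the second term equals $f \div_\omega(X)\,\omega$ by definition. Equating the two expressions for $\mathcal{L}_X \omega'$ and dividing through by $f$ (which is nonvanishing by assumption), I obtain
\begin{equation}
\div_{\omega'}(X) = \div_\omega(X) + \frac{X(f)}{f}.
\end{equation}

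The final step is to observe that $X(f)/f = X(\log|f|)$ pointwise: where $f>0$ this is immediate, and where $f<0$ one has $\log|f| = \log(-f)$ so $X(\log|f|) = X(-f)/(-f) = X(f)/f$. Since $f$ is smooth and nonvanishing, it has constant sign on each connected component, so this identity holds globally and yields the stated formula. There is no real obstacle; the only subtle point is the absolute value, which is handled by the sign computation just noted.
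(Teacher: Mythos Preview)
Your proof is correct and is precisely the standard computation the paper has in mind; the paper omits the proof of this lemma, calling it an ``easy consequence of the definition,'' but carries out exactly your Leibniz-rule argument later in the proof of Lemma~\ref{l:changeofvolume} (with $f=e^g$).
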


\subsection{The nilpotent approximation}

A key ingredient is the definition of \emph{nilpotent approximation}: a (sub)-Riemannian structure on the tangent space at a point $T_q M$ which, in a sense, is the ``first order approximation'' of the (sub)-Riemannian structure. Let $\distr$ be an equiregular, bracket-generating distribution. The \emph{step} of $\distr$ is the first $m$ such that $\distr_q^m = T_q M$. 

\begin{definition}
The \emph{nilpotentization} of $\distr$ at a point $q \in M$ is the graded vector space
\begin{equation}
\hat{M}_q := \distr_q \oplus \distr_q^2/\distr_q \oplus \dots \oplus \distr^m_q/\distr_q^{m-1}.
\end{equation}
\end{definition}
The vector space $\hat{M}_q$ can be endowed with a Lie algebra structure, which respects the grading, induced by the Lie bracket, as follows. Let $X_q \in \distr^i_q/\distr^{i-1}_q$ and $Y_q \in \distr^j_q/\distr^{j-1}_q$. Let $X,Y \in \Gamma(TM)$ smooth extensions of any representative of $X_q,Y_q$. Then the Lie product between $X_q$ and $Y_q$ is
\begin{equation}
[X_q,Y_q] := [X,Y]_q \mod \distr_q^{i+j-1} \in \distr_q^{i+j}/\distr_q^{i+j-1}.
\end{equation}
Under the equiregularity assumption, this product is well defined and does not depend on the choice of the representatives and of the extensions. Observe that the graded Lie algebra $\hat{M}_q$ is nilpotent. Then there is a unique connected, simply connected group, such that its Lie algebra is $\hat{M}_q$ (that we identify with the group itself). The global, left-invariant vector fields obtained by the group action on any orthonormal basis of $\distr_q \subset \hat{M}_q$ give $\hat{M}_q$ the structure of a left-invariant (sub)-Riemannian manifold, which is called the \emph{nilpotent approximation} of the sub-Riemannian structure $(M,\distr,\g)$ at the point $q$. We stress that:
\begin{itemize}
\item the base manifold is the vector space $\hat{M}_q \simeq T_q M$ (the latter identification is \emph{not} canonical);
\item the distribution and the scalar product are obtained by extending $\distr_q,\g_q$ using the left action of the group.
\end{itemize}

It can be proved that $\hat{M}_q$ is isometric (as a metric space) to the Gromov tangent cone at $q$ of the metric space $(M,d)$ where $d$ is the Carnot-Carath\'eodory distance induced by the (sub)-Riemannian structure (see~\cite{montgomerybook,bellaiche,mitchell}).

\subsection{Popp's volume}\label{s:Popp}

In this section we provide the definition of Popp's volume. Our presentation follows closely the one of \cite{nostropopp,montgomerybook}. The definition rests on the following lemmas.

\begin{lemma} \label{l:mont1} 
Let $E$ be an inner product space, and let $\pi:E\to V$ be a surjective linear map. Then $\pi$ induces an inner product on $V$ such that the length of $v \in V$ is
\begin{equation} \label{eq:final}
\|v\|_V = \min\{ \|e\|_E \text{ s.t. } \pi(e) = v \}.
\end{equation}
\end{lemma}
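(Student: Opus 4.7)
The plan is to realize $V$ as isometric to the orthogonal complement of $\ker\pi$ in $E$, and then read off the minimum formula from the Pythagorean theorem.

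First I would observe that the restriction $\pi|_{(\ker\pi)^\perp} : (\ker\pi)^\perp \to V$ is a linear isomorphism: surjectivity follows from surjectivity of $\pi$ (any preimage can be projected orthogonally onto $(\ker\pi)^\perp$ without changing its image), and injectivity follows because $(\ker\pi)^\perp \cap \ker\pi = \{0\}$. Then I would define the inner product on $V$ by pulling back the inner product on $(\ker\pi)^\perp \subset E$ through this isomorphism. Equivalently, for $v,w \in V$ set $\langle v,w\rangle_V := \langle e_v, e_w\rangle_E$, where $e_v, e_w$ are the unique elements of $(\ker\pi)^\perp$ mapping to $v,w$ respectively. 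This is manifestly bilinear, symmetric, and positive definite since $\pi|_{(\ker\pi)^\perp}$ is an isomorphism.

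Next I would verify the formula \eqref{eq:final}. Given $v \in V$, the preimage $\pi^{-1}(v) \subset E$ is the affine subspace $e_v + \ker\pi$. For any $k \in \ker\pi$, since $e_v \perp k$, the Pythagorean identity gives
\begin{equation}
\|e_v + k\|_E^2 = \|e_v\|_E^2 + \|k\|_E^2 \geq \|e_v\|_E^2,
\end{equation}
with equality if and only if $k=0$. Hence the minimum in \eqref{eq:final} is attained uniquely at $e_v$, and equals $\|e_v\|_E = \|v\|_V$ by definition of the induced inner product.

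There is no real obstacle here; the statement is essentially the observation that orthogonal projection minimizes distance to a subspace. The only small point worth being careful about is the well-definedness of $e_v$, which relies on the splitting $E = \ker\pi \oplus (\ker\pi)^\perp$ (guaranteed by the inner product structure on $E$, at least in finite dimensions or under completeness assumptions that will be in force in the intended application).
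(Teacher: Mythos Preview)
Your proof is correct and complete. The paper itself does not prove this lemma; it simply states that the proof can be found in \cite{nostropopp} and only reproduces the proof of Lemma~\ref{l:mont2}. Your argument via the isomorphism $\pi|_{(\ker\pi)^\perp}:(\ker\pi)^\perp\to V$ and the Pythagorean identity is the standard one and is exactly what one would expect to find in the cited reference.
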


\begin{lemma} \label{l:mont2} 
Let $E$ be a vector space of dimension $n$ with a flag of linear subspaces $\{0\} = F^0 \subset F^1\subset F^2 \subset \ldots\subset F^m = E$. Let $\gr(F) = F^1\oplus F^2/F^1\oplus \ldots \oplus F^m/F^{m-1}$ be the associated graded vector space. Then there is a canonical isomorphism $\theta: \wedge^n E \to \wedge^n \gr(F)$. 
\end{lemma}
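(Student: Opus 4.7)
The plan is to define $\theta$ on a decomposable top form coming from an adapted basis, and then to show that this definition does not depend on the choice of such basis. Since $\wedge^n E$ and $\wedge^n \gr(F)$ are both one-dimensional, any canonical nonzero assignment automatically gives an isomorphism.

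First I would fix notation: let $n_i = \dim F^i - \dim F^{i-1}$, so $n = n_1 + \cdots + n_m$. Call a basis $e_1,\ldots,e_n$ of $E$ \emph{adapted to the flag} if $e_1,\ldots,e_{\dim F^i}$ is a basis of $F^i$ for every $i$. Given such a basis, for each $i$ and each $\dim F^{i-1} < j \leq \dim F^i$, let $\bar e_j \in F^i/F^{i-1}$ be the class of $e_j$. These $\bar e_j$ form a basis of $\gr(F)$. I would then define
\begin{equation}
\theta(e_1 \wedge \cdots \wedge e_n) := \bar e_1 \wedge \cdots \wedge \bar e_n,
\end{equation}
and extend by linearity. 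Since $\wedge^n E$ is one-dimensional and spanned by $e_1 \wedge \cdots \wedge e_n$, this determines $\theta$ completely.

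The key step is independence of the adapted basis. Let $e'_1,\ldots,e'_n$ be another adapted basis, and let $A$ be the change-of-basis matrix, $e'_j = \sum_i A_{ij} e_i$. Because both bases are adapted, $A$ has block upper-triangular form with diagonal blocks $A_1,\ldots,A_m$ of sizes $n_1,\ldots,n_m$. Therefore $\det A = \prod_i \det A_i$. On the other hand, the induced change of basis on $F^i/F^{i-1}$ is precisely $A_i$, so the change-of-basis matrix on $\gr(F)$, written in the block-diagonal basis $\{\bar e_j\}$, is the block-diagonal matrix $\operatorname{diag}(A_1,\ldots,A_m)$, whose determinant is also $\prod_i \det A_i$. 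Consequently $e'_1\wedge\cdots\wedge e'_n = (\det A)\, e_1\wedge\cdots\wedge e_n$ and $\bar e'_1\wedge\cdots\wedge \bar e'_n = (\det A)\, \bar e_1\wedge\cdots\wedge \bar e_n$, so $\theta$ is well-defined. Since it sends a basis element to a basis element of a one-dimensional space, it is an isomorphism.

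The main obstacle, although modest, is precisely the block-triangular structure argument: one must verify that the compatibility of the new basis with the flag forces the change of basis matrix to be block upper-triangular, and that its diagonal blocks are exactly the changes of basis induced on the quotients $F^i/F^{i-1}$. The canonicity (naturality under isomorphisms of flagged vector spaces) then follows immediately from the intrinsic nature of the determinant.
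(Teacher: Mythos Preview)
Your proof is correct and follows essentially the same approach as the paper: define $\theta$ via an adapted basis by sending $e_1\wedge\cdots\wedge e_n$ to the wedge of the corresponding equivalence classes, then check well-definedness using that the change-of-basis matrix between two adapted bases is block upper-triangular. Your write-up is in fact more explicit than the paper's, which only sketches the independence step by remarking that such matrices are upper triangular.
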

The proofs can be found in \cite{nostropopp}. We report here the proof of Lemma~\ref{l:mont2} since it contains the definition of the important map $\theta$.
\begin{proof}
We only give a sketch of the proof. For $0\leq i \leq m$, let $k_i:= \dim F^i$. Let $X_1,\dots,X_n$ be an adapted basis for $E$, i.e. $X_1,\dots, X_{k_i}$ is a basis for $F^i$. We define the linear map $\widehat{\theta}: E \to \gr(F)$ which, for $0\leq j \leq m-1$, takes $X_{k_j+1}, \dots, X_{k_{j+1}}$ to the corresponding equivalence class in $F^{j+1}/F^j$. This map is indeed a non-canonical isomorphism, which depends on the choice of the adapted basis. In turn, $\widehat{\theta}$ induces a map $\theta : \wedge^n E \to \wedge^n \gr(F)$, which sends $X_1\wedge\ldots\wedge X_n$ to $\widehat{\theta}(X_1)\wedge\ldots\wedge\widehat{\theta}(X_n)$. The proof that $\hat{\theta}$ does not depend on the choice of the adapted basis is a straightforward check, and boils down to the fact that two different adapted basis are related by an upper triangular matrix (see \cite{nostropopp,montgomerybook}).
\end{proof}

The idea behind Popp's volume is to define an inner product on each $\distr^i_q/\distr^{i-1}_q$ which, in turn, induces an inner product on the orthogonal direct sum $\hat{M}_q$. The latter has a natural volume form, such that its value on any oriented orthonormal basis is $1$. Then, we employ Lemma~\ref{l:mont2} to define an element of $(\wedge^n T_q M)^*$, which is Popp's volume form computed at $q$.

Fix $q \in M$. Then, let $v,w \in \distr_q$, and let $V,W$ be any horizontal extensions of $v,w$. Namely, $V,W \in \Gamma(\distr)$ and $V(q) =v$, $W(q) = w$. Let $2\leq i \leq m$. The linear maps $\pi_i: \otimes^i \distr_q \to \distr_q^i/\distr_q^{i-1}$
\begin{equation}\label{eq:pimap}
\pi_i(v_1\otimes\dots\otimes v_i) = [V_1,[V_2,\dots,[V_{i-1},V_i]]]_q \mod \distr^{i-1}_q,
\end{equation}
are well defined and do not depend on the choice of the horizontal extensions $V_1,\dots,V_i$ of $v_1,\dots,v_i$.

By the bracket-generating condition, $\pi_i$ are surjective and, by Lemma~\ref{l:mont1}, they induce an inner product space structure on $\distr_q^i/\distr_q^{i-1}$. Therefore, the nilpotentization of the distribution at $q$, namely 
\begin{equation}
\hat{M}_q = \distr_q \oplus \distr_q^2/\distr_q \oplus \ldots \oplus \distr_q^m/\distr_q^{m-1}\,,
\end{equation}
is an inner product space, as the orthogonal direct sum of a finite number of inner product spaces. As such, it is endowed with a canonical volume (defined up to a sign) $\omega_q \in (\wedge^n \hat{M}_q)^*$ such that $\omega_q(v_1,\ldots , v_n)=1$ for any orthonormal basis $v_1,\ldots,v_n$ of the vector space $\hat{M}_q$.

Finally, Popp's volume (computed at the point $q$) is obtained by transporting the volume of $\hat{M}_q$ to $T_q M$ through the map $\theta_q :\wedge^n T_q M \to \wedge^n \hat{M}_q$ defined in Lemma~\ref{l:mont2}. Namely 
\begin{equation}\label{eq:popppoint}
\popp_q =\theta_{q}^{*}(\omega_{q})= \omega_q \circ \theta_q,
\end{equation}
where $\theta_{q}^{*}$ denotes the dual map. Eq.~\eqref{eq:popppoint} is defined only in the domain of the chosen local frame. Since $M$ is orientable, with a standard argument, these local $n$-forms can be glued to a global one, called Popp's volume $\popp$. Moreover, Popp's volume is smooth, as follows, for example, by the explicit formula in \cite[Theorem 1]{nostropopp}. It is also clear that, in the Riemannian case, Popp's volume is the standard Riemannian one. Notice that Popp's volume is well defined only for equiregular structures.

\subsubsection{Behavior under isometries}

In the Riemannian setting, an isometry is a diffeomorphism such that its differential preserves the Riemannian scalar product. The concept is easily generalized to the sub-Riemannian case.
\begin{definition}
Let $(M,\distr_M,\metr_M)$ and $(N,\distr_N,\metr_N)$ be two (sub)-Riemannian structures. A (local) diffeomorphism $\phi: M \to N$ is a \emph{(local) isometry} if its differential $\phi_* : TM \to TN$ preserves the (sub)-Riemannian structure, namely (i) $\phi_* \distr_{M} = \distr_N$, and (ii) $\phi^* \metr_N = \metr_M$.
\end{definition}
The following is a trivial, but crucial property of Popp's volume that follows by its construction.
\begin{prop}\label{p:volumepres}
(Sub)-Riemannian (local) isometries $\phi:M \to M$  preserve Popp's volume, namely $\phi^*\popp = \popp$.
\end{prop}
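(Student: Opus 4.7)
The strategy is to unwind the construction of Popp's volume recalled in Section~\ref{s:Popp} and verify that each ingredient behaves naturally under (local) isometries. Since Popp's volume is defined pointwise via the isomorphism $\theta_q$ in terms of the inner product structure on the graded space $\hat{M}_q$, which in turn is built only from the distribution, the Lie bracket, and the sub-Riemannian metric on $\distr$, it suffices to check that all three are respected by $\phi_*$.

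First I would show that $\phi_*\distr_q^i = \distr_{\phi(q)}^i$ for every $i$. The base case $i=1$ is the definition of isometry. For the inductive step, the fact that $\phi_*[X,Y] = [\phi_*X,\phi_*Y]$ for any vector fields $X,Y$ implies $\phi_*(\distr^i + [\distr^i,\distr]) = \distr^i_{\phi(\cdot)} + [\distr^i_{\phi(\cdot)},\distr_{\phi(\cdot)}]$, so the whole flag is preserved. Consequently $\phi_*$ descends to linear isomorphisms $\bar{\phi}_*: \distr_q^i/\distr_q^{i-1} \to \distr_{\phi(q)}^i/\distr_{\phi(q)}^{i-1}$ for every $i$, and hence to a linear isomorphism of the nilpotentizations $\bar{\phi}_*:\hat{M}_q \to \hat{M}_{\phi(q)}$.

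Next I would check that $\bar{\phi}_*$ is in fact an isometry of inner product spaces. On the first layer this is immediate. On layer $i\geq 2$, recall from Eq.~\eqref{eq:pimap} that the inner product is the one induced by the surjection $\pi_i: \otimes^i \distr_q \to \distr_q^i/\distr_q^{i-1}$ via Lemma~\ref{l:mont1}. Because $\phi_*$ preserves $\distr$ and commutes with iterated Lie brackets, one has the commutative diagram
\begin{equation}
\pi_i^{\phi(q)} \circ (\phi_*)^{\otimes i} = \bar{\phi}_* \circ \pi_i^q,
\end{equation}
where $(\phi_*)^{\otimes i}$ on $\otimes^i\distr_q$ is an isometry by the first-layer case. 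The characterization in Eq.~\eqref{eq:final} then shows $\bar{\phi}_*$ is an isometry on the $i$-th quotient. Taking orthogonal direct sums, $\bar{\phi}_*: \hat{M}_q \to \hat{M}_{\phi(q)}$ is an isometry, and therefore sends the canonical volume $\omega_q$ to $\pm\omega_{\phi(q)}$ (with the correct sign if orientations are chosen compatibly).

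Finally, the canonical isomorphism $\theta_q:\wedge^n T_qM \to \wedge^n \hat{M}_q$ of Lemma~\ref{l:mont2} is defined purely from the filtration $\distr_q^\bullet$, which $\phi_*$ preserves, so one gets the naturality relation $\theta_{\phi(q)} \circ (\wedge^n \phi_*) = (\wedge^n \bar{\phi}_*) \circ \theta_q$. Combining with Eq.~\eqref{eq:popppoint} and the previous step,
\begin{equation}
(\phi^*\popp)_q = \popp_{\phi(q)} \circ \wedge^n\phi_* = \omega_{\phi(q)} \circ \theta_{\phi(q)} \circ \wedge^n \phi_* = \omega_{\phi(q)} \circ \wedge^n\bar{\phi}_* \circ \theta_q = \omega_q \circ \theta_q = \popp_q.
\end{equation}
No step is really difficult; the only point requiring care is the isometry property of $\bar{\phi}_*$ on the higher layers, for which the minimum-norm characterization of Lemma~\ref{l:mont1} together with the commutation of $\phi_*$ with Lie brackets does all the work.
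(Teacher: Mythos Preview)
Your proof is correct and is precisely the detailed verification that the paper omits: the paper does not prove this proposition at all, simply stating that it ``follows by its construction.'' Your step-by-step check that the flag, the induced inner products via the maps $\pi_i$ and Lemma~\ref{l:mont1}, and the isomorphism $\theta_q$ are all natural under isometries is exactly what is needed to make that sentence rigorous, and there is no alternative route here.
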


\subsection{Intrinsic volumes}

An \emph{intrinsic definition of volume} is some algorithm that associates with any (sub)-Riemannian structure, a volume form, and this association is preserved by isometries. Let us be more precise.

\begin{definition}
An \emph{intrinsic definition of volume} is a map that associates, with any (sub)-Riemannian structure $(M,\distr,\g)$ a volume form $\omega_M$ on $M$ such that if $\phi: M \to N$ is a (sub)-Riemannian isometry between $(M,\distr_M,\g_M)$ and $(N,\distr_N,\g_N)$, then $\phi^* \omega_N  = \omega_M$.
\end{definition}
In the following we avoid this verbose terminology and we often talk about ``(intrinsic) volume'' to mean either the actual volume form $\omega_M$ or the map $M \mapsto \omega_M$.

\begin{example}
For any equiregular (sub)-Riemannian structure $(M,\distr,\metr)$, we can consider Popp's volume $\popp_M$. As a consequence of Proposition~\ref{p:volumepres}, this is an intrinsic definition of volume. Restricted to Riemannian structures, this definition of volume is the classical, Riemannian one.
\end{example}

Some intrinsic volumes are more intrinsic than others. Since the nilpotent approximation is the first order approximation of the (sub)-Riemannian structure at a point $q$, particularly simple definition of volumes are those that, loosely speaking, depend only on the metric invariants of the nilpotent approximation. 

We want to make the above statement more precise. To to this, we must go back to the very definition of nilpotent approximation. Recall that there is no canonical way to identify $T_q M$ with the nilpotentization $\hat{M}_q$. Still, as an immediate consequence of Lemma~\ref{l:mont2} we can identify the top wedge product of these vector spaces.

\begin{cor}
Let $(M,\distr,g)$ a (sub)-Riemannian manifold, $q \in M$ and $\hat{M}_q$ its nilpotent approximation at $q$. Then there exists a canonical map $\theta_q^* : (\wedge^n T_q M)^* \to (\wedge^n \hat{M}_q)^*$.
\end{cor}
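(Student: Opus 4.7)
The plan is a direct appeal to Lemma \ref{l:mont2}. The key observation is that the distribution provides a natural flag on $T_qM$: since equiregularity is a standing hypothesis whenever the nilpotentization is defined, the chain
\begin{equation}
\{0\} \subset \distr_q \subset \distr_q^2 \subset \cdots \subset \distr_q^m = T_qM
\end{equation}
is a genuine flag of linear subspaces $F^0 \subset F^1 \subset \cdots \subset F^m = E$ in the sense of the lemma, with $E = T_qM$. By the very definition of the nilpotentization, the associated graded vector space coincides with $\hat{M}_q$ as a vector space:
\begin{equation}
\gr(F) = \distr_q \oplus \distr_q^2/\distr_q \oplus \cdots \oplus \distr_q^m/\distr_q^{m-1} = \hat{M}_q.
\end{equation}
Here only the underlying vector space structure of $\hat{M}_q$ is needed; the Lie bracket and the group structure on $\hat{M}_q$ play no role.

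First I would record the setup above and stress that the identification $\gr(F) = \hat{M}_q$ involves no choices (it is a tautology from the construction in Section \ref{s-intrinsic}). Then Lemma \ref{l:mont2}, applied to this flag, directly produces a canonical isomorphism $\theta_q \colon \wedge^n T_qM \to \wedge^n \hat{M}_q$. Finally, since both $\wedge^n T_qM$ and $\wedge^n \hat{M}_q$ are one-dimensional and $\theta_q$ is an isomorphism, dualising yields a canonical map between $(\wedge^n T_qM)^*$ and $(\wedge^n \hat{M}_q)^*$; the direction is a matter of convention and may be read as $\omega \mapsto \omega \circ \theta_q$, consistently with the formula $\popp_q = \theta_q^*(\omega_q)$ used in \eqref{eq:popppoint}.

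The entire non-trivial content has already been established in Lemma \ref{l:mont2}, namely that $\theta_q$ does not depend on the choice of an adapted basis of the flag (two such bases being related by a block upper triangular change of basis whose top exterior product has determinant $1$ times the determinant on the graded side). I would not reprove this. Consequently I expect no real obstacle: the corollary amounts to observing that the flag defining the nilpotentization is exactly the flag to which the lemma applies, and unwinding the one-line dualisation.
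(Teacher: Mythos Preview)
Your proposal is correct and follows exactly the paper's approach: the paper's proof is a single sentence stating that the map is the dual of the $\theta$ from Lemma~\ref{l:mont2}, applied with $E = T_qM$ and $F^i = \distr_q^i$. Your version is simply a more explicit unpacking of that sentence, including the observation that $\gr(F) = \hat{M}_q$ by definition and that the direction of the dual map is a convention (indeed, the corollary's stated direction is opposite to the one actually used in Eq.~\eqref{eq:popppoint}, but this is harmless since $\theta_q$ is an isomorphism of one-dimensional spaces).
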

\begin{proof}
This map is just the dual map of $\theta$ defined in the proof of Lemma~\ref{l:mont2}, where $E = T_q M$ and $F^i = \distr^i_q$.
\end{proof}
\begin{rmk}
In other terms the map $\theta_q$ canonically identifies parallelotopes living in $T_q M$ and those living in $\hat{M}_q = T_0 \hat{M}_q$. Notice that the (sub)-Riemannian metric does not play any role in the definition of $\theta_q$.
\end{rmk}

\begin{definition}
An intrinsic definition of volume $\omega$ is \emph{N-intrinsic} if, for any (sub)-Riemannian manifold $(M,\distr,\g)$ and any $q \in M$ the following diagram commutes
\begin{displaymath}
\begin{CD}
M @>{\omega}>> \omega_M(q) \\
@V{\mathrm{nil}_q}VV @VV{\theta^*_q}V\\
\hat{M}_q @>>\omega > \omega_{\hat{M}_q}(0) 
\end{CD}
\end{displaymath}
where $\mathrm{nil}_q$ associates with any (sub)-Riemannian manifold its nilpotent approximation at $q$.
\end{definition}

In other words, an intrinsic definition of volume is N-intrinsic if at any $q$ the volume form $\omega_M(q)$ agrees with the nilpotent volume form $\omega_{\hat{M}_q}(0)$ under the identification given by $\theta_q$.

\begin{example}
Let $(M,\distr,\g)$ be a Riemannian manifold ($\distr = TM$ and $\g$ is a Riemannian metric). The Riemannian volume on $M$ is the volume form $\omega_M$ such that, for any $q \in M$ and any oriented orthonormal parallelotope $v_1 \wedge \dots \wedge v_n$ at $q$ gives $\omega_M(q)(v_1 \wedge \dots \wedge v_n) = 1$.
We prove that this definition of volume is N-intrinsic. In this case, for any $q \in M$, the nilpotent approximation $\hat{M}_q = T_q M$ with the scalar product given by $\g_q$. Then the map $\theta_q$ is just the identity map between $T_q M$ and $\hat{M}_q$ (that are indeed the same vector space). Thus
\begin{equation}
\theta_q (v_1 \wedge \dots \wedge v_n) = v_1 \wedge \dots \wedge v_n,
\end{equation}
where the right hand side is seen as an element of $\wedge^n \hat{M}_q = \wedge^n (T_0 \hat{M}_q)$. Clearly $v_1 \wedge \dots \wedge v_n$ is an orthonormal parallelotope either when seen as an element of $\wedge^n (T_q M)$ or as an element of $\wedge^n \hat{M}_q$. By definition $\omega_M(q)(v_1 \wedge \dots \wedge v_n) =1$ and also $\omega_{\hat{M}_q}(0)(\theta (v_1 \wedge \dots \wedge v_n)) = 1$. Thus the Riemannian volume is N-intrinsic.
\end{example}

\begin{example}
Let $(M,\distr,g)$ be a Riemannian manifold ($\distr = TM$ and $g$ is a Riemannian metric). Let $\kappa : M \to \R$ be the scalar curvature function. We give a definition of volume $\omega$ as follows. For any $q \in M$ and any oriented normal parallelotope $v_1\wedge \dots \wedge v_n$ at $q$:
\begin{equation}
\omega_M(q)(v_1 \wedge \ldots \wedge v_n) = 1 + \kappa(q)^2.
\end{equation}
One can check that this is an intrinsic definition of volume (due to the fact that $\kappa$ is preserved by isometries). However, this is not N-intrinsic. In fact, without going into the details of the identification $\theta_q$ as above, we have that for any $q$, $\hat{M}_q$ is a flat Riemannian manifold (a vector space with inner product) thus $\hat{\kappa} = 0$. Then
\begin{equation}
\omega_{\hat M_q}(0)( \theta_q (v_1 \wedge \dots \wedge v_n)) = \omega_{\hat M_q}(0)(v_1 \wedge \ldots \wedge v_n) = 1 \neq 1+ \kappa(q)^2 = \omega_M(q)(v_1\wedge \dots \wedge v_n).
\end{equation}
\end{example}

\begin{example}
Popp's volume is N-intrinsic by construction. In fact it defined precisely by requiring the commutativity of the diagram above (see Section~\ref{s:Popp}).
\end{example}

\begin{example}
The spherical Hausdorff volume is N-intrinsic. In fact, in \cite{ABB-Hausdorff}, is is proved that the spherical Hausdorff volume is absolutely continuous w.r.t. any smooth volume form. In particular, the Radon-Nykodim derivative of the spherical Hausdorff volume w.r.t. Popp's volume is proportional to the left-invariant Haar measure of the unit ball in the nilpotent approximation. This function, in general, is not smooth, but only continuous.
\end{example}

We collect the results from these examples in the following propositions.
\begin{prop}\label{p:poppintrinsic}
For any (sub)-Riemannian structure $(M,\distr,\metr)$, let $\popp_M$ be Popp's volume. Then the definition of volume $M \mapsto \popp_M$ is N-intrinsic.
\end{prop}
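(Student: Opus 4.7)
The plan is to unfold both paths of the defining diagram and verify they coincide essentially by construction. The upper--right path at $q$ produces $\popp_M(q)$ which, by Equation~\eqref{eq:popppoint}, equals $\theta_q^*(\omega_q)$, where $\omega_q\in(\wedge^n\hat{M}_q)^*$ is the canonical unit-volume form assembled from the inner products on the orthogonal graded sum $\hat{M}_q=\bigoplus_i\distr_q^i/\distr_q^{i-1}$. So commutativity reduces to identifying $\popp_{\hat{M}_q}(0)$ with $\omega_q$ as elements of $(\wedge^n\hat{M}_q)^*$; equivalently, one must show that when the left vertical arrow of the diagram is the nilpotent approximation of a Carnot group, the outer square collapses.

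First I would observe that $\hat{M}_q$, viewed as a left-invariant (sub)-Riemannian Carnot group, is its own nilpotent approximation at the origin. Concretely, the tangent space $T_0\hat{M}_q$ is canonically the graded Lie algebra $\hat{M}_q$, so the flag $\{\distr_0^i\}_i$ splits canonically via the grading $\distr_0^i=\mathfrak{g}_1\oplus\dots\oplus\mathfrak{g}_i$. The non-canonical adapted-basis map $\widehat\theta$ of Lemma~\ref{l:mont2} can therefore be taken to be the identity, which makes $\theta_0:\wedge^n T_0\hat{M}_q\to\wedge^n\widehat{(\hat{M}_q)}_0$ the identity as well. Consequently $\popp_{\hat{M}_q}(0)=\theta_0^*(\tilde\omega_0)=\tilde\omega_0$, where $\tilde\omega_0$ is the canonical unit-volume on the graded sum of $\hat{M}_q$ at~$0$.

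Next I would check that $\tilde\omega_0=\omega_q$ layer by layer. The bottom layer is $\distr_0=\distr_q$ with inner product $\g_q$, identical in the two constructions by the very definition of the left-invariant extension on $\hat{M}_q$. The higher-layer inner products on $\distr_0^i/\distr_0^{i-1}$ are obtained via Lemma~\ref{l:mont1} from the surjections $\pi_i$ of Equation~\eqref{eq:pimap}, and these depend only on the graded Lie algebra structure; but the graded Lie algebra of $\hat{M}_q$ at $0$ agrees tautologically with that of $M$ at $q$, as this is exactly how the nilpotent approximation is defined. So the orthogonal direct sum inner product on $\hat{M}_q$ is the same in both cases, giving $\tilde\omega_0=\omega_q$. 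Pulling back by $\theta_q^*$ then yields $\theta_q^*(\popp_{\hat{M}_q}(0))=\theta_q^*(\omega_q)=\popp_M(q)$, which is the commutativity to be proved.

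The only obstacle is bookkeeping: keeping straight the various canonical identifications, namely $\hat{M}_q\simeq T_0\hat{M}_q$, $\hat{M}_q\simeq\widehat{(\hat{M}_q)}_0$, and the agreement of the graded Lie algebra of the Carnot group with the associated graded of its own flag. No geometric input beyond the construction of $\popp$ in Section~\ref{s:Popp} and this self-similarity of Carnot groups is required.
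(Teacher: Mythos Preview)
Your proof is correct and follows the same approach as the paper, which simply asserts that Popp's volume is N-intrinsic ``by construction'' since Eq.~\eqref{eq:popppoint} is precisely the commutativity of the defining diagram. You have carefully unpacked what this means---in particular, the self-similarity of Carnot groups under nilpotentization and the layer-by-layer identification of the induced inner products---whereas the paper leaves these verifications implicit.
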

\begin{prop}\label{p:riemannintrinsic}
For any Riemannian structure $(M,\g)$, let $\mathcal{R}_M$ be the Riemannian volume. Let $f \in C^\infty(M)$ be any function invariant by isometries. Then the definition of volume (for Riemannian manifolds) $M \mapsto f \mathcal{R}_M$ is N-intrinsic if and only if $f$ is constant.
\end{prop}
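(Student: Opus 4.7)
The plan is to reduce N-intrinsicness of the volume $\omega_M = f\,\mathcal{R}_M$ to a pointwise algebraic identity on $f$, and then use the highly symmetric structure of the Riemannian nilpotent approximation (flat Euclidean space) to force $f$ to be constant.

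First I would unpack the commutative diagram defining an N-intrinsic volume. The preceding example shows that $\mathcal{R}$ is itself N-intrinsic, i.e.\ $\theta_q^*(\mathcal{R}_M(q)) = \mathcal{R}_{\hat{M}_q}(0)$, and since $\theta_q^*$ is linear, scaling gives
\begin{equation*}
\theta_q^*\bigl(f(q)\,\mathcal{R}_M(q)\bigr) = f(q)\,\mathcal{R}_{\hat{M}_q}(0).
\end{equation*}
Denoting by $f_{\hat{M}_q}$ the function associated by the assignment $f$ to the Riemannian manifold $\hat{M}_q$, the N-intrinsic diagram therefore commutes if and only if
\begin{equation*}
f(q) = f_{\hat{M}_q}(0) \qquad \text{for every } q \in M \text{ and every Riemannian } M.
\end{equation*}

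Next I would exploit the triviality of the Riemannian nilpotentization. For $(M,\g)$ of dimension $n$, the nilpotent approximation at any $q$ is the abelian Lie group $\R^n$ equipped with the translation-invariant inner product $\g_q$; up to isometry, this is the standard Euclidean space $\mathbb{E}^n$, whose isometry group acts transitively. The isometry-invariance of the assignment $f$ then forces $f_{\hat{M}_q}$ to be constant on $\hat{M}_q$. Moreover, since for any two Riemannian manifolds of the same dimension the nilpotent approximations at any choice of base points are mutually isometric, the value $f_{\hat{M}_q}(0)$ equals a single universal constant $c_n$ depending only on $n$.

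Combining the two observations yields the equivalence. For $(\Leftarrow)$, if $f \equiv c_n$ on every Riemannian manifold of dimension $n$, then $f(q) = c_n = f_{\hat{M}_q}(0)$, the pointwise condition is trivially met, and $f\,\mathcal{R}$ is N-intrinsic. For $(\Rightarrow)$, if $f\,\mathcal{R}$ is N-intrinsic, the pointwise identity together with the previous paragraph gives $f(q) = c_n$ for all $q \in M$, whence $f$ is constant. The argument is essentially a direct unpacking of the definitions; the only mild care needed is to invoke isometry-invariance of $f$ across different manifolds (rather than just within a single $M$) through the universal model $\mathbb{E}^n$, and this is the only point one might call the main (minor) subtlety.
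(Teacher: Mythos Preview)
Your argument is correct and follows the same approach the paper indicates (the paper does not give a separate proof but ``collects the results from the examples''): the N-intrinsic condition reduces to $f(q)=f_{\hat M_q}(0)$, and since every Riemannian nilpotent approximation is flat $\mathbb{E}^n$ with transitive isometry group, $f_{\hat M_q}$ is a universal constant $c_n$. You have in fact made explicit the full ``if and only if'' argument that the paper only sketches via the two examples (the Riemannian volume and the $1+\kappa^2$ counterexample).
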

In other words, up to constant rescaling, the Riemannian one is the unique N-intrinsic definition of volume for Riemannian manifolds. This is due to the fact that the nilpotent approximation of Riemannian manifolds have no non-trivial metric invariants. In the sub-Riemannian case, this is not always true, due to the well-known presence of moduli for their nilpotent approximations. We close this section by discussing class of (sub)-Riemannian structures admitting a unique N-intrinsic definition of volume.

\subsubsection{Equi-nilpotentizable structures}

Equi-nilpotentizable (sub)-Riemannian structures have the same nilpotent approximation at any point.
\begin{definition}
A sub-Riemannian structure $(M,\distr,\g)$ is \emph{equi-nilpotentizabile} if for any $q,p \in M$ the nilpotent approximations $\hat{M}_q$ and $\hat{M}_p$ are isometric.
\end{definition}
The next theorem generalizes Proposition~\ref{p:riemannintrinsic} to equi-nilpotentizable structures (including Riemannian ones).
\begin{theorem}\label{t:equinilp}
Let $\omega$ and $\omega'$ two N-intrinsic definitions of volume. Then for any (sub)-Riemannian structure $(M,\distr,\g)$ there exists a smooth function $c_M :M \to \R$ such that $\omega_M = c_M \omega'_M$. The value of $c_M$ at $q$ depends only on the isometry class of the nilpotent approximation of the structure at $q$. In particular, for equi-nilpotentizable sub-Riemannian manifolds, any N-intrinsic definition of volume agrees up to a constant.
\end{theorem}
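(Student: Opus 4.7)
The plan is to reduce the statement to a pointwise linear-algebra observation: the top wedge power of any finite-dimensional vector space is one-dimensional, so any two non-zero volume forms at a point differ by a scalar. The N-intrinsic condition then transports this scalar back and forth through the canonical identification $\theta_q^*$, and the isometry-invariance of both volumes forces the scalar to be a function of the isometry type of the nilpotent approximation at $q$.

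More concretely, first I would fix $q \in M$ and consider the nilpotent approximation $\hat{M}_q$ with its two associated volume forms $\omega_{\hat{M}_q}$ and $\omega'_{\hat{M}_q}$. Because $(\wedge^n T_0 \hat{M}_q)^*$ is one-dimensional, there is a unique scalar $c_M(q) \in \R$ such that
\begin{equation}
\omega_{\hat{M}_q}(0) = c_M(q)\, \omega'_{\hat{M}_q}(0).
\end{equation}
Applying the canonical dual map $\theta_q^*$ and invoking the commutative diagram in the definition of an N-intrinsic volume gives $\omega_M(q) = c_M(q)\, \omega'_M(q)$ as elements of $(\wedge^n T_q M)^*$. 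Since $\omega_M$ and $\omega'_M$ are smooth volume forms on $M$ and $\omega'_M$ is nowhere vanishing, the quotient $c_M = \omega_M/\omega'_M$ defines a smooth function on $M$, establishing the first assertion.

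Next I would show that $c_M(q)$ depends only on the isometry class of $\hat{M}_q$. Given any isometry $\psi : \hat{M}_q \to \hat{M}_p$ between nilpotent approximations, composing with a left translation of the Carnot group $\hat{M}_p$ (which is itself a sub-Riemannian isometry) we may arrange $\psi(0) = 0$. Since $\omega$ and $\omega'$ are intrinsic definitions of volume, they are preserved by $\psi$, so $\psi^* \omega_{\hat{M}_p} = \omega_{\hat{M}_q}$ and $\psi^* \omega'_{\hat{M}_p} = \omega'_{\hat{M}_q}$. Pulling back the pointwise identity at $0$ then yields $c_M(q) = c_M(p)$ whenever $\hat{M}_q$ and $\hat{M}_p$ are isometric. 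In the equi-nilpotentizable case this applies for every pair of points, so $c_M$ is locally constant; by connectedness of $M$ it is a global constant.

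The main technical point is the use of left translation to normalize the isometry so that $0 \mapsto 0$: the canonical identification $\theta_q^*$ is only between $\wedge^n T_q M$ and $\wedge^n T_0 \hat{M}_q$, so one really needs the comparison of the two nilpotent volumes at the origin. Apart from this, the argument is essentially formal, relying only on the one-dimensionality of the fiber of the determinant line bundle and on the two defining properties (intrinsicness and N-intrinsicness) of $\omega$ and $\omega'$.
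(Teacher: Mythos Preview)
Your proof is correct and follows essentially the same route as the paper: define the ratio $c_M=\omega_M/\omega'_M$, use the N-intrinsic diagram to identify $c_M(q)$ with the corresponding ratio on $\hat{M}_q$ at the origin, and then use that left translations (and hence isometries composed with them) of the Carnot group $\hat{M}_q$ preserve both intrinsic volumes. The only cosmetic difference is that the paper packages the left-translation step into a separate lemma (``on any left-invariant structure the ratio $c$ is constant and depends only on the isometry class''), whereas you argue directly at the origin after normalizing $\psi(0)=0$; the content is the same.
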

\begin{rmk}
By Proposition~\ref{p:poppintrinsic} Popp's one is an N-intrinsic definition of volume. Then, for equi-nilpotentizable (sub)-Riemannian structures, any N-intrinsic volume $M \mapsto \omega_M$ gives, up to a constant, Popp's one.\end{rmk}
\begin{proof}
Assume that there are two N-intrinsic definitions of volume $\omega$ and $\omega'$. Then, to any sub-Riemannian manifold $M$ we associate a smooth, never vanishing function:
\begin{equation}
c_M:= \omega_M/\omega'_M : M \to \R. 
\end{equation}
For any intrinsic definition of volume $\omega$ and isometry $\phi: M \to N$ of (sub)-Riemannian manifolds :
\begin{equation}
\phi^* \omega_N = \omega_M.
\end{equation}
Then, if we have two intrinsic definition of volumes $\omega$ and $\omega'$ we get, for isometric structures:
\begin{equation}\label{eq:isometric}
c_M = \frac{\omega_M}{\omega_M'} = \frac{\phi^* \omega_N}{\phi^* \omega_N'} = c_N \circ \phi.
\end{equation}
\begin{lemma}\label{l:constantf}
Let $(M,\distr,\g)$ be a left-invariant structure (see Definition~\ref{d:left-invariant}. Then $c_M$ is constant and depends only on the isometry class of $M$. Namely, if $(M,\distr_M,\g_M)$ and $(N,\distr_N,\g_N)$ are two isometric left-invariant structures, then $c_M = c_N$.
\end{lemma}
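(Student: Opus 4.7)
The plan is to exploit the transitive action of left translations by isometries on a left-invariant (sub)-Riemannian structure, combined with the transformation rule \eqref{eq:isometric} for the ratio of two N-intrinsic volumes.

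First, unwrap the definition: a left-invariant (sub)-Riemannian structure lives on a Lie group $G$, and for every $g \in G$ the left-translation $L_g : G \to G$ is a (sub)-Riemannian isometry of $(M,\distr,\g)$ onto itself. Applied to the self-isometry $\phi = L_g$, the identity \eqref{eq:isometric} gives
\begin{equation}
c_M = c_M \circ L_g, \qquad \forall g \in G.
\end{equation}
Since $\{L_g\}_{g \in G}$ acts transitively on $M$, fixing any base point $q_0$ and varying $g$ we get $c_M(q) = c_M(q_0)$ for all $q$, so $c_M$ is a constant function on $M$.

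Next, suppose $(M,\distr_M,\g_M)$ and $(N,\distr_N,\g_N)$ are two isometric left-invariant structures via an isometry $\phi : M \to N$. Applying \eqref{eq:isometric} to this $\phi$ yields $c_M = c_N \circ \phi$ on $M$. But by the first step both $c_M$ and $c_N$ are constants, so this equality of constants gives $c_M = c_N$. Thus the common constant value depends only on the isometry class of the left-invariant structure.

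There is essentially no hard step here: the entire argument is a direct application of \eqref{eq:isometric} to, respectively, the transitive family of self-isometries $L_g$ and the hypothesized isometry between $M$ and $N$. The only thing to make sure of is that the equality \eqref{eq:isometric} was stated for \emph{any} (sub)-Riemannian isometry, which it is — so it applies both to automorphisms $L_g$ and to cross-structure isometries $\phi : M \to N$.
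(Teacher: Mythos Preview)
Your proof is correct and follows essentially the same approach as the paper: apply \eqref{eq:isometric} first with $\phi = L_g$ to obtain left-invariance (hence constancy) of $c_M$, and then with the given isometry $\phi:M\to N$ to conclude $c_M = c_N$. The paper's proof is just a more condensed version of exactly this argument.
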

\begin{proof}
If in Eq.~\eqref{eq:isometric} we set $M=N$ and $\phi = L_q$, for $q \in M$, we get that $c_M$ is a left-invariant function, and then constant. Moreover, if $M$ and $N$ are two isometric left-invariant structures, then the two constants $c_M$ and $c_N$ must be equal.
\end{proof}

Let us go back to $(M,\distr,\g)$ and the two N-intrinsic definitions of volumes $\omega$ and $\omega'$. By definition of N-intrinsic, we have:
\begin{equation}
c_M(q) = \frac{\omega_M(q)}{\omega_M'(q)} = \frac{\omega_{\hat{M}_q}(0)}{\omega'_{\hat{M}_q}(0)} = c_{\hat{M}_q}(0), \qquad \forall q \in M.
\end{equation}
The structure on $\hat{M}_q$ is left-invariant. Then, by Lemma~\ref{l:constantf}, the function $c_{\hat{M}_q}$ depends only on the isometry class of $\hat{M}_q$ (which is the same for all $q \in M$). So $c_M$ is a constant that depends only on the isometry class of $\hat{M}_q$.
\end{proof}

\subsubsection{Homogeneous (sub)-Riemannian structures}
Another class of structures that admit a unique N-intrinsic definition of volume (actually, a unique intrinsic definition of volume) is the following.

\begin{definition}
A (sub)-Riemannian structure $(M,\distr,\metr)$ is \emph{homogenous} if the group $\mathrm{Iso}(M)$ of (sub)-Riemannian isometries of $M$ acts transitively.
\end{definition}

Indeed homogenous structures are equi-nilpotentizable, thus Theorem~\ref{t:equinilp} applies and any two N-intrinsic definition of volumes are proportional (and proportional to Popp's one). Still, something stronger holds. In fact, for these structures, Popp's one is the unique volume form (up to scaling) preserved by (sub)-Riemannian isometries (see \cite[Proposition 6]{nostropopp}). Then we have the following corollary.

\begin{cor}
For homogeneous (sub)-Riemannian structures, any two intrinsic (definitions of) volumes are N-intrinsic, and proportional to Popp's one.
\end{cor}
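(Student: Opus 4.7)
The strategy is to leverage the uniqueness of Popp's volume among isometry-invariant volumes on homogeneous structures, as asserted in the cited \cite[Proposition~6]{nostropopp}, together with the N-intrinsic character of Popp already established in Proposition~\ref{p:poppintrinsic}.

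First I would record a preliminary observation: homogeneous (sub)-Riemannian structures are equi-nilpotentizable. Indeed, an isometry $\phi\in\mathrm{Iso}(M)$ with $\phi(p)=q$ induces a graded Lie algebra isomorphism $\hat{M}_p\to\hat{M}_q$ preserving the inner product on the first layer, hence an isometry of the nilpotent approximations; since $\mathrm{Iso}(M)$ acts transitively, all $\hat{M}_q$ lie in one isometry class. In particular Theorem~\ref{t:equinilp} is available.

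Next I would derive the proportionality to Popp. Let $\omega$ be any intrinsic definition of volume, and $M$ a homogeneous (sub)-Riemannian manifold. Applying the defining property of intrinsicness to each $\phi\in\mathrm{Iso}(M)$ viewed as an isometry $M\to M$, we obtain $\phi^*\omega_M=\omega_M$. Thus $\omega_M$ is a smooth volume form invariant under the full, transitively-acting group $\mathrm{Iso}(M)$. By \cite[Proposition~6]{nostropopp} the space of such invariant volume forms is one-dimensional and spanned by $\popp_M$, so $\omega_M=c_M\,\popp_M$ for a single positive constant $c_M$. This establishes the ``proportional to Popp'' part of the statement, and applied to the other intrinsic volume shows that any two intrinsic volumes agree up to a constant on each homogeneous $M$.

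Finally I would argue N-intrinsicness. Popp's volume is N-intrinsic by Proposition~\ref{p:poppintrinsic}, so the diagram $\theta_q^*\popp_M(q)=\popp_{\hat{M}_q}(0)$ commutes. The nilpotent approximation $\hat{M}_q$ is itself homogeneous (as a Carnot group), so the previous step gives $\omega_{\hat{M}_q}=c_{\hat{M}_q}\popp_{\hat{M}_q}$ for a constant $c_{\hat{M}_q}$. Substituting into the N-intrinsic diagram for $\omega$ reduces the required identity to $c_M=c_{\hat{M}_q}$. I would obtain this by applying Theorem~\ref{t:equinilp} to the pair of intrinsic (hence, once this is established, N-intrinsic) volumes $\omega$ and $\popp$ on the equi-nilpotentizable manifold $M$: the theorem asserts that the ratio $c_M$ depends only on the isometry class of $\hat{M}_q$, and since the ``reference'' Carnot group $\hat{M}_q$ shares this same isometry class, both $c_M$ and $c_{\hat{M}_q}$ equal the common constant determined by that class.

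The only delicate point is the last matching of constants $c_M=c_{\hat{M}_q}$, which requires carefully combining the uniqueness in \cite[Proposition~6]{nostropopp} (applied to both $M$ and the Carnot group $\hat{M}_q$) with the content of Theorem~\ref{t:equinilp}; everything else is a direct transcription of the definitions.
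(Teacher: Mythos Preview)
Your argument for proportionality to Popp is exactly the paper's: an intrinsic volume is, by definition, preserved by every isometry, and on a homogeneous structure \cite[Proposition~6]{nostropopp} forces any such volume to be a constant multiple of $\popp_M$. That part is fine and complete.

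The gap is in your treatment of ``N-intrinsic.'' You try to verify the commutative diagram for the \emph{definition} $\omega$ itself, which reduces to showing $c_M=c_{\hat M_q}$, and then you appeal to Theorem~\ref{t:equinilp}. But that theorem is stated for pairs of \emph{N-intrinsic} volumes; its proof uses N-intrinsicness essentially in the step $c_M(q)=\omega_M(q)/\omega'_M(q)=\omega_{\hat M_q}(0)/\omega'_{\hat M_q}(0)=c_{\hat M_q}(0)$. Invoking it to establish that $\omega$ is N-intrinsic is circular, as you yourself flag. The auxiliary Lemma~\ref{l:constantf} uses only intrinsicness, but it compares \emph{isometric} left-invariant structures, and $M$ is generally not isometric to $\hat M_q$, so it does not close the gap either. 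In fact, without extra hypotheses one can cook up intrinsic definitions (e.g.\ $\omega_M=f([M])\,\popp_M$ with $f$ equal to $1$ on Carnot groups and $2$ elsewhere) for which $c_M\neq c_{\hat M_q}$.

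The paper is not claiming the strict diagrammatic property for the map $M\mapsto\omega_M$. Its one-line argument (the paragraph preceding the corollary) is simply: on a homogeneous $M$, any intrinsic volume equals $c_M\popp_M$, and $c_M\popp$ is itself an N-intrinsic definition of volume. Hence, restricted to homogeneous structures, every intrinsic volume \emph{coincides with} an N-intrinsic one, and any two are proportional. No matching of constants between $M$ and $\hat M_q$ is needed for that reading.
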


In particular this is true for left-invariant (sub)-Riemannian structures on Lie groups. 
\begin{definition}\label{d:left-invariant}
Let $M$ be a Lie group, and, for $q \in M$, let $L_q: M \to M$ denote the left multiplication. A (sub)-Riemannian structure $(M,\distr,\metr)$ is \emph{left-invariant} if $L_q : M \to M$ is an isometry for any $q \in M$.
\end{definition}
Since, on left-invariant structures, Popp volume is left-invariant we have the following.

\begin{cor}\label{c:popp=haar}
For left-invariant (sub)-Riemannian structures, any two intrinsic (definitions of) volumes are N-intrinsic, and proportional to Haar left-invariant volume.
\end{cor}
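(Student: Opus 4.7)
The plan is to reduce this corollary to the preceding Corollary on homogeneous structures together with the uniqueness of left-invariant volumes on a Lie group (i.e.\ of Haar measure up to scaling).

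First I would observe that any left-invariant (sub)-Riemannian structure $(M,\distr,\metr)$ is homogeneous: for any two points $p,q\in M$, the left translation $L_{qp^{-1}}:M\to M$ is a (sub)-Riemannian isometry by Definition~\ref{d:left-invariant}, and it maps $p$ to $q$, so $\mathrm{Iso}(M)$ acts transitively. Therefore the hypotheses of the preceding corollary apply, and any two intrinsic definitions of volume on $M$ are N-intrinsic and proportional to Popp's volume $\popp_M$.

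Next I would show that Popp's volume on a left-invariant structure is itself proportional to the Haar measure. By Proposition~\ref{p:volumepres}, for every $q\in M$ the isometry $L_q$ satisfies $L_q^*\popp_M=\popp_M$, which is exactly the statement that $\popp_M$ is left-invariant. Since on a Lie group the space of left-invariant volume forms is one-dimensional, $\popp_M$ equals a nonzero constant multiple of the left-invariant Haar volume $H$.

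Combining the two steps: any intrinsic volume is proportional to $\popp_M$, and $\popp_M$ is proportional to $H$, hence every intrinsic volume is proportional to the Haar volume, and by the preceding corollary each such volume is automatically N-intrinsic. The only non-routine ingredient is the link between isometry invariance and left invariance, which is immediate once one notices that left translations are themselves isometries; so I do not expect any real obstacle — the whole argument is a direct assembly of Proposition~\ref{p:volumepres}, the preceding corollary, and uniqueness of Haar measure.
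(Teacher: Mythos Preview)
Your proposal is correct and follows exactly the approach the paper takes: the paper observes that left-invariant structures are a particular case of homogeneous ones (so the preceding corollary applies), and then notes that Popp's volume is left-invariant, hence proportional to Haar. You have simply spelled out the details of this one-line justification.
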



\section{The macroscopic Laplacian}\label{s:macro}

Let $(M,\distr,\metr)$ a sub-Riemannian structure and fix a volume form $\omega$ (not necessarily intrinsic or N-intrinsic). In this setting, we have a well defined notion of gradient and divergence (see Definitions~\ref{d:grad} and~\ref{d:diver}). The distribution is not assumed to be equiregular, unless one explicitly chooses $\omega = \popp$.

\begin{definition}
The \emph{macroscopic Laplacian} (w.r.t. $\omega$) is the differential operator
\begin{equation}
\Delta_\omega = \div_\omega\circ\grad.
\end{equation}
\end{definition}
This is a second order differential operator, symmetric on $C^\infty_0(M)$ w.r.t.\ the $L^2$ product induced by the measure $\omega$. A classical result by Strichartz (see \cite{strichartz,strichartzerrata}) is the following:
\begin{theorem}\label{t:selfad}
If $(M,\distr,\g)$ is complete as a metric space, then $\Delta_\omega$ is essentially self-adjoint on $C_0^\infty(M)$ and the associated heat operator is given by a positive, smooth, symmetric heat kernel.
\end{theorem}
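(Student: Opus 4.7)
The plan is to handle the three assertions in order: essential self-adjointness is the core, and once it is in hand the heat kernel properties follow from hypoellipticity and standard semigroup theory.

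First I would record the symmetry of $\Delta_\omega$ on $C_0^\infty(M)$, which follows from the Green-type identity
\begin{equation}
\int_M g\,(\Delta_\omega f)\,\omega = -\int_M \metr(\grad f,\grad g)\,\omega = \int_M f\,(\Delta_\omega g)\,\omega, \qquad f,g\in C_0^\infty(M),
\end{equation}
obtained by writing $\Delta_\omega = \div_\omega\circ \grad$ and using the divergence lemma of Section~\ref{s-intrinsic}. This gives closability, and essential self-adjointness reduces to showing that the only $u\in L^2(M,\omega)$ solving $\Delta_\omega^* u = \lambda u$ for some $\lambda\in\mathbb{C}$ with $\mathrm{Im}\,\lambda\neq 0$ is $u\equiv 0$. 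Such a $u$ is automatically smooth because, in a horizontal orthonormal frame, $\Delta_\omega$ is a sum of squares plus a first-order drift, and the H\"ormander condition on $\distr$ makes $\Delta_\omega - \lambda$ hypoelliptic.

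The geometric input I need is a sequence of cutoffs $\chi_n\in C_0^\infty(M)$ with $\chi_n\equiv 1$ on the metric ball $B(q_0,n)$, $\mathrm{supp}\,\chi_n\subset B(q_0,2n)$, and $\|\grad\chi_n\|_\infty\to 0$. Metric completeness plus the sub-Riemannian Hopf--Rinow theorem give compactness of closed balls, and since $d(q_0,\cdot)$ is horizontally $1$-Lipschitz one can take $\chi_n$ of the form $\eta(d(q_0,\cdot)/n)$ after a suitable mollification of the distance function; this is the technical heart of Strichartz's argument. With such cutoffs, integrating by parts in $\int \chi_n^2\bar u\,\Delta_\omega u\,\omega = \lambda \int \chi_n^2|u|^2\,\omega$ and using Young's inequality on the boundary term, the real part produces a uniform bound
\begin{equation}
\int \chi_n^2|\grad u|^2\,\omega \leq C\bigl(1+\|\grad\chi_n\|_\infty^2\bigr)\|u\|_{L^2}^2,
\end{equation}
whence $\grad u\in L^2(M,\omega)$. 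The imaginary part, together with Cauchy--Schwarz, then yields
\begin{equation}
|\mathrm{Im}\,\lambda|\,\|\chi_n u\|_{L^2}^2 \leq 2\|\grad\chi_n\|_\infty\,\|u\|_{L^2}\,\|\grad u\|_{L^2} \longrightarrow 0,
\end{equation}
forcing $u\equiv 0$ and completing essential self-adjointness.

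For the heat kernel, the spectral theorem produces a strongly continuous semigroup $e^{t\Delta_\omega}$ on $L^2(M,\omega)$. Hypoellipticity of $\partial_t - \Delta_\omega$ implies the semigroup is smoothing, so by the Schwartz kernel theorem it is represented by a kernel $p_t\in C^\infty((0,\infty)\times M\times M)$; self-adjointness gives the symmetry $p_t(q,q')=p_t(q',q)$. Positivity follows either from the Beurling--Deny criterion applied to the Dirichlet form $\mathcal{E}(f,g)=\int \metr(\grad f,\grad g)\,\omega$, which is manifestly Markovian, or from the parabolic maximum principle for $\partial_t - \Delta_\omega$. The hard part of the entire argument is the cutoff construction coupled with the two-step bootstrap (real part first to obtain $\grad u\in L^2$, then imaginary part to conclude), which is precisely where completeness enters decisively by ruling out escape to infinity along horizontal curves.
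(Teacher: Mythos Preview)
Your sketch is a faithful outline of Strichartz's argument and is essentially correct; however, the paper itself does not prove this theorem at all. It is stated as ``a classical result by Strichartz'' with a citation to \cite{strichartz,strichartzerrata}, and no proof is given. So there is nothing to compare against in the paper's own text.

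A couple of minor remarks on your write-up, for completeness. First, since $\Delta_\omega$ is non-positive, it suffices to take $\lambda$ with $\mathrm{Re}\,\lambda>0$ (or simply $\lambda=\pm i$) rather than an arbitrary $\lambda$ with nonzero imaginary part; this streamlines the real-part estimate. Second, the cutoff construction is indeed the delicate point: the sub-Riemannian distance is only Lipschitz, not smooth, so one must mollify carefully to produce $\chi_n\in C_0^\infty$ with horizontal gradient bounded by $C/n$; Strichartz handles this, but it deserves a sentence of justification rather than ``suitable mollification''. With those caveats, your argument is the standard one and would be an appropriate expansion of the paper's citation.
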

We provide explicit formulas for $\Delta_\omega$, in terms of orthonormal frames. The proofs are routine computations.
\begin{lemma}
Let $X_1,\ldots,X_k$ be a (local) orthonormal frame for $\distr$ and $X_{k+1},\ldots,X_n$ some complement such that $X_1,\ldots,X_n$ is an oriented local frame for $TM$. Let $\theta \in C^\infty(M)$ defined by $\omega(X_1,\ldots,X_n) = e^\theta$. Then
\begin{equation}
\grad(f) = \sum_{i=1}^k X_i(f) X_i, \qquad f \in C^\infty(M),
\end{equation}
\begin{equation}
\div_\omega (X_i) = \sum_{\alpha=1}^n c_{\alpha i}^\alpha + d\theta (X_i), \qquad X \in \Gamma(TM).
\end{equation}
\end{lemma}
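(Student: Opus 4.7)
The plan is to verify the two formulas separately; both are direct applications of the defining identities for $\grad$ and $\div_\omega$, combined with elementary multilinear algebra in the frame $X_1,\ldots,X_n$.

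For the gradient formula, I would expand $\grad(f) = \sum_{i=1}^{k} a_i X_i$ in the orthonormal horizontal frame (the coefficients must be horizontal by Definition~\ref{d:grad}). Testing Definition~\ref{d:grad} against each $X_j$ for $j=1,\ldots,k$ and using orthonormality $\metr(X_i,X_j)=\delta_{ij}$ gives
\begin{equation}
X_j(f) = d f(X_j) = \metr(\grad(f), X_j) = a_j,
\end{equation}
which identifies $a_j = X_j(f)$ and yields the claimed expression.

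For the divergence formula, I would start from the characterization $\mathcal{L}_{X_i}\omega = \div_\omega(X_i)\,\omega$ and evaluate both sides on the $n$-tuple $(X_1,\ldots,X_n)$. By the Leibniz rule for the Lie derivative of the scalar $\omega(X_1,\ldots,X_n) = e^\theta$,
\begin{equation}
X_i(e^\theta) = (\mathcal{L}_{X_i}\omega)(X_1,\ldots,X_n) + \sum_{\alpha=1}^{n} \omega(X_1,\ldots,[X_i,X_\alpha],\ldots,X_n).
\end{equation}
Expanding $[X_i,X_\alpha]=\sum_{\beta} c_{i\alpha}^{\beta}X_\beta$ and using the total antisymmetry of $\omega$, only the $\beta=\alpha$ summand survives in each term, contributing $c_{i\alpha}^{\alpha}\,e^\theta$. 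The left-hand side is $e^\theta\,d\theta(X_i)$, so dividing by $e^\theta$ gives
\begin{equation}
d\theta(X_i) = \div_\omega(X_i) + \sum_{\alpha=1}^{n} c_{i\alpha}^{\alpha}.
\end{equation}
Finally, the antisymmetry of the structural functions in their lower indices, $c_{i\alpha}^{\alpha} = -c_{\alpha i}^{\alpha}$, rearranges this to $\div_\omega(X_i) = \sum_{\alpha=1}^{n}c_{\alpha i}^{\alpha} + d\theta(X_i)$, as claimed.

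There is no real obstacle here, since both computations are routine; the only minor bookkeeping point is tracking the sign convention in the structural functions (the ordering of indices $c_{\alpha i}^{\alpha}$ versus $c_{i\alpha}^{\alpha}$), which is the sole reason the plus sign appears in the divergence formula rather than a minus.
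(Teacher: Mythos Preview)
Your proof is correct. The paper does not actually give a proof of this lemma; it simply states that ``the proofs are routine computations,'' and what you have written is precisely that routine computation.
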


\begin{prop}
The macroscopic Laplacian w.r.t. the choice of $\omega$ is
\begin{equation}\label{eq:macroformula}
\Delta_{\omega} = \mathrm{div}_\omega \circ \grad = \sum_{i=1}^k X_i^2 + \mathrm{div}_\omega(X_i) X_i = \sum_{i=1}^k X_i^2 + \sum_{i=1}^k \sum_{\alpha=1}^n c_{\alpha i}^\alpha X_i + \grad(\theta).
\end{equation}
where the horizontal gradient $\grad(\theta)$ is seen as a derivation.
\end{prop}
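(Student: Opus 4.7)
The statement follows by direct computation, combining the two explicit formulas from the preceding Lemma via the Leibniz rule for the divergence. The plan is as follows.

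First, I would establish (or invoke) the Leibniz rule for the divergence: for any $f \in C^\infty(M)$ and $X \in \Gamma(TM)$,
\begin{equation}
\div_\omega(fX) = f\,\div_\omega(X) + X(f).
\end{equation}
This is a consequence of Cartan's formula $\mathcal{L}_{fX}\omega = i_{fX}d\omega + d(i_{fX}\omega) = f\,\mathcal{L}_X\omega + df\wedge i_X\omega$, together with the identity $df\wedge i_X\omega = X(f)\,\omega$, valid because $\omega$ is a top form.

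Second, I would substitute the expression $\grad(f) = \sum_{i=1}^k X_i(f)\,X_i$ from the Lemma and apply the Leibniz rule termwise:
\begin{equation}
\Delta_\omega(f) = \div_\omega\!\left(\sum_{i=1}^k X_i(f)X_i\right) = \sum_{i=1}^k \Big(X_i(X_i(f)) + X_i(f)\,\div_\omega(X_i)\Big).
\end{equation}
This already yields the middle expression $\sum_i X_i^2 + \div_\omega(X_i)X_i$ in the statement.

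Third, I would insert the formula $\div_\omega(X_i) = \sum_{\alpha=1}^n c_{\alpha i}^\alpha + d\theta(X_i)$ provided by the Lemma. The first piece contributes $\sum_{i=1}^k\sum_{\alpha=1}^n c_{\alpha i}^\alpha X_i$ directly. For the second piece, I would observe that
\begin{equation}
\sum_{i=1}^k d\theta(X_i)\,X_i(f) = \sum_{i=1}^k X_i(\theta)\,X_i(f) = \grad(\theta)(f),
\end{equation}
where in the last equality I use that $\grad(\theta)=\sum_{i=1}^k X_i(\theta)X_i$, viewed as a derivation on $f$. Combining the three contributions gives the claimed formula. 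There is no genuine obstacle here; the only care needed is to read $\grad(\theta)$ consistently as a horizontal vector field acting as a first-order derivation, rather than as a function, in order to match the final displayed expression.
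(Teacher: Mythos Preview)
Your proposal is correct and is exactly the routine computation the paper has in mind; the paper does not spell out a proof for this proposition, merely stating that the formulas follow from routine computations with the preceding Lemma, which is precisely what you do (Leibniz rule for $\div_\omega$, then substitution of the two Lemma formulas).
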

Notice that, for any choice of $\omega$, the principal symbol of $\Delta_\omega$ is the Hamiltonian function $2H :T^*M \to \R$.

\begin{lemma}[Change of volume]\label{l:changeofvolume} Let $\omega$ be a volume form and $\omega' = e^g \omega$, with $g \in C^\infty(M)$. Then
\begin{equation}
\Delta_{\omega'} = \Delta_\omega + \grad (g),
\end{equation}
where $\grad (g)$ is meant as a derivation.
\end{lemma}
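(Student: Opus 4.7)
The plan is to reduce directly to the change-of-volume formula for the divergence and to use the symmetry of the metric $\metr$ on horizontal vectors.

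First, I would note that the horizontal gradient $\grad$ depends only on $\metr$ and on the distribution $\distr$, not on the choice of volume. Hence for any $\phi \in C^\infty(M)$,
\begin{equation}
\Delta_{\omega'}(\phi) = \div_{\omega'}(\grad(\phi)) \qquad \text{and} \qquad \Delta_{\omega}(\phi) = \div_{\omega}(\grad(\phi)).
\end{equation}

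Next, I would invoke the earlier lemma on the change of divergence under rescaling the volume: with $\omega' = e^g \omega$, since $f = e^g > 0$ and $\log|f| = g$, one has
\begin{equation}
\div_{\omega'}(X) = \div_\omega(X) + X(g), \qquad X \in \Gamma(TM).
\end{equation}
Applying this to $X = \grad(\phi)$ gives
\begin{equation}
\Delta_{\omega'}(\phi) = \Delta_\omega(\phi) + \grad(\phi)(g).
\end{equation}

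Finally, I would show $\grad(\phi)(g) = \grad(g)(\phi)$, which is where the horizontal metric enters. Since both $\grad(\phi)$ and $\grad(g)$ lie in $\Gamma(\distr)$, the definition of horizontal gradient (Definition~\ref{d:grad}) yields
\begin{equation}
\grad(\phi)(g) = dg(\grad(\phi)) = \metr(\grad(g),\grad(\phi)) = \metr(\grad(\phi),\grad(g)) = d\phi(\grad(g)) = \grad(g)(\phi),
\end{equation}
using the symmetry of $\metr$. Substituting this into the previous identity gives $\Delta_{\omega'} = \Delta_\omega + \grad(g)$ as derivations on $C^\infty(M)$, which is the claim. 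There is no real obstacle here: the only subtle point is recognizing that $\grad(g)$ is to be interpreted as the horizontal vector field acting as a first-order derivation, and that the symmetry of $\metr$ makes the two possible interpretations coincide.
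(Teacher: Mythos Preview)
Your proof is correct and takes essentially the same approach as the paper: both reduce to the change-of-divergence identity $\div_{\omega'}(X) = \div_\omega(X) + X(g)$ and apply it with $X = \grad(\phi)$. The only differences are cosmetic---the paper re-derives the divergence formula via the Lie derivative rather than citing the earlier lemma, and leaves the symmetry $\grad(\phi)(g) = \grad(g)(\phi)$ implicit, whereas you spell it out.
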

\begin{proof}
It follows from the change of volume formula for the divergence. In fact, for $X \in \Gamma(TM)$,
\begin{equation}
\div_{\omega'}(X) \omega' = \mathcal{L}_X \omega' = \mathcal{L}_X(e^g \omega) = X(g)e^g \omega + e^g \mathcal{L}_X \omega = (X(g) + \div_\omega(X))\omega'.
\end{equation}
Then $\div_{\omega'}(X) = \div_{\omega}(X) + X(g)$. The statement follows from the definition of $\Delta_\omega = \div_\omega \circ \grad$.
\end{proof}


\section{The microscopic Laplacian}\label{s-luca-formula}

Let $\V$ be a \emph{complement} (for the distribution $\distr$), namely a smooth sub-bundle of $TM$ such that 
\begin{equation}
T_qM = \distr_q \oplus \V_q, \qquad \forall q \in M.
\end{equation}
By duality we have an analogous splitting for the cotangent bundle, namely
\begin{equation}
T_q^*M = \mathbf{h}_q \oplus \mathbf{v}_q, \qquad \forall q \in M,
\end{equation}
where $\mathbf{h}$ (resp. $\mathbf{v}$) denotes the annihilator bundle of $\V$ (resp. of $\distr$). We call $\mathbf{v}$ the space of \emph{vertical covectors} and consequently $\mathbf{h}$ is the space of \emph{horizontal covectors} (see Fig.~\ref{f:verhor}). The Hamiltonian function is a fiber-wise non-negative quadratic form $H_q:=H|_{T_q^*M}$, with $\ker H_q = \mathbf{v}_q$. Then its restriction to $\mathbf{h}_q$ is positive definite, thus is a scalar product on it.
\begin{figure}
\centering
\begin{tikzpicture}[scale=0.9]
\node[draw, thick, trapezium,  trapezium left angle=75, trapezium right angle=-75, minimum height=3cm, minimum width=5cm,
trapezium stretches=true, shape border uses incircle,shape border rotate=0] at (0,0) {};
\draw[thick, -stealth] (0,0) -- (-1,3.5);
\node[draw, cylinder, minimum width=3cm, minimum height=5cm,aspect=4,,shape border rotate=90]   at (10,0) {};
\draw[thick, -stealth] (10,0) -- (10,3.5);
\draw[-triangle 60] (3,0) -- node[above] {``annihilator''} (6,0);
\draw[dashed, rotate around={12:(10,0)}] (10,0) ellipse (1.5cm and 10pt);
\draw[thick, xshift=0.3cm,yshift=0.2cm,rotate around={12:(6,-2)}] (6,-2) -- (12.5,-2) -- (14,0);
\draw[thick, xshift=0.3cm,yshift=0.2cm,rotate around={12:(6,-2)}] (14,0) -- (11.74,0);
\draw[thick, dashed, xshift=0.3cm,yshift=0.2cm,rotate around={12:(6,-2)}] (11.74,0) -- (8.7,0);
\draw[thick, xshift=0.3cm,yshift=0.2cm,rotate around={12:(6,-2)}] (8.7,0) -- (7.5,0) -- (6,-2);
\node at (-0.6,3.5) {$\V_q$};
\node[above] at (1.35,-1.5) {$\distr_q$};
\node at (13,3.5) {$T^*_qM$};
\node at (12,-1.5) {$\cyl_q$};
\node[right] at (11.7,1) {$\mathbf{h}_q = \V_q^\perp$};
\node[right] at (10,3.5) {$\mathbf{v}_q = \distr_q^\perp$};
\node at (2,3.5) {$T_qM$};
\end{tikzpicture}
\caption{Cotangent splitting $T_q^*M = \mathbf{v}_q \oplus \mathbf{h}_q$. Vertical covectors annihilate horizontal vectors ($\mathbf{v}_q = \distr_q^\perp$), and correspond to the trivial geodesic. Horizontal covectors annihilate the transverse vectors ($\mathbf{h}_q = \V^\perp_q$). They correspond to normal geodesics that are the ``most horizontal'' w.r.t. the choice of $\V$.}\label{f:verhor}
\end{figure}
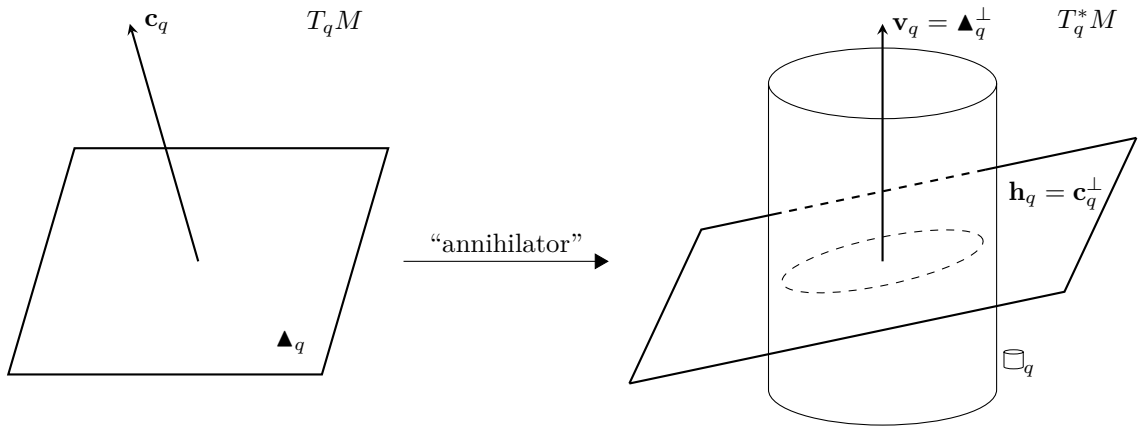
\begin{rmk}
An alternative definition, in the spirit of \cite{montgomerybook,GordLaeOlder,GordLae}, is the following. Extend $\metr$ to a Riemannian metric such that $\V$ and $\distr$ are orthogonal. Then we have a dual co-metric on $T^*M$. This co-metric depends on the choice of the orthogonal extension, but its restriction on $\mathbf{h}$ does not, and induces an Euclidean structure.
\end{rmk}
We obtain a $k-1$-dimensional sphere bundle over $M$ by taking the fiber-wise intersection $\cyl \cap \mathbf{h}$. In fact
\begin{equation}
\cyl_q \cap \mathbf{h}_q =\{\lambda \in T_q^*M \mid 2H(\lambda) =1,\; \lambda \in \mathbf{h}_q\}, \qquad \forall q \in M,
\end{equation}
is the Euclidean sphere $\mathbb{S}^{k-1}_q$ on the vector space $\mathbf{h}_q$ w.r.t. the scalar product $2H|_{\mathbf{h}_q}$.

\subsection{A class of adapted measures}
We define a class of probability measures on $\cyl_q$, obtained as the product of the standard probability measure on $\mathbb{S}^{k-1}_q$ and a probability measure on the complement $\mathbf{h}_q$. We are interested in a sufficiently general model, where \emph{all} geodesics on the cylinder potentially have a non-zero probability density. For this reason we include measures with non-compact support. This is indeed possible, but we need the probability to go to zero sufficiently fast at infinity along the ``non-compact directions'' of $\cyl_q = \mathbb{S}_q^{k-1} \times \mathbf{v}_q$.

\begin{definition}
Let $E$ be a vector space and $\alpha \in \mathbb{N}$. A Borel measure $\mu$ on $E$ is $\alpha$-\emph{decreasing} if any linear function $f : E \to \R$ belongs to $L^\alpha(E,\mu)$.
\end{definition}
If a measure is $\alpha$-\emph{decreasing} then it is $\beta$-\emph{decreasing} for any $\beta \leq \alpha$. A compactly supported probability measure is $\alpha$-decreasing for all $\alpha$. Finally, we notice that one needs to check the condition only for a complete set of linear projections (the functions $(x_1,\ldots,x_m) \mapsto x_m$ in terms of some basis). 

\begin{definition}\label{d:decreasing}
Let $\mu_q$ be a probability measure on $\cyl_q = \mathbb{S}^{k-1}_q \times \mathbf{v}_q$. We say that the collection of measures $\{\mu_q\}_{q \in M}$ is \emph{adapted to the splitting} if there exists a $2$-decreasing probability measure $\mu_{\mathbf{v}_q}$ on $\mathbf{v}_q$ such that
\begin{equation}
\mu_q = \mu_{\mathbb{S}_q^{k-1}} \times \mu_{\mathbf{v}_q}, \qquad \forall q \in M,
\end{equation}
where $\mu_{\mathbb{S}_q^{k-1}}$ denotes the standard uniform measure on $\mathbb{S}_q^{k-1} = \cyl_q \cap \mathbf{h}_q$. Moreover, we assume that if  $f: T^*M \to \R$ is a continuous and fiberwise linear function, the map $q \mapsto \int_{\cyl_q} f^2 \mu_q$ (which is well defined) is continuous.
\end{definition}
\begin{rmk}
As we will see, the choice of $\alpha=2$ is forced by the properties of the exponential map. In the Riemannian case, there is no need of this, since the fibers of the unit tangent bundle are compact. The regularity assumption is needed for the uniform convergence in the next definition.
\end{rmk}

For the next definition recall that, by Lemma~\ref{l:prolong}, there exists $\eps_0 >0$ such that all arc-length parametrized geodesics $\gamma_\lambda(t) = \exp_q(t,\lambda)$, with $\lambda \in \cyl_q$, are well defined for $t \in [0,\eps_0)$.
\begin{definition}\label{Def:MainOperator}
Consider a splitting $TM =\distr \oplus \V$ and some choice of adapted measure $\{\mu_q\}_{q \in M}$. The \emph{microscopic Laplacian} is the differential operator:
\begin{equation}
(L^\V \phi)(q):=\lim_{t\to 0^+} \frac{2 k}{ t^2}\int_{\cyl_q} (\phi(\exp_q(t,\lambda))-\phi(q))\mu_q(\lambda), \qquad \forall q \in M.
\end{equation}
\end{definition}
Surprisingly, this definition does not depend on the choice of the measure adapted to the splitting, and justifies the notation $L^\V$. Moreover, the right hand side converges uniformly on compact sets for $t \to 0$. The proof of these facts are contained in the proof of Theorem~\ref{t:microformula}.
\begin{rmk}
As discussed in Section~\ref{s:intro}, this is the operator associated with the limit of a random walk, where, at each step, normal geodesics are chosen on $\cyl_q$ with the given probability measure $\mu_q$. Our construction generalizes the one given in \cite{GordLaeOlder,GordLae}. The latter can be recovered by setting
\begin{equation}
\mu_q = \mu_{\mathbb{S}^{k-1}_q} \times \delta_{\mathbf{v}_q}, \qquad \forall q \in M,
\end{equation}
where $\delta_{\mathbf{v}_q}$ is a Dirac mass centered at $0 \in \mathbf{v}$. This is certainly an $\alpha$-decreasing measure $\forall\alpha\geq 0$, and satisfies the regularity assumptions of definition~\ref{d:decreasing}. With this choice, at each step of the associated random walk one moves only on a $k$-dimensional submanifold.
\end{rmk}
\begin{theorem}\label{t:microformula}
The microscopic sub-Laplacian $L^\V$ depends only on the choice of the complement $\V$, and the convergence of~\ref{Def:MainOperator} is uniform on compact sets. Moreover, in terms of a local orthonormal frame $X_1,\ldots,X_k$ for $\distr$ and a local frame $X_{k+1},\ldots,X_n$ for $\V$, we have:
\begin{equation}\label{eq:microformula}
L^\V = \sum_{i=1}^k X_i^2 + \sum_{i,j=1}^k c_{ji}^j X_i,
\end{equation}
where $c_{ij}^k \in C^\infty(M)$ are the structural functions associated with the given frame.
\end{theorem}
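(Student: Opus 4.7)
The plan is to substitute the Taylor expansion from Lemma~\ref{l:taylor} into the integral defining $L^\V$ and then evaluate each term using the product structure $\mu_q = \mu_{\mathbb{S}^{k-1}_q} \times \mu_{\mathbf{v}_q}$ of an adapted measure. The crucial inputs are the moments of the uniform probability measure on $\mathbb{S}^{k-1}_q \subset \mathbf{h}_q$: by rotational symmetry and the defining relation $h_1^2 + \cdots + h_k^2 = 1$, we have $\int h_i\,d\mu_{\mathbb{S}^{k-1}_q} = 0$ and $\int h_i h_j\,d\mu_{\mathbb{S}^{k-1}_q} = \tfrac{1}{k}\delta_{ij}$ for $i,j \in \{1,\ldots,k\}$. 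Consequently any integrand that factorises as (odd sphere function)$\times$(vertical function) vanishes, which is precisely why $\mu_{\mathbf{v}_q}$ will drop out of the limit. This single observation simultaneously delivers the independence from the choice of adapted measure and the closed-form expression.

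Concretely, the first-order contribution $\tfrac{2k}{t^2}\cdot t\int h_i X_i(\phi)(q)\,d\mu_q$ vanishes since $i \in \{1,\ldots,k\}$ is horizontal. For the quadratic term, I would split the Greek sum in $h_j c_{ji}^\alpha h_\alpha X_i(\phi)$ according to whether $\alpha$ is horizontal or vertical: the vertical piece carries a factor $\int h_j\,d\mu_{\mathbb{S}^{k-1}_q} = 0$ and vanishes, while the horizontal piece contributes $\tfrac{1}{k}c_{ji}^j X_i(\phi)$. Together with the $\tfrac{1}{k}\delta_{ij}$ arising from $h_i h_j X_j X_i(\phi)$ and the global prefactor $\tfrac{2k}{t^2}\cdot\tfrac{t^2}{2} = k$, this reproduces exactly $\sum_{i=1}^k X_i^2(\phi) + \sum_{i,j=1}^k c_{ji}^j X_i(\phi)$.

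The main technical obstacle is controlling the cubic remainder $t^3 r_\lambda(t)$, since the vertical coordinates $h_{\bar\imath}$ of $\lambda \in \cyl_q$ are unbounded. Here I would invoke the pointwise bound $|r_\lambda(t)| \leq A + B_{\bar\ell}|h_{\bar\ell}| + C_{\bar\imath\bar\jmath}|h_{\bar\imath}||h_{\bar\jmath}|$ from Lemma~\ref{l:taylor}, valid for $\lambda \in \cyl$ with basepoint in a small ball $B(q,\eps_0)$. Integrability against $\mu_{\mathbf{v}_q}$ is exactly what the $2$-decreasing hypothesis of Definition~\ref{d:decreasing} provides (linear terms via $L^2 \subset L^1$, quadratic terms via Cauchy--Schwarz in $L^2(\mathbf{v}_q,\mu_{\mathbf{v}_q})$), so $\int |r_\lambda(t)|\,d\mu_q < \infty$ and the remainder contributes $\tfrac{2k}{t^2}\cdot t^3 \cdot O(1) = O(t)$, which vanishes as $t \to 0^+$. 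Uniform convergence on compact subsets of $M$ then follows from the continuity-in-$q$ assumption on $q \mapsto \int f^2\,d\mu_q$ (applied to the coordinate functions $h_{\bar\imath}$) combined with the fact that the constants $A, B_{\bar\ell}, C_{\bar\imath\bar\jmath}$, which come from suprema of structural functions and their first derivatives over small closed balls, are locally uniform by a standard compactness argument.
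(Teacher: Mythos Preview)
Your proposal is correct and follows essentially the same approach as the paper: both substitute the Taylor expansion of Lemma~\ref{l:taylor}, exploit the product structure $\mu_q = \mu_{\mathbb{S}^{k-1}_q}\times\mu_{\mathbf{v}_q}$ and the sphere moments $\int h_i = 0$, $\int h_i h_j = \tfrac{1}{k}\delta_{ij}$ to kill the first-order term and the vertical part of the quadratic term, and then control the cubic remainder via the $2$-decreasing hypothesis together with the continuity assumption on $q\mapsto\int h_{\bar\ell}^2\,d\mu_q$. The only cosmetic difference is that the paper phrases the surviving quadratic contribution as the trace of a quadratic form rather than invoking $\int h_i h_j = \tfrac{1}{k}\delta_{ij}$ directly.
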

\begin{proof}
Let $\nu_1,\ldots,\nu_n$ be the co-frame dual to $X_1,\ldots,X_n$:
\begin{align}
\distr = & \spn\{X_1,\ldots,X_k\}, & \mathbf{h} = & \spn\{\nu_1,\ldots,\nu_k\},\\ 
\V = & \spn\{X_{k+1},\ldots,X_n\}, & \mathbf{v} = & \spn\{\nu_{k+1},\ldots,\nu_{n}\}.
\end{align}
Fix $q \in M$. In coordinates $(h_1,\ldots,h_n):T_q^*M \to \R^n$ induced by the choice of the frame, we have:
\begin{equation}
\cyl_{q} := \cyl \cap T_{q}^*M = \{(h_1,\ldots,h_k,h_{k+1},\ldots,h_n) \mid h_1^2+\ldots+ h_k^2 = 1\}  \simeq \mathbb{S}^{k-1} \times \R^{n-k}.
\end{equation}
Since $q$ is fixed, we dub $\mathbf{v}_q = \R^{n-k}$ and $\mu_{\mathbf{v}_q} = \mu_{n-k}$. By plugging in the Taylor expansion of Lemma~\ref{l:taylor} in the definition of $L^\V$ we get:
\begin{equation}
(L^\V \phi)(q) := \lim_{t \to 0^+} \frac{2k}{t^2} \int_{\cyl_q}\left\lbrace t h_i X_i(\phi) + \frac{1}{2}t^2\left[h_j c_{ji}^\alpha h_\alpha X_i(\phi) + h_i h_j X_j(X_i(\phi))\right] + t^3 r_\lambda(t) \right\rbrace \mu(\lambda),
\end{equation}
where repeated indices are summed (according to the generalized Einsten's convention) and we suppressed some of the explicit evaluations at $q$. We compute the three terms of the integrand.

1. The linear in $t$ term vanishes. In fact, the integrand depends only on the variables $h_1,\ldots,h_k$, and
\begin{equation}
\int_{\cyl_q}  h_i X_i(\phi) \mu(\lambda) =  X_i(\phi) \int_{\R^{n-k}}  \mu_{n-k} \int_{\mathbb{S}^{n-k}} h_i \mu_{\mathbb{S}^{k-1}} = 0,
\end{equation}
since the integral of any linear function on the sphere vanishes.

2. For the quadratic in $t$ term, we distinguish two contributions:
\begin{equation}
h_j c_{ji}^\alpha h_\alpha X_i(\phi) + h_i h_j X_j(X_i(\phi)) =  \underbrace{ h_j c_{ji}^{\bar\ell} h_{\bar\ell} X_i(\phi)}_{1} + \underbrace{ h_j c_{ji}^\ell h_\ell X_i(\phi) + h_i h_j X_j(X_i(\phi))}_{2},
\end{equation}
where, we recall that the range of summation of barred latin indices is from $k+1$ to $n$. The first term, not present in the Riemannian case, is the product of a linear function on $\mathbb{S}^{k-1}$ and a linear function on $\mathbb{R}^{n-k}$. Since the measure $\mu$ is a product, by Fubini, we can first perform the integral on $\mathbb{S}^{k-1}$:
\begin{equation}
\int_{\cyl_q}  h_j c_{ji}^{\bar\ell} h_{\bar\ell} X_i(\phi) = c_{ji}^{\bar\ell}X_i(\phi)\int_{\mathbb{S}^{k-1}} h_j \mu_{\mathbb{S}^{k-1}} \int_{\R^{n-k}}\mu_{n-k} = 0.
\end{equation}
On the other hand, the second term does not contain any $h_{\bar\ell}$, for $\bar\ell=k+1,\ldots,n$ and is the restriction on $\mathbb{S}^{k-1}$ of a quadratic form $Q = Q_{ij}h_i h_j$, with:
\begin{equation}
Q_{ij} = X_i(X_j(\phi)) + c_{i\ell}^j X_\ell(\phi), \qquad i,j=1,\ldots,k.
\end{equation}
Recall that for any quadratic form $Q : \R^k \to \R$ we have
\begin{equation}
\int_{\mathbb{S}^{k-1}} Q(v) \mu_{\mathbb{S}^{k-1}}(v) =  \frac{1}{k} \tr(Q).
\end{equation}
Thus we get, for the quadratic in $t$ term, the following expression:
\begin{equation}
k\int_{\cyl_q}  Q_{ij}h_i h_j \mu =  k \int_{\R^{n-k}} \mu_{n-k} \int_{\mathbb{S}^{k-1}} Q_{ij}h_i h_j \mu_{\mathbb{S}^{k-1}}  =  \tr(Q) = \sum_{i=1}^k X_i^2(\phi) + \sum_{i,\ell =1}^k c_{i\ell}^i X_\ell(\phi).
\end{equation}
where we restored the explicit summations in the last term.

3. The final term to compute is the remainder. By Lemma~\ref{l:taylor}, if $t$ is sufficiently small, the remainder $r_\lambda(t)$ of the Taylor's expansion is uniformly bounded by a quadratic polynomial in the unbounded variables $h_{\bar\imath}$. 
\begin{equation}
\left\lvert\frac{k}{t}^2\int_{\cyl_q} r_\lambda(t) t^3\right\rvert \leq k t \int_{\R^{n-k}} \left(A +B_{\bar\ell}|h_{\bar\ell}| + C_{\bar\imath\bar\jmath}|h_{\bar\imath}||h_{\bar\jmath}|\right)\mu_{n-k}\int_{\mathbb{S}^{k-1}} \mu_{\mathbb{S}^{k-1}}, \qquad \forall t \leq \eps_0.
\end{equation}
By Lemma~\ref{l:taylor} this estimate holds uniformly on an open ball $B(q,\eps_0)$. Then, for $q$ in a compact set $K$:
\begin{equation}
\left\lvert\frac{k}{t}^2\int_{\cyl_q} r_\lambda(t) t^3\right\rvert\leq t D \sum_{\bar\ell = k+1}^n \int_{\cyl_q}  (h_{\bar\ell})^2 \mu_q = t g(q).
\end{equation}
where $D$ is a constant and, by our assumption on the family of measures, $g(q)$ is continuous (recall that the $h_{\alpha}$ are precisely the linear, smooth functions $\lambda \mapsto \langle \lambda, X_\alpha\rangle$ on $T^*M$). Then the remainder goes to zero uniformly on $K$. The fact that $L^\V$ depends only on the choice of the complement (and not on the adapted measure) is a consequence of Eq.~\eqref{eq:microformula}.
\end{proof}

\begin{rmk}
We stress that equiregularity is not assumed in Theorem~\ref{t:microformula}, for Lemma~\ref{l:taylor} is completely general.
\end{rmk}

\subsection{Horizontal divergence}\label{horizontal-div}

We can write the operator $L^\V$ in a more classical fashion, introducing the so-called \emph{horizontal divergence}. As the classical divergence depends on the choice of a volume form (or a density), the horizontal one depends on the choice of the complement $\V$.

\begin{definition}
Let $m \in \mathbb{N}$. The bundle of \emph{horizontal $m$-alternating tensors} on $M$ is the disjoint union
\begin{equation}
\Lambda_\distr^m M := \bigsqcup_{q\in M} (\wedge^m \distr_q)^*.
\end{equation}
where $(\wedge^m \distr_q)^*$ is the space of $m$-alternating functionals on $\distr_q$. Sections of $\Lambda_\distr^m M$ are called \emph{horizontal $m$-forms}.
\end{definition}
Any $m$-form induces an horizontal $m$-form by restriction. Given the complement $\V$, the fiber-wise linear projection $\pi_\distr : TM \to \distr$ is well defined, and for any horizontal $m$-form $\eta$, we can define a $m$-form $\tilde{\eta}:= \eta \circ \pi_\distr$.
\begin{definition}
Given an horizontal $m$-form $\eta$, and a vector field $X \in \Gamma(TM)$, we define the \emph{horizontal Lie derivative} $\mathcal{L}_X^\V\eta$ as the horizontal $m$-form such that its value at $q \in M$ is
\begin{equation}
(\mathcal{L}_X^\V \eta)_q = \left.\frac{d}{dt}\right\rvert_{t=0}(P_t^* \eta \circ \pi_\distr)_q,
\end{equation}
where $P_t$ is the flow of $X$.
\end{definition}
This is the same definition of the Lie derivative of $m$-forms, with the addition of $\pi_\distr$, needed here since $\eta$ acts only on $m$-tuples of horizontal vectors. One obtains readily:
\begin{equation}\label{eq:hordivformula}
(\mathcal{L}^\V_X \eta) (Y_1,\ldots,Y_m):= X(\eta(Y_1,\ldots,Y_m)) - \sum_{i=1}^m \eta(Y_m,\ldots,\pi_\distr [X,Y_i],\ldots,Y_m),
\end{equation}
where $Y_1,\ldots,Y_m \in \Gamma(\distr) $. If $\distr$ is oriented (as a vector bundle), with $\rank \distr = k$, we define a canonical horizontal $k$-form $\eta$ such that
\begin{equation}
\eta(X_1,\ldots,X_k) = 1,
\end{equation}
for any oriented orthonormal frame $X_1,\ldots,X_k  \in \Gamma(\distr) $.
\begin{definition}
Let $X \in \Gamma(TM)$, and let $\eta$ be a non-zero $k$ form. The \emph{horizontal divergence} $\div^\V(X)$ is the function defined by
\begin{equation}
\mathcal{L}_X^\V \eta = \div^\V(X) \eta.
\end{equation}
\end{definition}
\begin{rmk}
The definition does not depend on the choice of orientation. A general definition for non-orientable distributions can be given in a standard way with horizontal densities (projectivized non-vanishing $k$ forms).
\end{rmk}
Indeed $\div^\V(X)$ computes the infinitesimal change of volume, under the flow of $X$, of the projection on $\distr$ of the standard parallelotope of $\distr$. As a direct consequence of Eq.~\eqref{eq:microformula} and Eq.~\eqref{eq:hordivformula}, we obtain the following.
\begin{prop}
Let $TM = \distr \oplus \V$. The microscopic Laplacian is
\begin{equation}
L^\V = \div^\V \circ \grad = \sum_{i=1}^k X_i^2 + \div^\V(X_i) X_i,
\end{equation}
where $X_1,\ldots,X_k$ is an orthonormal frame, $\grad$ is the horizontal gradient and $\div^\V$ is the horizontal divergence.
\end{prop}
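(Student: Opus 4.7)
The plan is straightforward: by Theorem~\ref{t:microformula} we already know
\[
L^\V = \sum_{i=1}^k X_i^2 + \sum_{i,j=1}^k c_{ji}^j X_i,
\]
so it suffices to identify the structural coefficient $\sum_{j=1}^k c_{ji}^j$ with the horizontal divergence $\div^\V(X_i)$, and then verify that $\div^\V\circ\grad$ expands to the claimed expression via Leibniz.

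First, I would compute $\div^\V(X_i)$ explicitly by evaluating the defining relation $\mathcal{L}^\V_{X_i}\eta = \div^\V(X_i)\eta$ on the oriented orthonormal frame $X_1,\ldots,X_k$. Using formula~\eqref{eq:hordivformula} with $Y_l = X_l$, the first term vanishes since $\eta(X_1,\ldots,X_k)=1$ is constant, so
\[
\div^\V(X_i) = -\sum_{l=1}^k \eta\bigl(X_1,\ldots,\pi_\distr[X_i,X_l],\ldots,X_k\bigr).
\]
Next, expand $\pi_\distr[X_i,X_l] = \sum_{j=1}^k c_{il}^j X_j$ (only the horizontal part of the bracket survives under $\pi_\distr$). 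By alternating the $l$-th slot of $\eta$, only the $j=l$ contribution survives, giving $c_{il}^l$. Hence
\[
\div^\V(X_i) = -\sum_{l=1}^k c_{il}^l = \sum_{l=1}^k c_{li}^l,
\]
the last equality by antisymmetry $c_{il}^l = -c_{li}^l$ of the structural functions. This already matches the coefficient in Theorem~\ref{t:microformula}.

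Finally, I would write $\grad\phi = \sum_{i=1}^k X_i(\phi)X_i$ and apply the horizontal divergence. A short check from~\eqref{eq:hordivformula}, using the same frame evaluation, gives the Leibniz identity $\div^\V(fX_i) = f\,\div^\V(X_i) + X_i(f)$ for $f \in C^\infty(M)$; summing over $i$ then yields
\[
\div^\V\circ\grad\,(\phi) = \sum_{i=1}^k X_i^2(\phi) + \sum_{i=1}^k \div^\V(X_i)\,X_i(\phi),
\]
which, combined with the identification above, matches the formula for $L^\V$.

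There is no real obstacle; every step is a direct unwinding of definitions. The only subtlety worth stressing is that horizontal divergence, defined through the projection $\pi_\distr$ onto $\distr$, picks up exactly the ``contracted'' horizontal structural functions $\sum_j c_{ji}^j$, and this is precisely the combination that emerged intrinsically from the random-walk computation in Theorem~\ref{t:microformula}. This coincidence is what makes the elegant divergence-form expression $L^\V = \div^\V\circ\grad$ possible.
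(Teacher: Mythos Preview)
Your proposal is correct and follows exactly the approach the paper indicates: the paper states this proposition as ``a direct consequence of Eq.~\eqref{eq:microformula} and Eq.~\eqref{eq:hordivformula}'' without further detail, and you have simply spelled out that direct consequence---computing $\div^\V(X_i)=\sum_{l} c_{li}^l$ from \eqref{eq:hordivformula} and matching it to the first-order coefficient in Theorem~\ref{t:microformula}.
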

This formula must be  compared directly with Eq~\eqref{eq:macroformula} for the macroscopic operator, where the horizontal divergence $\div^\V$ is replaced by the divergence $\div_\omega$ w.r.t. some volume form $\omega$ on $M$.

\section{The equivalence problem}\label{s:equiv}

Fix a volume form $\omega$ and a complement $\V$. We consider both $L^\V$ and $\Delta_\omega$ as second order differential operators on the space of compactly smooth supported functions $C^\infty_0(M)$, with the $L^2$ product:
\begin{equation}
(f_1,f_2)_\omega = \int_M f_1 f_2  \omega, \qquad f_1,f_2 \in C^\infty_0(M).
\end{equation}
The operator $\Delta_\omega$ is symmetric by construction. What about $L^\V$? The principal symbol of both operators, as a function on the cotangent bundle, is twice the Hamiltonian $2H:T^*M \to \R$.

\begin{theorem}\label{t:compatibility}
Let $\V$ be a complement and $\omega$ a volume form. Then $L^\V$ is symmetric w.r.t.\ the $L^2$ product induced by $\omega$ if and only if $L^\V = \Delta_\omega$ or, equivalently, if and only if
\begin{equation}
\chi^{(\V,\omega)} := \Delta_\omega -L^\V= \sum_{i=1}^k \sum_{j=k+1}^n c_{ji}^j X_i +\grad( \theta)= 0,
\end{equation}
where $\theta = \log |\omega(X_1,\ldots,X_n)|$ and $c_{ij}^\ell \in C^\infty(M)$ are the structural functions associated with an orthonormal frame $X_1,\ldots,X_k$ for $\distr$ and a frame $X_{k+1},\ldots,X_n$ for $\V$.
\end{theorem}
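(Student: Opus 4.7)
The plan is to split the biconditional into three pieces: (1) establish the explicit formula for $\chi^{(\V,\omega)}$ by a direct computation, (2) observe that $L^\V=\Delta_\omega \iff \chi^{(\V,\omega)}=0$ is a tautology, and (3) prove the nontrivial equivalence ``$L^\V$ symmetric w.r.t.\ $\omega$'' $\iff L^\V=\Delta_\omega$.

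For step (1), I subtract formula~\eqref{eq:microformula} of Theorem~\ref{t-lmu} from formula~\eqref{eq:macroformula} for $\Delta_\omega$. The second-order parts $\sum_{i=1}^k X_i^2$ cancel, and splitting $\sum_{\alpha=1}^n c_{\alpha i}^\alpha = \sum_{j=1}^k c_{ji}^j + \sum_{j=k+1}^n c_{ji}^j$ in $\Delta_\omega$ lets the first piece cancel the corresponding term in $L^\V$, yielding
\[
\chi^{(\V,\omega)} = \Delta_\omega - L^\V = \sum_{i=1}^k\sum_{j=k+1}^n c_{ji}^j\, X_i + \grad(\theta).
\]
In particular $\chi^{(\V,\omega)}$ is a pure vector field (a first-order derivation with no zero-order term), which is the key structural fact to be used later.

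For the easy direction of (3), if $L^\V=\Delta_\omega=\div_\omega\circ\grad$ then symmetry on $C_0^\infty(M)$ is immediate: applying the integration-by-parts lemma of Section~\ref{s-intrinsic} with $X=\grad\psi$,
\[
\int_M \phi\,\Delta_\omega\psi\,\omega = -\int_M \metr(\grad\phi,\grad\psi)\,\omega,
\]
which is visibly symmetric under $\phi\leftrightarrow\psi$.

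The nontrivial direction requires showing that if $L^\V$ is symmetric, then $Y:=\chi^{(\V,\omega)}=\Delta_\omega-L^\V$ is the zero vector field. Symmetry of $Y$ combined with the integration-by-parts identity $\int_M (Y\phi)\psi\,\omega + \int_M \phi(Y\psi)\,\omega = -\int_M \phi\psi\,\div_\omega(Y)\,\omega$ (applied to $\phi,\psi\in C_0^\infty(M)$) forces
\[
2\int_M \phi\,(Y\psi)\,\omega + \int_M \phi\psi\,\div_\omega(Y)\,\omega = 0 \quad\text{for all } \phi,\psi\in C_0^\infty(M).
\]
Varying $\phi$ against a continuous integrand yields the pointwise identity $2\,Y\psi + \psi\,\div_\omega(Y)\equiv 0$ for each $\psi\in C_0^\infty(M)$. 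Taking $\psi$ equal to $1$ on a neighborhood of an arbitrary $q$ (so that $Y\psi$ vanishes there) gives $\div_\omega(Y)(q)=0$, hence $\div_\omega(Y)\equiv 0$; then $Y\psi\equiv 0$ for every $\psi\in C_0^\infty(M)$, which forces $Y_q=0$ at every $q$ by choosing $\psi$ with arbitrary $d\psi_q$. The point to watch is precisely this separation of principal and sub-principal parts: the argument works \emph{only} because step (1) established that $\chi^{(\V,\omega)}$ has no zero-order term, so symmetry, which in general only yields a relation between the two orders, descends to the vanishing of each. This gives $\chi^{(\V,\omega)}=0$, i.e.\ $L^\V=\Delta_\omega$, closing the biconditional.
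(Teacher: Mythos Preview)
Your proof is correct and follows essentially the same route as the paper: compute $\chi^{(\V,\omega)}$ by subtracting \eqref{eq:microformula} from \eqref{eq:macroformula}, observe that it is a pure vector field, and then use symmetry of $\chi$ (as the difference of two symmetric operators) together with the derivation property to force $\chi=0$. The paper's version of the symmetry step is more compressed---it tests against $f_2\equiv 1$ near $\mathrm{supp}(f_1)$ to get $\int_M \chi(f_1)\,\omega=0$, then combines this with the derivation identity $\chi(f_1 f_2)=\chi(f_1)f_2+f_1\chi(f_2)$ and symmetry to conclude $\int \chi(f_1) f_2\,\omega=0$ for all $f_1,f_2$---whereas you compute the formal adjoint explicitly and separate out $\div_\omega(Y)$ first; these are minor variants of the same argument.
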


\begin{proof}
To get the explicit formula for $\chi^{(\V,\omega)}$ compare Eq.~\eqref{eq:macroformula} with Eq.~\eqref{eq:microformula}. For what concerns the symmetry, suppose that $L^\V$ is symmetric w.r.t. some choice of $\omega$. Since $\Delta_\omega$ is symmetric, $\chi^{(\V,\omega)}$ is also symmetric, that is (suppressing some notation):
\begin{equation}
(\chi(f_1),f_2)_\omega = (f_1,\chi(f_2))_\omega, \qquad \forall f_1,f_2 \in C^\infty_0(M).
\end{equation}
If we choose $f_2 = 1$ on some domain larger than the support of $f_1$ this is equivalent to $\int_M \chi(f_1) \omega = 0$ for any $f_1 \in C^\infty_0$. In particular $\chi =0$. The converse is clear.
\end{proof}
The condition $\chi^{(\V,\omega)} = 0$ is equivalent to the following system of PDEs:
\begin{equation}
X_i(\theta) + \sum_{\bar\jmath = k+1}^n c_{\bar\jmath i}^{\bar\jmath} = 0, \qquad i=1,\ldots,k.
\end{equation}
In particular $L^\V$ is symmetric w.r.t. some volume $\omega$ if and only if, for fixed $\V$, the above system admits a global solution $\theta$. If this is the case, the associated volume is given by $\omega(X_1,\ldots,X_n) = e^\theta$.
\begin{rmk}
The \emph{compatibility condition} $\chi^{(\V,\omega)}= 0$ is the same appearing in \cite[Theorem 5.13]{GordLae}, written in a different form. We call \emph{compatible} the pairs $(\V,\omega)$ solving the compatibility condition.
\end{rmk}

Finally, if $L^\V$ is symmetric w.r.t. some choice of volume $\omega$, the latter is unique (up to constant rescaling).
\begin{lemma}\label{l:uniqueness}
If $L^\V = \Delta_\omega$, then $L^\V = \Delta_{\omega'}$ if and only if $\omega = c \omega'$, where $c$ is a non-zero constant.
\end{lemma}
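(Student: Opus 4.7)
The plan is to reduce the lemma to the change-of-volume formula of Lemma~\ref{l:changeofvolume} combined with Chow--Rashevskii's theorem. The converse direction is essentially free: if $\omega = c\omega'$ with $c$ a non-zero constant, then writing $\omega = e^g \omega'$ with $g = \log|c|$ constant, Lemma~\ref{l:changeofvolume} gives $\Delta_\omega = \Delta_{\omega'} + \grad(g) = \Delta_{\omega'}$, so $\Delta_{\omega'} = L^\V$ as well.

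For the forward direction, suppose $L^\V = \Delta_\omega = \Delta_{\omega'}$. Since $\omega$ and $\omega'$ are both volume forms on the same oriented manifold, there exists a smooth function $g \in C^\infty(M)$ such that $\omega' = e^g \omega$. By Lemma~\ref{l:changeofvolume},
\begin{equation}
0 = \Delta_{\omega'} - \Delta_\omega = \grad(g),
\end{equation}
where the right-hand side is understood as a first order derivation acting on $C^\infty(M)$. Since $\grad(g) = \sum_{i=1}^k X_i(g) X_i$ for any local orthonormal frame $X_1,\ldots,X_k$ of $\distr$, the vanishing of this derivation is equivalent to $X_i(g) = 0$ for $i=1,\ldots,k$, i.e.\ $g$ is constant along every horizontal curve.

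Now I invoke connectedness of $M$ together with H\"ormander's condition: by the Chow--Rashevskii theorem recalled in Section~\ref{s:prel}, any two points of $M$ are joined by a horizontal curve, along which $g$ is constant. Hence $g$ is globally constant, and $\omega' = e^g \omega$ means $\omega = c\, \omega'$ with $c = e^{-g}$ a non-zero constant, as required.

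I do not foresee a serious obstacle. The only subtlety worth double-checking is that the identity $\grad(g) = 0$ as a derivation really forces $X_i(g) = 0$ for \emph{each} $i$ (not merely $\sum_i X_i(g)^2 = \metr(\grad g, \grad g) = 0$ at each point, which would also do the job). Either reading suffices: in the first, linear independence of $X_1,\ldots,X_k$ yields the conclusion pointwise; in the second, one uses positive definiteness of $\metr$ on $\distr$ to deduce $\grad(g) \equiv 0$ and then $df|_\distr \equiv 0$, which is the same horizontal-annihilation condition.
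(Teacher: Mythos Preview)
Your proof is correct and follows essentially the same approach as the paper: write $\omega' = e^g \omega$, apply the change-of-volume formula (Lemma~\ref{l:changeofvolume}) to obtain $\grad(g) = 0$, and conclude via the bracket-generating condition that $g$ is constant. The paper phrases the computation in terms of $\chi^{(\V,\omega')} = \chi^{(\V,\omega)} + \grad(g)$, but this is the same argument.
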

\begin{proof}
Let $\omega' = e^g \omega$ with $g \in C^\infty(M)$. Then, using the change of volume formula (see Lemma~\ref{l:changeofvolume}):
\begin{equation}
\chi^{(\V,\omega')} = \Delta_{\omega'} - L^\V = \Delta_{\omega} +\grad(g) - L^\V = \chi^{(\V,\omega)} + \grad(g).
\end{equation}
Thus both pairs are compatible iff $\grad (g) = 0$, that is (by the bracket generating condition) iff $g$ is constant.
\end{proof}


\section{Carnot groups}\label{s:carnot}

A Carnot group $G$ of step $m$ is a simply connected Lie group whose Lie algebra of left-invariant vector fields $\mathfrak{g}$ admits a nilpotent stratification of step $m$, namely
\begin{equation}
\mathfrak{g} = \mathfrak{g}_1 \oplus \dots\oplus \mathfrak{g}_m, \qquad \mathfrak{g}_i \neq \{0\}, \qquad \forall i=1,\ldots,m,
\end{equation}
with
\begin{equation}	
[\mathfrak{g}_1,\mathfrak{g}_j] = \mathfrak{g}_{1+j}, \qquad \forall 1\leq j\leq m-1, \qquad \text{and} \qquad \mathfrak{g}_{m+1} = \{0\}.
\end{equation}
A left-invariant sub-Riemannian structure on $G$ is obtained by defining a scalar product on $\mathfrak{g}_1$ or, equivalently, by declaring a set $X_1,\ldots,X_{k} \in \mathfrak{g}_1$ a global orthonormal frame. In particular, $\distr|_q = \mathfrak{g}_1|_q$, for all $q \in G$. The group exponential map,
\begin{equation}
\mathrm{exp}_{G} : \mathfrak{g} \to G,
\end{equation}
associates with $v \in \mathfrak{g}$ the element $\gamma(1)$, where $\gamma: [0,1] \to G$ is the unique integral line of the vector field $v$ such that $\gamma(0) = 0$. Since $G$ is simply connected and $\mathfrak{g}$ is nilpotent, $\mathrm{exp}_G$ is a smooth diffeomorphism. Thus we identify $G \simeq \R^m$, endowed with a polynomial product law.
The adjoint endomorphism $\mathrm{ad}_{X}:\mathfrak{g} \to \mathfrak{g}$ is:
\begin{equation}
\mathrm{ad}_X (Y) := [X,Y], \qquad \forall X,Y \in \mathfrak{g}.
\end{equation}
Notice that, if $X \in \mathfrak{g}_j$, then 
\begin{equation}
\ad_X|_{\mathfrak{g}_\ell} : \mathfrak{g}_\ell \to \mathfrak{g}_{\ell+j}, \qquad \forall \ell,j=1,\ldots,m,
\end{equation}
by the graded structure of $\mathfrak{g}$. 
\begin{rmk}
In the literature, these structures are also referred to as \emph{Carnot groups} of type $(k,n)$, where $k = \dim \distr = \rank \mathfrak{g}_1$ is the \emph{rank} of the distribution and $n$ is the dimension of $G$. 
\end{rmk}

By definition, Carnot groups are left-invariant sub-Riemannian structures (see Definition~\ref{d:left-invariant}). Then it is natural to fix a left-invariant volume, that is proportional to Popp's one $\popp$ by Corollary~\ref{c:popp=haar}. Moreover, we restrict to left-invariant complements $\V$. In this setting, we rewrite the compatibility condition $\chi^{(\V,\popp)} = 0$ in a more invariant fashion. To do this, we choose left invariant orthonormal frames $X_1,\ldots,X_k$ for $\distr$ and left-invariant frames $X_{k+1},\ldots,X_n$ for $\V$. Thanks to the splitting $\distr \oplus \V$ we have the projections $\pi_\V$ and $\pi_\distr$ on $TM$.

\begin{lemma}
For Carnot groups, the compatibility condition for left-invariant volumes and complements is
\begin{equation}
\tr(\pi_\V \circ \ad_{X_i}) = 0, \qquad \forall i =1,\ldots,k.
\end{equation}
\end{lemma}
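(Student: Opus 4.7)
The plan is very short: the lemma is essentially unpacking the compatibility condition of Theorem~\ref{t:compatibility} in the left-invariant setting and recognizing the resulting sum as a trace.

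First, I would observe that since $\omega$ is a left-invariant volume (proportional to Popp's by Corollary~\ref{c:popp=haar}) and $X_1,\ldots,X_n$ is a left-invariant frame, the function $\theta = \log|\omega(X_1,\ldots,X_n)|$ is constant on $G$. Hence $\grad(\theta) = 0$, and the compatibility condition from Theorem~\ref{t:compatibility} reduces to
\begin{equation}
\chi^{(\V,\omega)} = \sum_{i=1}^k \Bigl(\sum_{j=k+1}^n c_{ji}^j\Bigr) X_i = 0,
\end{equation}
which, since $X_1,\ldots,X_k$ are linearly independent, is equivalent to $\sum_{j=k+1}^n c_{ji}^j = 0$ for every $i=1,\ldots,k$.

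The second step is to identify this sum with $\tr(\pi_\V \circ \ad_{X_i})$. Computing $\ad_{X_i}(X_a) = \sum_{c=1}^n c_{ia}^c X_c$ and then applying the projection gives $(\pi_\V\circ\ad_{X_i})(X_a) = \sum_{c=k+1}^n c_{ia}^c X_c$. The diagonal entries in the basis $X_1,\ldots,X_n$ therefore vanish for $a\leq k$ (the $X_a$-component lies in $\V$, while $a\le k$) and equal $c_{ia}^a$ for $a \geq k+1$. Thus
\begin{equation}
\tr(\pi_\V \circ \ad_{X_i}) = \sum_{j=k+1}^n c_{ij}^j = -\sum_{j=k+1}^n c_{ji}^j,
\end{equation}
using antisymmetry $c_{ij}^\ell = -c_{ji}^\ell$ of the structure constants in the lower indices. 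The sign is immaterial since we equate to zero, so the compatibility condition is exactly $\tr(\pi_\V\circ\ad_{X_i})=0$ for all $i=1,\ldots,k$.

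There is no real obstacle here; the statement is essentially a coordinate-free rewriting. The only small subtlety is keeping track of the index convention and the antisymmetry sign, and noting that the diagonal entries of $\pi_\V\circ\ad_{X_i}$ indexed by horizontal basis vectors vanish automatically, so the trace picks up only the vertical part where the projection $\pi_\V$ acts as the identity on $X_j$, $j\geq k+1$.
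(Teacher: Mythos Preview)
Your proof is correct and follows essentially the same approach as the paper: observe that $\theta$ is constant by left-invariance so the $\grad(\theta)$ term drops, then identify $\sum_{j=k+1}^n c_{ji}^j$ with $-\tr(\pi_\V\circ\ad_{X_i})$. The only cosmetic difference is that the paper obtains this identity by writing the sum as $\sum_{j=k+1}^n \nu^j([X_j,X_i]) = -\tr(\pi_\V\circ\ad_{X_i}\circ\pi_\V)$ and then invoking the cyclic property of the trace together with $\pi_\V^2=\pi_\V$, whereas you compute the diagonal entries of $\pi_\V\circ\ad_{X_i}$ directly; both routes are equivalent and equally short.
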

\begin{proof}
We rewrite the compatibility condition as
\begin{equation}
\sum_{j=k+1}^{n} c_{ji}^j X_i +\cancel{X_i(\theta)} = \sum_{j=k+1}^n \nu^j([X_j,X_i]) = - \tr(\pi_\V\circ \mathrm{ad}_{X_i}\circ \pi_\V ) = -\tr(\pi_\V \circ \ad_{X_i}) = 0,
\end{equation}
where we used the cyclic property of the trace and the fact that projections are idempotent. The function $\theta$ appearing in the compatibility condition is constant, since both $\popp$ and $X_1,\ldots,X_n$ are left-invariant.
\end{proof}

On Carnot groups, thanks to the canonical identification $\mathfrak{g}_j = \distr^j/\distr^{j-1}$, there is a natural left-invariant Riemannian extension. In fact, it is sufficient to consider the orthogonal direct sum
\begin{equation}
\mathfrak{g} = \distr \oplus \mathfrak{g}_2 \oplus \ldots\oplus \mathfrak{g}_m,
\end{equation}
where on each factor $\mathfrak{g}_j$, with $g\geq 2$, we have a well defined scalar product induced by the maps $\pi_j$ of Eq.~\eqref{eq:pimap}. This gives a smooth left-invariant scalar product on $TM$. Popp's volume $\popp$ is the Riemannian volume of such natural Riemannian extension. We can address questions \textbf{Q2} and \textbf{Q3} of Section~\ref{s:intro}, restricted to the class of left-invariant volumes and complements.

The most natural complement comes from the very definition of the Carnot structure:	
\begin{equation}
\V_0 := \mathfrak{g}_2 \oplus \dots\oplus \mathfrak{g}_m,
\end{equation}
\begin{prop}
For any Carnot group $G$, we have $\Delta_{\popp} = L^{\V_0}$.
\end{prop}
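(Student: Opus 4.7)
The plan is to invoke the compatibility criterion developed immediately before the statement. By Theorem~\ref{t:compatibility} specialized to left-invariant data on a Carnot group, the equality $\Delta_{\popp} = L^{\V_0}$ is equivalent to the condition
\begin{equation}
\tr(\pi_{\V_0}\circ \ad_{X_i}) = 0, \qquad \forall\, i=1,\ldots,k,
\end{equation}
where $X_1,\ldots,X_k$ is the left-invariant orthonormal frame of $\distr = \mathfrak{g}_1$. So the entire proof reduces to checking this tracial identity.

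The key observation is that, because $X_i\in\mathfrak{g}_1$, the graded structure of $\mathfrak{g}$ gives $\ad_{X_i}(\mathfrak{g}_\ell)\subseteq \mathfrak{g}_{\ell+1}$ for every $\ell=1,\ldots,m$ (with the convention $\mathfrak{g}_{m+1}=\{0\}$). Composing with the projector $\pi_{\V_0}$, which is the identity on $\mathfrak{g}_\ell$ for $\ell\geq 2$ and kills $\mathfrak{g}_1$, I still obtain a map sending each homogeneous layer $\mathfrak{g}_\ell$ strictly into another layer $\mathfrak{g}_{\ell+1}$ (or to zero). In particular $(\pi_{\V_0}\circ \ad_{X_i})(\mathfrak{g}_\ell)\cap \mathfrak{g}_\ell = \{0\}$ for every $\ell$.

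To conclude, I would pick a basis of $\mathfrak{g}$ adapted to the stratification, i.e.\ the concatenation of bases of the layers $\mathfrak{g}_1,\mathfrak{g}_2,\ldots,\mathfrak{g}_m$. For any such basis vector $e\in\mathfrak{g}_\ell$, the image $(\pi_{\V_0}\circ\ad_{X_i})(e)\in\mathfrak{g}_{\ell+1}$ has no component along $e$, so the corresponding diagonal entry in the matrix representation vanishes. Summing, $\tr(\pi_{\V_0}\circ \ad_{X_i})=0$, which is exactly the compatibility condition. There is no real obstacle here: once one recognizes that the lemma preceding the statement already reduces everything to a trace computation, the grading does all the work, and the result follows with no integrability or PDE considerations.
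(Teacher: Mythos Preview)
Your proposal is correct and follows essentially the same approach as the paper: both reduce to the trace condition via the preceding lemma and then exploit the grading to see that $\pi_{\V_0}\circ\ad_{X_i}$ shifts layers strictly upward, hence has vanishing trace. The paper phrases this last step as ``$\pi_{\V_0}\circ\ad_{X_i}\circ\pi_{\V_0}=\ad_{X_i}|_{\V_0}$ is nilpotent,'' while you spell out the vanishing diagonal in a graded basis, but these are the same observation.
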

\begin{proof}
By definition of $\V_0$ and the stratified structure of the Carnot algebra, we have
\begin{equation}
\pi_{\V_0}\circ \mathrm{ad}_{X_i}\circ \pi_{\V_0} = \mathrm{ad}_{X_i}|_{\V_0},
\end{equation}
that is indeed nilpotent, since $\ad_{X_i}$ is, and $\tr(\ad_{X_i}) =0$. Then the compatibility condition is satisfied.
\end{proof}

Any distinct left-invariant complement is the graph of a non-trivial linear map $\ell: \V_0 \to \distr$, that is
\begin{equation}
\V_\ell := \{X+ \ell( X)\mid X \in \V_0\}.
\end{equation}
\begin{prop}\label{p:nonuniquenessCarnot}
If $\Delta_\popp = L^\V$, then $\V = \V_\ell$, with
\begin{equation}
\tr(\ell \circ \ad_{X_i}) = 0, \qquad \forall i=1,\ldots,k.
\end{equation}
\end{prop}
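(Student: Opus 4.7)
The plan is to apply the coordinate-free compatibility condition for Carnot groups, namely $\tr(\pi_\V \circ \ad_{X_i}) = 0$ for all $i=1,\ldots,k$, to the specific case $\V = \V_\ell$, and to show that the correction term $\tr(\pi_{\V_0}\circ\ad_{X_i})$ (which already vanishes by the previous proposition) is simply replaced by the extra contribution $\tr(\ell \circ \ad_{X_i})$. First I would explain why every left-invariant complement $\V$ transverse to $\distr$ is necessarily of the form $\V_\ell$ for a unique linear $\ell:\V_0 \to \distr$: both $\V$ and $\V_0$ project isomorphically onto $\mathfrak{g}/\distr \simeq \V_0$ under the quotient, so $\V$ is the graph of $\ell := \pi_\distr \circ (\pi_{\V_0}|_\V)^{-1}$.

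Next I would compute the projection $\pi_{\V_\ell}$ onto $\V_\ell$ along $\distr$. Decomposing $Y \in \mathfrak{g}$ as $Y = Y_\distr + Y_{\V_0}$ using the canonical splitting, the unique element of $\V_\ell$ whose $\V_0$-component equals $Y_{\V_0}$ is $Y_{\V_0} + \ell(Y_{\V_0})$; hence
\begin{equation}
\pi_{\V_\ell}(Y) = Y_{\V_0} + \ell(Y_{\V_0}).
\end{equation}
The key observation is that for any $X_i \in \mathfrak{g}_1$ the image $\ad_{X_i}(\mathfrak{g})$ lies entirely in $\V_0 = \mathfrak{g}_2\oplus\cdots\oplus\mathfrak{g}_m$, because $\ad_{X_i}$ raises the grading by one, so $\ad_{X_i}(\mathfrak{g}_j) \subset \mathfrak{g}_{j+1}$ for every $j\geq 1$. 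Consequently $(\ad_{X_i}(Y))_{\V_0} = \ad_{X_i}(Y)$ for all $Y$, and the formula above gives
\begin{equation}
\pi_{\V_\ell} \circ \ad_{X_i} = \ad_{X_i} + \ell \circ \ad_{X_i},
\end{equation}
where both sides are interpreted as endomorphisms of $\mathfrak{g}$ (with $\ell$ extended by $0$ on $\distr$ if needed, which is consistent since $\ad_{X_i}$ never produces $\distr$-components).

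Taking traces on $\mathfrak{g}$ and using that $\ad_{X_i}$ is nilpotent (so $\tr(\ad_{X_i}) = 0$), the compatibility condition $\tr(\pi_{\V_\ell} \circ \ad_{X_i}) = 0$ becomes
\begin{equation}
\tr(\ell \circ \ad_{X_i}) = 0, \qquad i=1,\ldots,k,
\end{equation}
which is the claimed condition. I would close with a brief remark that $\tr(\ell \circ \ad_{X_i})$ is unambiguous: since $\ad_{X_i}(\mathfrak{g})\subset\V_0$ one can compute it either as the trace on $\distr$ of the endomorphism $\ell \circ \ad_{X_i}|_\distr : \distr \to \distr$ or as the trace on $\mathfrak{g}$ of $\ell\circ\ad_{X_i}:\mathfrak{g}\to\mathfrak{g}$, the two agreeing because only the $\distr$-indexed basis vectors contribute. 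The only mildly delicate point, and the one I would check carefully, is the bookkeeping of these traces (and the implicit extension of $\ell$), but no real obstacle arises because the grading of the Carnot algebra trivializes the relevant compositions.
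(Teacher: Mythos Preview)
Your proof is correct and follows essentially the same approach as the paper: both compute the projection $\pi_{\V_\ell} = \pi_{\V_0} + \ell\circ\pi_{\V_0}$, use that the image of $\ad_{X_i}$ lies in $\V_0$ (so $\pi_{\V_0}\circ\ad_{X_i}=\ad_{X_i}$), and conclude that the compatibility condition $\tr(\pi_{\V_\ell}\circ\ad_{X_i})=0$ reduces to $\tr(\ell\circ\ad_{X_i})=0$ once the nilpotent term $\tr(\ad_{X_i})$ (equivalently $\tr(\pi_{\V_0}\circ\ad_{X_i})$) is dropped. Your extra paragraph justifying that every left-invariant complement is a graph $\V_\ell$ and your closing remark on the unambiguity of the trace are welcome clarifications but do not change the argument.
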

\begin{proof}
We use the shorthand $\pi_\ell$ (resp. $\pi_0$) for the two projections $\pi_{\V_\ell}$ and $\pi_{\V_0}$ respectively. Indeed $\pi_\ell = \pi_0 + \ell \circ \pi_0$. The new complement is compatible iff, for all $i=1,\ldots,k$
\begin{equation}
0 = \tr(\pi_\ell \circ \ad_{X_i} \circ \pi_\ell) = \tr(\pi_\ell^2 \circ \mathrm{ad}_{X_i}) = \tr(\pi_\ell \circ \mathrm{ad}_{X_i}) = \tr(\ell \circ \pi_0 \circ \ad_{X_i}) = \tr(\ell \circ \ad_{X_i}),
\end{equation}
where we used the cyclic property of the trace, the fact that projectors are idempotent, that $\V_0$ satisfies the compatibility condition and the fact that the image of $\ad$ is contained in $\V_0$.
\end{proof}
Any $\ell$ such that $\ell(\mathfrak{g}_2) = 0$ gives an example where the compatible complement is non-unique. This class is always non-empty for any Carnot group of step $m \geq 3$, but is trivial for step $2$. 
\begin{example}
We provide an example of non-uniqueness for corank 1 Carnot groups. Choose a structure with
\begin{equation}
\mathfrak{g}_1 = \spn\{X_1,\ldots,X_k\}, \qquad \mathfrak{g}_2 = \spn\{X_0\},
\end{equation}
\begin{equation}
[X_i,X_j] = A_{ij} X_0, \qquad i,j=1,\ldots,k
\end{equation}
for a skew symmetric matrix $A \in \mathfrak{so}(k)$ (the singular values of $A$ define uniquely any Carnot group of type $(k,k+1)$, up to isometries, see for example \cite{LR-Enumerative}). Any other compatible complement is of the form:
\begin{equation}
\V = \spn\left\lbrace X_0 + \sum_{j=1}^k \ell_j X_j\right\rbrace, \qquad \text{with} \qquad \ell \in \ker A.
\end{equation}
Then the number of distinct compatible complements is equal to $\dim \ker A$. In particular, the only case in which we have uniqueness is when $\ker A = \{0\}$, that is a contact Carnot group.
\end{example}


\section{Corank $1$ structures}\label{s:corank1}

Consider a sub-Riemannian structure $(M,\distr,\metr)$ with $\rank \distr= \dim M -1$. We assume that $\distr$ has step $2$, that is $\distr_q^2 = T_qM$ for all $q \in M$. This is a popular class of structures including contact, quasi-contact, and more degenerate ones. Locally $\distr = \ker \eta$ for some one-form $\eta$. The endomorphism $J_\eta: \Gamma(\distr) \to \Gamma(\distr)$ is defined by 
\begin{equation}
g(X,JY) = d\eta(X,Y), \qquad \forall X,Y \in \Gamma(\distr).
\end{equation}
It is skew-adjoint w.r.t. the sub-Riemannian scalar product, namely $J^* = -J$. Moreover, $J \neq 0$. In fact, if this were the case, a simple application of Cartan's formula gives, for any local frame $X_1,\ldots,X_k$ of $\distr$:
\begin{equation}
\eta([X_i,X_j]) = -d\eta(X_i,X_j) + \cancel{X_i(\eta(X_j))}-\cancel{X_j(\eta(X_i))} = g(JX_j,X_i) = 0,
\end{equation}
in contradiction with the step $2$ condition. It follows that $\tr(JJ^*) > 0$. This observation leads to the following construction. If $g \in C^\infty(M)$, then $g \eta$ gives the same distribution. On the other hand one can check that if $\eta' = g\eta$, then $d\eta'|_{\distr} = g d\eta|_{\distr}$, that is $J_{\eta'} = g J_\eta$. We can fix $\eta$, up to a sign, with the condition
\begin{equation}
\| J\|^2 = \tr(JJ^*) =  \sum_{i,j=1}^k g(X_i,JX_j)^2 = 1.
\end{equation}
In this case, we say that the local one-form $\eta$ is \emph{normalized}. The existence of a global, normalized one-form depends on the distribution. In particular there exists a unique (up to a sign), normalized global one-form if and only if $\distr$ is co-orientable (i.e. the quotient bundle $TM/\distr$ is orientable or, equivalently, trivial).
\begin{rmk}
If $TM/\distr$ is not orientable, a global normalized one-form does not exist. This is the case, for example, for the structure on $M = \R^2 \times \R P^1$ with
\begin{equation}
\distr:=\ker(\sin\theta dx - \cos\theta dy),
\end{equation}
where we identify $\R P^1 \simeq \R/\pi\mathbb{Z}$ with the coordinate $\theta$. One still could work with a global object $[\eta]$, section of the bundle whose fibers are $(T^*_q M\setminus\{0\})/\mathbb{Z}_2$. In particular the value $[\eta](q)$ is the equivalence class $\{\pm\eta(q)\}$.  All the results and formulas appearing in the following do not depend on the choice of the representative of the class at each point. For this reason, it is not a restriction to assume that $\eta$ is a well-defined global one-form.
\end{rmk}

We investigate the relation $\Delta_\omega = L^\V$. We first rewrite the compatibility condition $\chi^{(\V,\omega)}=0$ in this case. Observe that any complement is (locally) generated by a vector field $X_0$, that we normalize with $\eta(X_0)=1$.
\begin{lemma}\label{l:compatibility-corank1}
Let $\V = \spn\{X_0\}$, with $\eta(X_0)=1$. The compatibility equation $\chi^{(\omega,\V)} = 0$ is
\begin{equation}
\sum_{i=1}^k d\eta(X_0,X_i) X_i = \grad(\theta) \qquad \text{with} \qquad \theta = \log|\omega(X_1,\ldots,X_k,X_0)|.
\end{equation}
\end{lemma}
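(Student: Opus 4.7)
The strategy is to specialize the general compatibility identity of Theorem~\ref{t:compatibility} to the corank-$1$ setting and then reinterpret the bracket coefficient $c_{0i}^0$ via Cartan's formula.

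First, since the complement has rank one, the double sum in Theorem~\ref{t:compatibility} collapses: writing $X_{k+1}=X_0$, the only index $j\in\{k+1,\ldots,n\}$ is $j=k+1$, so
\begin{equation}
\chi^{(\V,\omega)}= \sum_{i=1}^k c_{0i}^0\, X_i \;+\; \grad(\theta),
\end{equation}
where $c_{0i}^0$ denotes the coefficient of $X_0$ in the decomposition $[X_0,X_i]=\sum_{\alpha=0}^{k} c_{0i}^\alpha X_\alpha$ relative to the frame $X_0,X_1,\ldots,X_k$, and $\theta=\log|\omega(X_1,\ldots,X_k,X_0)|$. Thus the condition $\chi^{(\V,\omega)}=0$ is equivalent to
\begin{equation}
-\sum_{i=1}^k c_{0i}^0\, X_i \;=\; \grad(\theta).
\end{equation}

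Next I would compute $c_{0i}^0$ using the one-form $\eta$ that cuts out $\distr$. Since $X_i\in\distr=\ker\eta$ for $i=1,\ldots,k$ and $\eta(X_0)=1$, applying $\eta$ to the bracket expansion yields
\begin{equation}
\eta([X_0,X_i]) \;=\; c_{0i}^0\,\eta(X_0) + \sum_{j=1}^k c_{0i}^j\,\eta(X_j) \;=\; c_{0i}^0.
\end{equation}
Cartan's formula gives
\begin{equation}
d\eta(X_0,X_i) \;=\; X_0(\eta(X_i))-X_i(\eta(X_0))-\eta([X_0,X_i]) \;=\; -\eta([X_0,X_i]),
\end{equation}
since $\eta(X_i)=0$ and $\eta(X_0)=1$ are constants. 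Combining the two displays gives the key identity $c_{0i}^0 = -d\eta(X_0,X_i)$.

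Substituting into the collapsed compatibility condition produces
\begin{equation}
\sum_{i=1}^k d\eta(X_0,X_i)\,X_i \;=\; \grad(\theta),
\end{equation}
which is exactly the statement of the lemma. The only delicate point is keeping the sign convention for $d\eta$ straight, but that is immediate from Cartan's formula; no further input beyond Theorem~\ref{t:compatibility} and the definition of $\eta$ is needed.
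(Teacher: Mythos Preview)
Your proof is correct and follows essentially the same approach as the paper: specialize the general compatibility condition of Theorem~\ref{t:compatibility} to the corank-$1$ case so that only $c_{0i}^0$ survives, then use Cartan's formula together with $\eta(X_0)=1$ and $\eta(X_i)=0$ to rewrite $c_{0i}^0=-d\eta(X_0,X_i)$. The paper's argument is slightly terser but identical in substance.
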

\begin{proof}
In the local adapted frame $X_1,\ldots,X_k,X_0$, the compatibility equation $\chi^{(\omega,\V)} = 0$ is
\begin{equation}
\sum_{i=1}^k c_{0i}^0  X_i + \grad(\theta) = 0.
\end{equation}
This simple form is due to the fact that the structure has corank $1$. Then we rewrite
\begin{equation}
c_{0i}^0 = \eta([X_0,X_i]) = - d\eta(X_0,X_i) +\cancel{X_0(\eta(X_i))} - \cancel{X_i(\eta(X_0))},
\end{equation}
where we used Cartan's formula and the normalization $\eta(X_0) = 1$.
\end{proof}
The existence and uniqueness of compatible complements in this case depends on the dimension of $\ker J$.
\begin{prop}\label{p:volumeuniquecomplement}
Fix a volume $\omega$, and let $p:=\dim\ker J$. Then:
\begin{itemize}
\item if $p =0$, then $\exists!$ complement $\V$ such that $L^\V = \Delta_\omega$;
\item if $p>0$, the space of $\V$ such that $L^\V = \Delta_\omega$ is either empty, or an affine space over $\ker J$.
\end{itemize}
\end{prop}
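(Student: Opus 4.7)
The plan is to parametrize the space of complements affinely, reduce the compatibility condition of Lemma~\ref{l:compatibility-corank1} to a linear equation on $\distr$ of the form $JW=F$, and then read off existence and uniqueness from the kernel of $J$.

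First I would fix, locally, any reference vector field $Z$ with $\eta(Z)=1$; then every complement is of the form $\V_W=\spn\{Z+W\}$ for a unique horizontal section $W\in\Gamma(\distr)$, and $X_0=Z+W$ still satisfies $\eta(X_0)=1$. The crucial preliminary observation is that the function $\theta$ appearing in Lemma~\ref{l:compatibility-corank1} does \emph{not} depend on $W$: since $W\in\distr=\spn\{X_1,\ldots,X_k\}$, multilinearity and alternation give $\omega(X_1,\ldots,X_k,Z+W)=\omega(X_1,\ldots,X_k,Z)$, so $\theta=\log|\omega(X_1,\ldots,X_k,Z)|$ depends only on $\omega$ and the chosen reference $Z$.

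Next I would rewrite the left-hand side of the compatibility equation using the definition of $J$ and its skew-adjointness. Splitting $d\eta(X_0,X_i)=d\eta(Z,X_i)+d\eta(W,X_i)$, and using $d\eta(W,X_i)=g(W,JX_i)=-g(JW,X_i)$, we obtain
\begin{equation}
\sum_{i=1}^{k}d\eta(X_0,X_i)\,X_i \;=\; \sum_{i=1}^{k}d\eta(Z,X_i)\,X_i \;-\; JW,
\end{equation}
because $JW\in\distr$ expands in the orthonormal frame as $JW=\sum_i g(JW,X_i)X_i$. Setting
\begin{equation}
F \;:=\; \sum_{i=1}^{k}d\eta(Z,X_i)\,X_i \;-\;\grad(\theta)\;\in\;\Gamma(\distr),
\end{equation}
which is a fixed horizontal vector field once $\omega$ and $Z$ are fixed, the compatibility equation of Lemma~\ref{l:compatibility-corank1} becomes the linear condition
\begin{equation}
JW \;=\; F.
\end{equation}

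From here the two cases follow immediately. If $p=\dim\ker J=0$, then $J\colon\distr\to\distr$ is a fiberwise linear isomorphism (it is smooth and invertible at each point, hence $J^{-1}$ is smooth), so $W=J^{-1}F$ is the unique horizontal section solving the equation and $\V=\spn\{Z+J^{-1}F\}$ is the unique compatible complement. If $p>0$, then either $F$ fails to lie in $\image J$ somewhere and there is no global solution (the space of compatible complements is empty), or a particular solution $W_0$ exists and the full set of solutions is the affine space $W_0+\Gamma(\ker J)$, which in turn parametrizes an affine space of complements modeled on $\ker J$.

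The only delicate point in this plan is the identification of $\V$ with $W\in\Gamma(\distr)$ through the normalization $\eta(X_0)=1$, and the verification that this normalization is compatible with the statement of Lemma~\ref{l:compatibility-corank1}; the rest is linear algebra applied fiberwise, using only the skew-adjointness of $J$ and the $W$-independence of $\theta$.
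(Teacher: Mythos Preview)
Your proposal is correct and follows essentially the same approach as the paper: parametrize complements by a horizontal section $W$ (the paper uses $\xi$), observe that $\theta$ is independent of this section, and reduce the compatibility condition of Lemma~\ref{l:compatibility-corank1} to the linear equation $JW=F$, whose solvability is governed by $\ker J$. The paper's treatment of the $p>0$ case is phrased slightly differently (it compares two compatible generators $X_0,X_0'$ and shows $X_0-X_0'\in\ker J$), but this is just the standard ``difference of two solutions lies in the kernel'' argument and is equivalent to yours.
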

\begin{proof}
Let $p=0$, we check that the equation $\chi^{(\omega,\V)} = 0$ has a unique solution for fixed $\omega$. Assume $\V = \spn\{Z + \xi\}$, where $Z$ is transverse to $\distr$ such that $\eta(Z)=1$ and $\xi \in \Gamma(\distr)$. Then, by Lemma~\ref{l:compatibility-corank1}, the compatibility condition gives
\begin{equation}
\grad(\theta) = d\eta(Z+\xi,X_i)X_i =d\eta(Z,X_i)X_i+ g(\xi,JX_i)X_i =d\eta(Z,X_i)X_i -J\xi,
\end{equation}
There is a unique $\xi$ satisfying this equation, namely $\xi = -J^{-1}[\grad(\theta)+d\eta(Z,X_i)X_i]$. Now let $p >0$, and suppose that $X_0$ and $X_0'$ are two different (normalized) generators for complements $\V$ and $\V'$, compatible with the volume $\omega$, with $\eta(X_0)=\eta(X_0')=1$. The normalization implies $X_0-X_0' \in \distr$. According to Lemma~\ref{l:compatibility-corank1}
\begin{equation}
\sum_{i=1}^k d\eta(X_0,X_i)X_i = \grad(\theta), \qquad\text{and}\qquad \sum_{i=1}^k d\eta(X_0',X_i)X_i =\grad(\theta)',
\end{equation}
with $\theta' = \log| \omega(X_1,\ldots,X_k,X_0')|$. Since $X_0-X_0' \in \distr$, 
\begin{equation}
\theta = \log| \omega(X_1,\ldots,X_k,X_0)| = \log |\omega(X_1,\ldots,X_k,X_0')| = \theta'.
\end{equation}
Then we have $d\eta(X_0-X_0',X_i) = 0$, for all $i =1,\ldots,k$. This implies $X_0-X_0' \in \ker J$. Conversely, one can check that for any (normalized) generator $X_0$ of a compatible complement $\V$, any other complement generated by $X_0 \oplus \ker J$ is compatible with the same volume $\omega$.
\end{proof}

\subsection{On natural Riemannian extensions and Popp's volume}

We end this section with a general result about volumes of Riemannian extensions of corank $1$ structures.

\begin{prop}\label{p:naturalriemannian}
Let $(M,\distr,\g)$ a corank $1$ sub-Riemannian structure with normalized one-form $\eta$. Take any local vector field $Z$ such that $\eta(Z) =1$. Consider the (local) Riemannian extension of $(M,\distr,\g)$ obtained by declaring $Z$ an orthonormal vector field. Then the Riemannian volume of this Riemannian extension does not depend on the choice of $Z$ and is equal to Popp's volume.
\end{prop}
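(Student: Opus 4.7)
The plan is to split the statement into its two parts: independence from the choice of $Z$, and the identification with Popp's volume. I will work throughout with a local orthonormal frame $X_1,\ldots,X_k$ of $\distr$ together with a transverse $Z$ with $\eta(Z)=1$, so that $\{X_1,\ldots,X_k,Z\}$ is by definition an orthonormal frame for the Riemannian extension $\g_Z$, and the associated Riemannian volume $\mathcal{R}_Z$ is characterised by $\mathcal{R}_Z(X_1,\ldots,X_k,Z)=1$.

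For independence, I would pick another section $Z'$ with $\eta(Z')=1$. The condition $\eta(Z-Z')=0$ forces $V:=Z'-Z\in\Gamma(\distr)$. For $\g_{Z'}$ the same $X_1,\ldots,X_k$ remain orthonormal in $\distr$, while $Z'$ is declared orthonormal and orthogonal to $\distr$; hence $\mathcal{R}_{Z'}(X_1,\ldots,X_k,Z')=1$. On the other hand, expanding $Z'=Z+V$ with $V=\sum_i a_i X_i$ and using alternation,
\begin{equation}
\mathcal{R}_Z(X_1,\ldots,X_k,Z') = \mathcal{R}_Z(X_1,\ldots,X_k,Z) + \sum_{i=1}^k a_i\,\mathcal{R}_Z(X_1,\ldots,X_k,X_i) = 1.
\end{equation}
So the two volume forms $\mathcal{R}_Z$ and $\mathcal{R}_{Z'}$ agree on the frame $(X_1,\ldots,X_k,Z')$, and therefore coincide.

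For the identification with Popp's volume, I would compute the Popp inner product on the one-dimensional quotient $T_qM/\distr_q$ using the recipe of Section~\ref{s:Popp}. Identify $T_qM/\distr_q\cong\R$ via $\eta$, so that $[Z]$ corresponds to $1$. By Lemma~\ref{l:mont1} applied to the surjection $\pi_2:\distr_q\otimes\distr_q\to T_qM/\distr_q$ with $\pi_2(X_i\otimes X_j)=\eta([X_i,X_j])$, combined with Cartan's formula $\eta([X_i,X_j])=-d\eta(X_i,X_j)=-g(X_i,JX_j)$, the induced squared norm on $T_qM/\distr_q$ is
\begin{equation}
\|v\|^2 = \min\Bigl\{\sum_{ij}(T^{ij})^2 \;\Bigm|\; -\sum_{ij} T^{ij} g(X_i,JX_j) = v\Bigr\} = \frac{v^2}{\|J\|^2},
\end{equation}
where I used that $\sum_{ij}g(X_i,JX_j)^2=\tr(JJ^*)=\|J\|^2$ and that minimising a Euclidean norm under one linear constraint gives the square of the constraint divided by the squared norm of the defining covector. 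The normalisation $\|J\|=1$ of the corank-$1$ setting gives $\|[Z]\|^2=1$, so $[Z]$ is a unit vector in $T_qM/\distr_q$ endowed with its Popp inner product.

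The rest is bookkeeping. The frame $(X_1,\ldots,X_k,[Z])$ is orthonormal in the inner-product space $\hat{M}_q=\distr_q\oplus T_qM/\distr_q$, and $(X_1,\ldots,X_k,Z)$ is adapted to the flag $\{0\}\subset\distr_q\subset T_qM$, so by definition of the canonical isomorphism $\theta_q$ of Lemma~\ref{l:mont2},
\begin{equation}
\popp_q(X_1,\ldots,X_k,Z)=\omega_q(X_1,\ldots,X_k,[Z])=1=\mathcal{R}_Z(X_1,\ldots,X_k,Z),
\end{equation}
proving $\mathcal{R}_Z=\popp$ on the common frame, hence globally. The main (though minor) delicate point is keeping the identifications straight: making sure the $\R$-identification of $T_qM/\distr_q$ via $\eta$ matches the direction of $[Z]$ and that the normalisation $\|J\|=1$ is what converts the minimisation into the clean value $1$; everything else is linear algebra on top of the Popp construction already recalled in Section~\ref{s:Popp}.
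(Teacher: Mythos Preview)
Your proof is correct. The paper's own proof is a one-line citation of an explicit formula for Popp's volume in the contact case from \cite{nostropopp}, together with the remark that the argument there only needs $\tr(JJ^*)\neq 0$ and hence carries over verbatim to the general corank~$1$ setting. You instead unpack the Popp construction of Section~\ref{s:Popp} directly: you compute the induced inner product on $T_qM/\distr_q$ via the minimisation in Lemma~\ref{l:mont1}, identify the constraint vector with the entries $g(X_i,JX_j)$ through Cartan's formula, and use the normalisation $\|J\|=1$ to conclude $\|[Z]\|=1$. Your route is more self-contained and explanatory within the paper's own framework, at the cost of a short computation; the paper's route is shorter but defers the content to an external reference.
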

\begin{proof}
This statement is an application of the explicit formula of \cite[Proposition 16]{nostropopp}, stated in the contact case, whose proof holds unchanged for corank $1$ sub-Riemannian structures (it only requires that $\tr(JJ^*) \neq 0$).
\end{proof}
Proposition~\ref{p:naturalriemannian} gives another reason for considering Popp's volume in the corank $1$ case (besides the fact, valid in general, that Popp's volume is N-intrinsic). We now specialize Proposition~\ref{p:volumeuniquecomplement} to the contact and quasi-contact cases, giving an answer to {\bf Q2} and {\bf Q3} of Section~\ref{s:intro}.

\section{Contact structures}\label{s:contact}

Let $M$ be a smooth manifold with $\dim M = 2d +1$. A corank sub-Riemannian structure is \emph{contact} if $\ker J = 0$. This is the least degenerate case. Since $M$ is odd dimensional, there exists a unique vector $Z$, called \emph{Reeb vector field}, such that $d\eta(Z,\cdot) = 0$ and $\eta(Z) =1$.

The next corollaries follow from (the proof of) Proposition~\ref{p:volumeuniquecomplement} and give positive answers to {\bf Q2} and {\bf Q3} mentioned in Section~\ref{s:intro}.
\begin{cor}
For any volume $\omega$ there exists a unique complement $\V$ such that $L^\V = \Delta_\omega$. This complement is generated by the vector
\begin{equation}
X_0 = \Z -J^{-1}\grad(\theta),
\end{equation}
where $\theta = \log|\omega(X_1,\ldots,X_k,\Z)|$.
\end{cor}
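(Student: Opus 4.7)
The plan is to deduce the corollary by specializing Proposition~\ref{p:volumeuniquecomplement} to the contact setting, where the Reeb vector field $Z$ provides a canonical choice of transverse vector with $\eta(Z)=1$ and $d\eta(Z,\cdot)=0$.

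First I would invoke Proposition~\ref{p:volumeuniquecomplement} with $p=\dim\ker J=0$, which is precisely the contact case, to obtain the existence and uniqueness of a complement $\V$ such that $L^\V=\Delta_\omega$. It remains only to identify the explicit generator. Write $X_0$ as the unique generator of this complement with $\eta(X_0)=1$; since $X_0$ and $Z$ differ by a horizontal vector, we may write $X_0=Z+\xi$ for some $\xi\in\Gamma(\distr)$. Crucially, the function $\theta=\log|\omega(X_1,\dots,X_k,X_0)|$ coincides with $\log|\omega(X_1,\dots,X_k,Z)|$, because $\omega(X_1,\dots,X_k,X_0)=\omega(X_1,\dots,X_k,Z)+\omega(X_1,\dots,X_k,\xi)$ and the last term vanishes as $\xi\in\Gamma(\distr)=\spn\{X_1,\dots,X_k\}$.

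Next I would apply Lemma~\ref{l:compatibility-corank1}: the compatibility condition reads
\begin{equation}
\grad(\theta)=\sum_{i=1}^{k} d\eta(X_0,X_i)X_i=\sum_{i=1}^{k} d\eta(Z,X_i)X_i+\sum_{i=1}^{k} d\eta(\xi,X_i)X_i.
\end{equation}
The first sum on the right vanishes by the defining property $d\eta(Z,\cdot)=0$ of the Reeb field. The second sum equals $\sum_i \metr(\xi,JX_i)X_i=-\sum_i \metr(J\xi,X_i)X_i=-J\xi$, using skew-adjointness of $J$ and the fact that $J\xi\in\Gamma(\distr)$ expands in the orthonormal frame as $J\xi=\sum_i \metr(J\xi,X_i)X_i$. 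Hence the compatibility equation collapses to
\begin{equation}
\grad(\theta)=-J\xi.
\end{equation}

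Since the contact hypothesis $\ker J=0$ makes $J:\distr\to\distr$ pointwise invertible (and smoothly so), this uniquely solves $\xi=-J^{-1}\grad(\theta)$, giving $X_0=Z-J^{-1}\grad(\theta)$ as claimed. There is no real obstacle here; the only mildly delicate point is checking that $\theta$ is independent of the choice of transverse representative (Reeb vs.\ $X_0$), which follows from the horizontality of $\xi$. Uniqueness of $\V$ is inherited directly from Proposition~\ref{p:volumeuniquecomplement}.
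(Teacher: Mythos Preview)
Your proof is correct and follows essentially the same route as the paper, which simply states that the corollary follows from the proof of Proposition~\ref{p:volumeuniquecomplement}: you write $X_0=Z+\xi$, apply Lemma~\ref{l:compatibility-corank1}, and exploit the Reeb property $d\eta(Z,\cdot)=0$ to reduce the compatibility equation to $\grad(\theta)=-J\xi$. Your explicit check that $\theta$ is the same whether computed with $X_0$ or $Z$ is a helpful clarification that the paper leaves implicit.
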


Contact sub-Riemannian structures have a natural Riemannian extension, obtained by declaring $\Z$ a unit vector orthonormal to $\distr$. By Proposition~\ref{p:naturalriemannian}, Popp's volume is the Riemannian volume of this extension.
\begin{cor}\label{c:reebpopp}
Let $\popp$ be the Popp's volume. The unique complement $\V$ such that $L^{\V} = \Delta_{\popp}$ is generated by the Reeb vector field. Moreover, $\popp$ is the unique volume (up to constant rescaling) with this property.
\end{cor}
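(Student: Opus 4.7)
The plan is to derive this corollary as a direct consequence of the previous corollary (giving the explicit generator $X_0 = Z - J^{-1}\grad(\theta)$ for any volume $\omega$), together with Proposition~\ref{p:naturalriemannian} on the Riemannian volume of the natural Riemannian extension, and Lemma~\ref{l:uniqueness} on uniqueness of the compatible volume for a fixed complement.

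First, I would apply the previous corollary with the choice $\omega = \popp$. This gives that the unique complement $\V$ with $L^\V = \Delta_\popp$ is generated by $X_0 = Z - J^{-1}\grad(\theta)$, where $\theta = \log |\popp(X_1,\ldots,X_k,Z)|$. The key observation is that this $\theta$ vanishes. Indeed, by Proposition~\ref{p:naturalriemannian}, Popp's volume coincides with the Riemannian volume of the (local) Riemannian extension obtained by declaring the Reeb field $Z$ a unit vector orthogonal to $\distr$. For such a Riemannian extension, the frame $X_1,\ldots,X_k,Z$ is orthonormal, so the associated Riemannian volume evaluated on this frame equals $\pm 1$. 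Hence $\theta = 0$, which gives $\grad(\theta) = 0$ and consequently $X_0 = Z$. This proves the first assertion: $\V = \spn\{Z\}$.

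For the uniqueness of $\popp$ (up to constant rescaling) among volumes for which the Reeb-generated complement is compatible, I would proceed as follows. Suppose $\omega'$ is another volume such that $L^{\V} = \Delta_{\omega'}$ with $\V = \spn\{Z\}$. Since we already have $L^{\V} = \Delta_\popp$ for the same complement $\V$, Lemma~\ref{l:uniqueness} immediately gives $\omega' = c\,\popp$ for some nonzero constant $c$.

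No major obstacle is expected; this corollary essentially packages the previous corollary (existence and explicit form of the unique complement compatible with a fixed volume) with the normalization property of Popp's volume under the natural Riemannian extension. The only slightly delicate point is verifying $\theta = 0$, which reduces to invoking Proposition~\ref{p:naturalriemannian} correctly.
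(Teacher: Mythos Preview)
Your proposal is correct and follows essentially the same approach as the paper: show that $\theta$ is constant (so $\grad(\theta)=0$ and $X_0=Z$), then invoke Lemma~\ref{l:uniqueness} for the uniqueness part. The only minor difference is that the paper justifies the constancy of $\theta$ by citing an external reference (\cite[Rmk.~4]{nostropopp}) rather than Proposition~\ref{p:naturalriemannian}, but these are equivalent routes to the same fact that $\popp(X_1,\ldots,X_k,Z)$ is constant.
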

\begin{proof}
By \cite[Rmk. 4]{nostropopp}, $\theta=1$. Then $\grad (\theta )=0$ and $X_0 = \Z$. The uniqueness follows from Lemma~\ref{l:uniqueness}.
\end{proof}

\subsection{Integrability conditions}\label{s:integrability}
The inverse problem, namely for a fixed complement $\V$, find a volume $\omega$ such that $L^\V = \Delta_\omega$ is more complicated (and, in general, has no solution). In the contact case we find an explicit integrability condition.
\begin{prop}\label{p:integrability}
Let $\V = \spn\{X_0\}$. Define the one-form $\alpha := \frac{i_{X_0} d\eta}{\eta(X_0)}$ and the function $g=\frac{d\alpha \wedge\eta \wedge (d\eta)^{d-1}}{\eta\wedge (d\eta)^d }$. Then there exists a volume $\omega$ such that $L^{\V} = \Delta_\omega$ if and only if
\begin{equation}
d\alpha - dg \wedge \eta - g d\eta = 0.
\end{equation}
In this case, $\omega$ is unique up to constant rescaling.
\begin{rmk}
If $\V$ is the Reeb direction, $\alpha = 0$, the integrability condition is satisfied and we recover Corollary~\ref{c:reebpopp}.
\end{rmk}
\end{prop}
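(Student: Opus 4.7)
The plan is to rewrite the compatibility condition of Lemma~\ref{l:compatibility-corank1} as an equation between honest 1-forms on $M$, and then exploit the codimension-one nature of $\distr$ together with the contact volume $\eta\wedge(d\eta)^d$. First, rescale so that $\eta(X_0)=1$; then $\alpha=i_{X_0}d\eta$ and Lemma~\ref{l:compatibility-corank1} reads $\sum_i\alpha(X_i)X_i = \grad(\theta)$ for some $\theta\in C^\infty(M)$. Since $X_1,\ldots,X_k$ is an orthonormal basis of $\distr=\ker\eta$, this is equivalent to $(\alpha-d\theta)|_\distr=0$; because $\distr$ has codimension one in $TM$, this is in turn equivalent to the existence of a function $g\in C^\infty(M)$ with
\begin{equation}
\alpha = d\theta + g\,\eta.
\end{equation}
So the question becomes: when does such a pair $(\theta,g)$ exist?

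For the forward direction, differentiating the displayed identity gives $d\alpha = dg\wedge\eta + g\,d\eta$, which is precisely the claimed integrability condition. Moreover, wedging with $\eta\wedge(d\eta)^{d-1}$ annihilates the $dg\wedge\eta$ term (because $\eta\wedge\eta=0$) and leaves $d\alpha\wedge\eta\wedge(d\eta)^{d-1}=g\,\eta\wedge(d\eta)^d$. Since the contact condition forces $\eta\wedge(d\eta)^d$ to be nowhere zero, this shows that $g$ is uniquely determined and equals the function in the statement.

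For the converse, suppose $d\alpha-dg\wedge\eta-g\,d\eta=0$ with $g$ defined by the formula in the proposition. Set $\beta:=\alpha-g\,\eta$; the hypothesis is exactly $d\beta=0$. By the Poincar\'e lemma, locally (and globally when $H^1_{\mathrm{dR}}(M)$ permits) there exists $\theta\in C^\infty(M)$ with $d\theta=\beta$, hence $\alpha = d\theta + g\,\eta$. Restricting to $\distr$ recovers the compatibility equation of Lemma~\ref{l:compatibility-corank1}, and defining $\omega$ by $\omega(X_1,\ldots,X_k,X_0)=e^{\theta}$ on any adapted oriented frame (which is consistent under change of frame, since on overlaps the defining function $\theta$ depends only on $\omega$ and not on the choice of frame) produces the desired volume form. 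The uniqueness of $\omega$ up to constant rescaling is then an immediate application of Lemma~\ref{l:uniqueness}.

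The main obstacle is conceptual rather than computational: one has to recognize that the sub-Riemannian condition $(\alpha-d\theta)|_\distr=0$ can be promoted to an exact identity $\alpha=d\theta+g\eta$ precisely because $\distr$ is corank one, and that the single wedge identity with $\eta\wedge(d\eta)^{d-1}$ is enough to pin down $g$. With these two observations in hand, the integrability condition is the obstruction to the closed form $\alpha-g\,\eta$ being $d$ of a primitive.
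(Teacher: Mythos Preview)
Your proof is correct and follows essentially the same approach as the paper: both reduce the compatibility condition of Lemma~\ref{l:compatibility-corank1} to the equation $\alpha=d\theta+g\,\eta$ (up to a sign convention for $g$), derive the unique expression for $g$ by wedging with $\eta\wedge(d\eta)^{d-1}$, and identify the integrability condition as the closedness of $\alpha-g\,\eta$. The only minor difference is that the paper first writes $X_0=\Z+\xi$ and computes $\alpha$ via $-J\xi$, whereas you go directly to $\alpha=i_{X_0}d\eta$; you are also slightly more explicit that the Poincar\'e step is local (both proofs are local in this respect).
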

\begin{proof}
We assume $\V = \spn\{\Z + \xi\}$ for some $\xi \in \Gamma(\distr)$. This is equivalent to normalizing $X_0$ in such a way that $\eta(X_0) =1$. A volume $\omega$ is uniquely specified by the function $\theta = \log|\omega(X_1,\ldots,X_k,\Z)|$. The same proof of Proposition~\ref{p:volumeuniquecomplement} leads to the compatibility condition
\begin{equation}
-J \xi = \grad(\theta).
\end{equation}
We have to solve the following problem: given an horizontal vector field $X$, find a function $\theta \in C^\infty(M)$ such that $X = \grad(\theta) $. With $X$ we can associate a one-form $\alpha$ such that
\begin{equation}
\alpha(Y) = g(X,Y), \qquad \forall Y \in \Gamma(\distr).
\end{equation}
For our case, $X=-J\xi$. Then $\alpha(Y) = g(-J\xi,Y) = -g(Y,J\xi) = -d\eta(Y,\xi) = i_\xi Y = i_{X_0} d\eta (Y)$ and in this case we set $\alpha :=i_{X_0} d\eta$. In the language of forms, the above problem is equivalent to $\alpha|_{\distr} = d\theta|_{\distr}$, and has solution iff
\begin{equation}
\alpha + g\eta = d \theta, \qquad \text{ for some } g \in C^\infty(M).
\end{equation}
A (local) necessary and sufficient condition for the existence of such $\theta$ is that there exists a closed representative in the class $\alpha + g\eta$. Namely $g$ must satisfy
\begin{equation}\label{eq:compat}
d \alpha + dg\wedge \eta + g d\eta = 0.
\end{equation}
If such a $g$ exists, is uniquely expressed in terms of $\alpha$. Taking the appropriate wedge product with $\eta$, and then with $d\eta$ $d-1$ times, we get
\begin{equation}
g \eta\wedge (d\eta)^d = - \eta \wedge d\alpha \wedge (d\eta)^{d-1} \qquad \Rightarrow \qquad g = -\frac{d\alpha \wedge\eta \wedge (d\eta)^{d-1}}{\eta\wedge (d\eta)^d }.
\end{equation}
The $n$-form at the denominator is never zero, it is equivalent to the non-degeneracy assumption $\ker J =\{0\}$. We have to check that such a $g$ gives a closed representative, i.e. that $f$ solves Eq.~\eqref{eq:compat}. Plugging the explicit expression of $g$ into Eq.~\eqref{eq:compat} gives the condition of our statement. Uniqueness follows from Lemma~\ref{l:uniqueness}.
\end{proof}
\begin{rmk}
In the three dimensional case $d=1$ and the integrability condition is simplified. Fix a basis $X_1,X_2$. We can assume $d\eta(X_1,X_2) = 1$ and we get for the function $g$
\begin{equation}
g = -\frac{d\alpha \wedge\eta \wedge (d\eta)^{d-1}}{\eta\wedge (d\eta)^d } = -\frac{d\alpha \wedge\eta(X_1,X_2,\Z)}{\eta\wedge d\eta (X_1,X_2,\Z)} = - d\alpha(X_1,X_2).
\end{equation}
One can check that, when restricted to $\distr$, Eq.~\eqref{eq:compat} is always satisfied. Then the only non-trivial condition is given by taking the contraction with the Reeb field: $i_\Z d\alpha + d(d\alpha(X_1,X_2))= 0$.
\end{rmk}

\section{Quasi-contact structures} \label{s:quasicontact}

Let $M$ a smooth manifold with $\dim M =2d+2$. A corank $1$ sub-Riemannian structure $(M,\distr,\metr)$ is \emph{quasi-contact} if $\distr_0:=\ker J$ has positive dimension. We assume \emph{minimal degeneration}, that is $\dim \distr_0 = 1$.

\subsection{The quasi-Reeb vector field}\label{s:quasireeb} 
We discuss the construction of a ``canonical'' vector transverse to $\distr$, analogous to the Reeb vector field, in the quasi-contact case. We learned this construction from \cite{gregquasi} (where a different normalization is used).

In the construction that follows we always assume that $J_q :\distr_q \to \distr_q$ has distinct eigenvalues at all points on $M$. This is true for a generic quasi-contact structure and outside a codimension 3 closed stratified subset. Thus, $J$ has distinct imaginary eigenvalues $\pm i \lambda_j$ with 
\begin{equation}
0= \lambda_0 < \lambda_1 < \dots < \lambda_d \in \R.
\end{equation}
where $\lambda_i$ are smooth functions on $M$. The normalization condition gives $2\sum_{i=1}^d \lambda_i^2 =1$. 

For any $j$, denote by $\distr_j \subset \distr$ the $2$-dimensional (real) eigenspace associated with the eigenvalues $\pm i\lambda_j$ (we use the same notation for the eigenspaces and the associated sub-bundle). By definition of $J$, the $\distr_j$ are mutually orthogonal, for all $i=0,1,\ldots,d$. We can choose generators $X_j,Y_j$ of each distribution $\distr_j$ by taking smooth families of (generalized) eigenvectors of $J$. Namely
\begin{equation}
J (X_j+ i Y_j) = i \lambda_j (X_j+iY_j), \qquad \forall j =1,\ldots,d.
\end{equation}
Equivalently
\begin{equation}
J X_j = - \lambda_j Y_j, \qquad J Y_j = \lambda_j X_j, \qquad \forall j=1,\ldots,d.
\end{equation}
Notice that $\lambda_j g(X_j,Y_j) =  g(J Y_j,Y_j) = 0$ since $J$ is skew-symmetric. Then $X_j$ and $Y_j$ are orthogonal. We choose them to be orthonormal. Moreover, let $W \in \ker J$ of unit norm. In particular, $X_1,Y_1,\ldots,X_d,Y_d,W$ is a local orthonormal frame for $\distr$.
\begin{rmk}
In terms of the orthonormal frame $X_1,Y_1,\ldots,X_d,Y_d,W \in \Gamma(\distr)$, the matrix representing $J$ is
\begin{equation}
J \simeq \begin{pmatrix}
\lambda_1 K & & & \\
& \ddots & &\\
& & \lambda_d K&\\
& & & 0
\end{pmatrix}, \qquad K: = \begin{pmatrix}
0 & 1 \\
-1& 0
\end{pmatrix}.
\end{equation}
\end{rmk}

Choose $j \in \{1,\ldots,d \}$. Let $\distr_j^2:=[\distr_j,\distr_j]+\distr_j$ be the distribution generated by taking all possible brackets of sections of $\distr_j$ of length up to $2$. Notice that $[\distr_j,\distr_j]$ is transverse to $\distr$. In fact, for any $V,W \in \Gamma(\distr_j)$:
\begin{equation}
\eta([V,W]) = -d\eta(V,W) + \cancel{V(\eta(W))} - \cancel{W(\eta(V))} = g(W,JV)
\end{equation}
and we conclude using the fact that $J|_{\distr_j}$ is non-degenerate. Then $[\distr_j,\distr_j]$ is a three dimensional distribution, generated by the orthonormal vector fields $X_j,Y_j$ defined above and their bracket $[X_j,Y_j]$.
\begin{definition}\label{d:quasireeb}
The \emph{quasi-Reeb} vector field $\Z_j$ is the unique vector field such that
\begin{equation}
\Z_j \in [\distr_j,\distr_j], \qquad d\eta(\Z_j,\distr_j) = 0, \qquad \eta(\Z_j) = 1.
\end{equation}
\end{definition}
\begin{prop}\label{p:quasireebformula}
In terms of the orthonormal generators $X_j,Y_j$ of $\distr_j$, we have 
\begin{equation}
\Z_j = - \frac{1}{\lambda_j}[X_j,Y_j]  +\frac{d\eta([X_j,Y_j],Y_j)}{\lambda^2_j} X_j -\frac{d\eta([X_j,Y_j],X_j)}{\lambda^2_j} Y_j.
\end{equation}
\end{prop}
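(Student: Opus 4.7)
The plan is to compute the coefficients of $\Z_j$ in the frame $\{X_j, Y_j, [X_j,Y_j]\}$ of $\distr_j^2 = [\distr_j,\distr_j] + \distr_j$ by enforcing in order the three defining conditions of Definition~\ref{d:quasireeb}.

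First I would observe that since $\Z_j \in [\distr_j,\distr_j]$ and the three vectors $X_j, Y_j, [X_j,Y_j]$ span this $3$-dimensional distribution (as noted just before Definition~\ref{d:quasireeb}, $[X_j,Y_j]$ is transverse to $\distr$), we may write uniquely
\begin{equation}
\Z_j = a X_j + b Y_j + c [X_j,Y_j]
\end{equation}
for smooth functions $a,b,c$, and determine them from the two remaining conditions $d\eta(\Z_j,X_j) = d\eta(\Z_j,Y_j) = 0$ and $\eta(\Z_j) = 1$.

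Next I would extract the normalization $\eta(\Z_j)=1$. Since $X_j,Y_j \in \Gamma(\distr) = \ker\eta$, only the third term contributes, and by Cartan's formula
\begin{equation}
\eta([X_j,Y_j]) = -d\eta(X_j,Y_j) + X_j(\eta(Y_j)) - Y_j(\eta(X_j)) = -d\eta(X_j,Y_j) = -g(X_j,JY_j) = -\lambda_j,
\end{equation}
using the definition of $J$ and $JY_j = \lambda_j X_j$. This gives $c = -1/\lambda_j$ immediately.

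For the remaining two conditions, I would use $d\eta(V,W) = g(V,JW)$ for horizontal $V,W$ together with $JX_j = -\lambda_j Y_j$ and $JY_j = \lambda_j X_j$ and the orthonormality of $X_j,Y_j$. This yields $d\eta(X_j,X_j) = d\eta(Y_j,Y_j) = 0$, $d\eta(Y_j,X_j) = -\lambda_j$, and $d\eta(X_j,Y_j) = \lambda_j$. Substituting $\Z_j = aX_j + bY_j + c[X_j,Y_j]$ into the two orthogonality equations produces the linear system
\begin{equation}
-\lambda_j b + c\, d\eta([X_j,Y_j], X_j) = 0, \qquad \lambda_j a + c\, d\eta([X_j,Y_j], Y_j) = 0,
\end{equation}
which, together with $c = -1/\lambda_j$, gives exactly the coefficients in the claimed formula. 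The step is entirely routine linear algebra; the only thing to check carefully is the sign conventions in Cartan's formula and the identification $d\eta|_\distr(V,W) = g(V,JW)$, which is the principal (minor) obstacle.
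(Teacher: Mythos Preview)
Your proof is correct and follows essentially the same route as the paper's own argument: write $\Z_j$ in the frame $\{X_j,Y_j,[X_j,Y_j]\}$ and solve the resulting linear system using Cartan's formula and the relations $JX_j=-\lambda_j Y_j$, $JY_j=\lambda_j X_j$. The only cosmetic difference is the order in which you impose the three defining conditions (you use the normalization $\eta(\Z_j)=1$ first to pin down $c$, whereas the paper extracts it last).
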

\begin{proof}
It's a linear algebra computation with Cartan's formula. By the first condition
\begin{equation}
\Z_j = \alpha X_j + \beta Y_j + \gamma [X_j,Y_j], \qquad \alpha,\beta,\gamma \in C^\infty(M).
\end{equation}
Using the second condition with $X_j \in \distr_j$ we get
\begin{equation}
0 = d\eta (\Z_j,X_j) = \beta d\eta(Y_j,X_j) + \gamma d\eta([X_j,Y_j],X_j), \qquad \Rightarrow  \qquad \beta = \gamma\frac{d\eta([X_j,Y_j],X_j)}{\lambda_j}.
\end{equation}
Using the second condition with $Y_j \in \distr_j$ we get
\begin{equation}
0 = d\eta (\Z_j,Y_j) = \alpha d\eta(X_j,Y_j) + \gamma d\eta([X_j,Y_j],Y_j), \qquad \Rightarrow  \qquad \alpha = -\gamma\frac{d\eta([X_j,Y_j],Y_j)}{\lambda_j}.
\end{equation}
We conclude the proof using the third condition (normalization), which gives
\[
1 = \eta(\Z_j) = \gamma\eta([X_j,Y_j]) = -\gamma d\eta(X_j,Y_j) = \gamma g(Y_j,JX_j) = -\gamma \lambda_j. \qedhere
\]
\end{proof}
\begin{rmk}
If the structure is nilpotent (of step $2$), any Lie bracket of length greater than $2$ vanishes, and by the explicit formula above we have
\begin{equation}
\Z_j = - \frac{1}{\lambda_j}[X_j,Y_j].
\end{equation}
If the structure is also a Carnot group, $[X_j,Y_j]$ belongs to the second stratum $\mathfrak{g}_2$, for any $j$. Since there is a unique vector field in the second stratum such that $\eta(\Z_j) = 1$, it follows that $\Z_j$ is the same for all $j \in \{1,\ldots,d\}$. Then for quasi-contact Carnot groups this construction is canonical and does not depend on the choice of $j$.
\end{rmk}
\begin{rmk}
At the points on the manifold where the eigenvalues of $J$ cross, the quasi-Reeb vector fields $\Z_j$ are no longer well defined. However, the volume of the Riemannian extensions obtained by declaring $\Z_j$ a unit vector orthogonal to $\distr$, can be extended to a well defined, global smooth volume. In fact, as a consequence of Proposition~\ref{p:naturalriemannian}, the Riemannian volume of any one of these extensions coincides with Popp's volume (as $\eta(\Z_j) = 1$), and the latter is clearly a well defined volume form on the whole manifold.
\end{rmk}
\begin{rmk}
The construction above defines $d$ different vector fields, defined on the regions of the manifold where the associated eigenvalue $i\lambda_j$ has algebraic multiplicity equal to $1$. In some cases one can choose a canonical, smooth $\Z_j$: for example when the smallest eigenvalue has globally minimal multiplicity. For the case $d=1$ there is always such a choice. In general, one could define a unique natural transverse vector by taking the average $\Z:=\frac{1}{d}\sum_{j} \Z_j$. Its regularity properties, however, are not clear when the the spectrum of $J$ is not simple.
\end{rmk}

\subsection{An example of non-existence}

Proposition~\ref{p:volumeuniquecomplement} states that, in the contact case (i.e. when $J$ is non-degenerate), for any volume $\omega$ there exists a unique complement $\V$. In the quasi-contact case the situation changes dramatically. Compatible complements are never unique as soon as $\ker J \neq \{0\}$. Surprisingly, compatible complements might not exist. We discuss an example where for a given volume (Popp's one) there are \emph{no} compatible complements.

\begin{example}
Consider the quasi-contact structure on $M =\mathbb{R}^4$, with coordinates $(x,y,z,w)$, defined by
\begin{equation}
\eta = \frac{g}{\sqrt{2}} \left(dw - \frac{y}{2} dx+\frac{x}{2} dy\right),
\end{equation}
where $g$ is any monotone, strictly positive function (e.g. $g = e^z$).
The metric is defined by the following global orthonormal frame for $\distr$:
\begin{equation}
X = \frac{1}{\sqrt{g}}\left( \partial_x + \frac{1}{2}y \partial_w \right), \qquad Y = \frac{1}{\sqrt{g}}\left(\partial_y - \frac{1}{2} x \partial_w\right), \qquad \Z = \frac{1}{\sqrt{g}} \partial_z.
\end{equation}
This is essentially $\mathbb{H}_3 \oplus \mathbb{R}$ with a scaled metric. One can check that
\begin{equation}
d \eta = \frac{\dot{g}}{\sqrt{2}} \left(dz \wedge dw + \frac{y}{2} dx \wedge dz -\frac{x}{2} dy \wedge dz \right)  + \frac{g}{\sqrt{2}}  dx \wedge dy,
\end{equation}
where the $\dot{g} = \partial_z g$. The representative matrix $A$ of $d\eta$ in coordinates $(x,y,z,w)$ is
\begin{equation}
A \simeq  \frac{1}{\sqrt{2}}\begin{pmatrix}
0 & g & \tfrac{y}{2}\dot{g} & 0 \\
-g & 0 & -\tfrac{x}{2} \dot{g} & 0 \\
-\tfrac{y}{2} \dot{g} & \tfrac{x}{2}\dot{g} & 0 & \dot{g} \\
0 & 0 & -\dot{g} & 0
\end{pmatrix}, \qquad \text{with} \qquad \det A = \frac{ g^2 \dot{g}^2}{4} > 0.
\end{equation}
In particular, $\ker d\eta =\{0\}$. Moreover, one can check that
\begin{equation}
J X = -\frac{1}{\sqrt{2}}Y, \qquad J Y = \frac{1}{\sqrt{2}} X, \qquad J \Z = 0.
\end{equation}
Thus we have the correct normalization
\begin{equation}
\|J\|^2 = \tr(JJ^*) = 1.
\end{equation}
Choose $\omega = \popp$, the Popp's volume. We look for a complement $\V = \spn\{X_0\}$ such that $L^\V = \Delta_\omega$. We can assume, rescaling $X_0$, that $\eta(X_0) = 1$. Using the explicit formula from \cite{nostropopp}, we have
\begin{equation}
\theta_{\popp} = \log| \popp(X_1,\ldots,X_k,X_0)| = \frac{1}{\sqrt{\sum_{i,j=1}^k d\eta(X_i,X_j)^2}} = \frac{1}{\|J\|} = 1.
\end{equation}
We look for solutions of the equation $\chi^{(\popp,\V)} = 0$. Since $\theta_{\popp} = 1$, we get from Lemma~\ref{l:compatibility-corank1}
\begin{equation}
d\eta(X_0,X_i) = 0, \qquad \forall i =1,\ldots,k.
\end{equation}
This implies $X_0 \in \ker d\eta$, but $d\eta$ has trivial kernel.
\end{example}
\begin{rmk}
It can be shown that $L^\V \neq \Delta_\popp$ is a generic property for quasi-contact structures. The precise statement and the proof involves techniques from transversality theory and is out of the scope of this paper.
\end{rmk}

This result in the case $d=1$ (that is $\dim M =4$, the lowest dimension for quasi-contact structures) is particularly surprising, for the following reason. The nilpotent approximation of any corank $1$ sub-Riemannian structure is a corank $1$ Carnot group. This is a sub-Riemannian structure on $\R^n$ that, in coordinates $(x,z) \in \R^{n-1} \times \R$ is generated by the global orthonormal frame:
\begin{equation}\label{eq:frameC}
X_i = \frac{\partial}{\partial x_i} - \frac{1}{2}\sum_{j=1}^{n-1}A_{ij} x_j \frac{\partial}{\partial z}, \qquad i =1,\ldots,n-1,
\end{equation}
where $A \in \mathfrak{so}(n-1)$. Isometry classes of corank $1$ Carnot groups are determined by the string $0\leq \alpha_1 < \ldots < \alpha_p$ of the $p$ distinct non-negative singular values of $A$, up to multiplication for a global constant, and their geometric multiplicities (see \cite{LR-Enumerative} for corank $1$ structures, and \cite[Rmk. 1]{ALG-path} for higher corank). In particular, for $n =4$, $A \in \mathfrak{so}(3)$ there is a unique such a string: $(0,1)$ where $0$ has multiplicity $1$ and $1$ has multiplicity $2$. In particular, any corank $1$ Carnot group in dimension $n=4$ is isometric to the one defined by the frame~\eqref{eq:frameC}, with:
\begin{equation}
A = \begin{pmatrix}
0 & 1 & 0 \\
-1 & 0 & 0 \\
0 & 0 & 0
\end{pmatrix}.
\end{equation}
It follows that any corank $1$ sub-Riemannian in dimension $4$ is quasi-contact and equi-nil\-po\-tentizable. In this case, we proved that there exists a unique N-intrinsic volume (up to scaling) and is given by Popp's one, and a unique N-intrinsic Laplacian $\Delta_\popp$ (see Theorem~\ref{t:equinilp}). In our example, this unique N-intrinsic Laplacian has \emph{no} compatible complement or, in other words, the macroscopic diffusion operator has no microscopic counterpart.


\section{Convergence of random walks}\label{a:randomwalk}

We let $M$ be a smooth, geodesically complete (sub)-Riemannian manifold. Our goal here is to give fairly general conditions for a sequence of random walks on $M$ to converge. In particular, we will work with a larger class of random walks than those treated elsewhere in the paper, which will require some differences in notation. In this section we work in the general (sub)-Riemannian framework, so that geodesics are specified by the choice of an initial covector $\lambda \in T^*M$ and the (sub)-Riemannian distribution has rank $k \leq n = \dim M$.

\subsection{A general class of random walks}

By a random walk, in this section, we mean a process $X_t$ that travels along a geodesic (with constant speed) for a fixed length of time (say $\hh>0$), at which time a new geodesic is chosen at random (and independently of the particle's past), which is then traveled for time $\hh$, and this procedure repeats indefinitely. The result is that $X_0, X_{\hh},X_{2\hh},\ldots$ is a Markov chain on $M$, and for $t\in(i\hh,(i+1)\hh)$, $X_t$ interpolates between $X_{i\hh}$ and $X_{(i+1)\hh}$ along a geodesic between them. We distinguish this from the related scheme of following a randomly chosen geodesic for a random, exponentially distributed length of time, at which point a new geodesic is chosen and a new (independent) exponential clock is started, and so on, which we will refer to as a random flight. (The logic being that a walk involves regular steps, and one does not turn midstep, whereas a flight seems to evoke a mode of travel which can change direction at any time.) A random flight has the nice property that, because the exponential is memory-less, it is a time-homogeneous process (assuming the choice of geodesic is time-homogeneous, of course).

More concretely, for $\hh>0$, we have a family of probability measures $\mu^{\hh}_q$ on the cotangent spaces $T^*_qM$. Then $X^{\hh}_{\hh(i+1)} = \exp_{X^{\hh}_{\hh i}}(h,\lambda/h)$ where $\lambda$ is a covector chosen according to $\mu^{\hh}_{X^{\hh}_{\hh i}}$ independently of the previous steps, and the path travels from $X^{\hh}_{\hh i}$ to $X^{\hh}_{\hh(i+1)}$ along the geodesic determined by $\lambda$, at constant speed given by $\|\lambda\|/h$ (and for a distance given by $\|\lambda\|$). That is, for $t\in [\hh i,\hh(i+1)]$, we have $X^{\hh}_t = \exp_{X^{\hh}_{\hh i}}\lp (t-\hh i),\lambda/\hh \rp$.

Note that, unlike earlier in the paper, here we index our random walks by the size of the time-step, instead of by the size of the spacial-step. This is because we no longer require that $\mu^{\hh}_q$ be supported on covectors of fixed length. Indeed, earlier the paper, we have considered the case when $\mu_q$ is a probability measure on covectors of length 1, and $\mu^{\hh}_q$ is given by parabolic scaling of $\mu_q$; that is, if $f_{\hh}$ is the fiber-preserving map that takes a covector $\lambda$ to $\sqrt{2\hh  k} \lambda$, then $\mu^{\hh}_q$ is the pushforward of $\mu_q$ by $f_{\hh}$. (Also compare the present set-up with Definition \ref{Def:MainOperator}.) However, here we work in the generality of random walks as just described.

We are interested in the convergence of a sequence of random walks $X^{\hh}_t$ (on $M$) as $\hh\rightarrow0$ to a (continuous time) diffusion. A key role is played by the following associated operator (which we have already seen). For any $\phi\in C_0^{\infty}(M)$, let
\[
L_{\hh} \phi(q) = \frac{1}{\hh}\lp\E\lb \left . \phi\lp X^{\hh}_{\hh}\rp \right| X^{\hh}_0=q\rb -\phi(q)\rp = 
  \frac{1}{\hh}\lp\int_{T^*_qM} \phi\lp \exp_q(h,\lambda/h)\rp  \mu^{\hh}_q(\lambda)-\phi(q) \rp .
\]
The idea is that, under suitable assumptions, the random walks will converge to the diffusion generated by $\lim_{\hh\rightarrow0}L_{\hh}$ as $\hh\rightarrow0$. That the behavior in the limit is governed only by the second-order operator, and not any other features of the random walks, should be seen as a version of Donsker invariance.

\subsection{Background on diffusions and pathspace}

Next, we recall some basic facts about the diffusions which will arise as limits of our random walks. In what follows, we assume that $L$ is a smooth second-order operator with non-negative-definite principal symbol and without zeroth-order term. This means that, in local coordinates $x_1,\ldots,x_n$, $L$ can be written as
\[
\sum_{i,j=1}^n a_{ij}\partial_{x_i}\partial_{x_j} + \sum_{i=1}^n b_i \partial_{x_i},
\]
where the $a_{ij}$ and $b_i$ are smooth functions, and the matrix $\lb a_{ij}\rb$ is symmetric and non-negative definite. Alternatively, $L$ can be locally written as
\[
Y_0 + \sum_{i=1}^n Y_i^2
\]
for smooth vector fields $Y_0,Y_1,\ldots, Y_n$. Recall that, in this case, there exists a unique diffusion generated by $L$, although this diffusion may explode in finite time (that is, with positive probability, a path might exit every compact subset of $M$ in finite time). For simplicity, we assume that the diffusion generated by $L$ does not explode (indeed, our analysis is fundamentally local). Let $P_q$ be the measure on $\Omega_M$ (the space of continuous paths from $[0,\infty)$ to $M$) corresponding to this diffusion (starting from $q$). Then $P_q$ is uniquely characterized by the fact that
\[
\phi(\omega_t)-\int_0^t L \phi\lp \omega_s\rp \, ds -\phi(q) \quad\text{is a $P_q$-martingale with $P_q\lp \omega_0=q\rp=1$}.
\]

A flexible and powerful approach to the convergence of random walks, based on martingale theory, was essentially already provided by Stroock and Varadhan in \cite{SAndV} (specifically, in Section 11.2). However, they develop their approach in the setting of Euclidean space. To generalize to the Riemannian or sub-Riemannian setting is relatively straight-forward. The main things one needs to do are to replace the systematic use of Euclidean coordinates by the use of coordinate-free or local coordinate-based expressions and to replace the linear interpolation from $X_{i\hh}$ and $X_{(i+1)\hh}$ via the appropriate geodesic segment. In regard to the local versus global nature of the problem, in principle we could even allow for the limiting diffusion to explode in finite time, as just described. However, this requires a more substantial reworking of the underlying framework ($M$ should be compactified by adding a point at infinity-- the Alexandrov one-point compactification, the space of continuous paths on $M$ should be suitably modified, etc.), which would be a substantial digression from the main thrust of this work. Indeed, we are most interested in understanding various choices of sub-Laplacian on a sub-Riemannian manifold, and this is really a local problem. Thus, there is no harm in restricting our attention to the situation when the underlying diffusion on $M$ does not explode.

Following \cite[Section 1.3]{SAndV}, for $\omega\in\Omega_M$, let $\omega_t$ be the position of $\omega$ at time $t$. We define a metric on $\Omega_M$ by
\begin{equation}\label{Eqn:PathspaceDist}
d_{\Omega_M}(\omega,\tilde{\omega}) = \sum_{i=1}^{\infty} \frac{1}{2^i}\frac{\sup_{0\leq t\leq i}d_M(\omega_t,\tilde{\omega}_t)}{1+ \sup_{0\leq t\leq i}d_M(\omega_t,\tilde{\omega}_t)} .
\end{equation}
This metric makes $\Omega_M$ into a Polish space, and the notion of convergence that it induces is that of uniform convergence on bounded time intervals. We equip $\Omega_M$ with its Borel $\sigma$-algebra $\sM$ and give it the natural filtration $\sM_t$ generated by $\{\omega_s:0\leq s\leq t\}$. For probability measures on $\Omega_M$, we will always work with respect to the weak topology (in probabilists' terminology; this is the weak* topology in the language of functional analysis, corresponding to convergence against bounded, continuous test functions). We will generally be interested in a (Markov) family of probability measures, indexed by points of $M$. (Indeed, we have already seen that our random walks correspond to such families.) In that case, we will write them as $P_q$ for $q\in M$.

The case of $M=\bR^n$ (with the standard Euclidean metric) is the one considered by Stroock and Varadhan. Rather than generalize their approach directly (which would perhaps be more mathematically satisfying, but would also take longer), we exploit a standard trick to make the general case a special case of the Euclidean case. In particular, first note that the sub-Riemannian metric on $M$ can be extended to a Riemannian metric (not necessarily in any canonical way, but that doesn't matter here). Then the Nash embedding theorem implies that $M$ can be isometrically embedded into some Euclidean space of high enough dimension, say $\bR^N$ (and this embedding is smooth, although not necessarily proper if $M$ is not compact). 

\subsection{Control of the interpolating paths}

Assuming that the operators $L_{\hh}$ converge to an operator $L$ of the type just discussed, in the sense that, for any $\phi\in C_0^{\infty}(M)$, we have $L_{\hh} \phi\rightarrow L\phi$ uniformly on compacts as $\hh\rightarrow0$, is almost the only condition that we need. (Indeed, for $M=\bR^N$ this is enough.) However, the convergence of the $L_{\hh}$ depends only on the positions of the random walks at the sequences of times $0,\hh, 2\hh,\ldots$, and does not necessarily control the path traveled between these times. To give a simple example of what can go wrong, consider $M=\bS^1$ with the standard metric. If $X^{\hh}_0=q$, let the walk travel a full circle either clockwise or counter-clockwise, each with probability $1/2$, at constant speed in time $\hh$. In other words, if $\theta$ is the standard coordinate, we let $\mu^{\hh}_q = \pm 2\pi d\theta$ where the sign has probability $1/2$ of being positive or negative. Then $L^{\hh}$ is the zero operator for all $\hh$ (since $q=X^{\hh}_0=X^{\hh}_{\hh}=X^{\hh}_{2\hh}=\cdots$), but it's clear from Eq.~\eqref{Eqn:PathspaceDist} that the walks are not converging. This is in contrast to Euclidean space, where the entire path $X^{\hh}_t$ for $t\in[i\hh,(i+1)\hh]$ is determined by $X^{\hh}_{i\hh}$ and $X^{\hh}_{(i+1)\hh}$. (Indeed, in \cite{SAndV}, each step of the random walk is determined by choosing a point in $\bR^N$ according to, in slightly modified notation, a probability measure $\Pi^h_q$ on $\bR^N$. Then the path of the particle during this step is given by linear interpolation. The appropriate geometric generalization of this is our measure $\mu_q^h$ on the co-tangent space, which we can think of as giving the next step in the walk along with a path for the particle to travel to get there.)

Thus, we will also need to assume that for any $\rho>0$, any compact $Q\subset M$, and any $\alpha>0$, there exists $\hh_0>0$ such that
\begin{equation}\label{Eqn:PathRegularity}
\frac{1}{\hh}P^{\hh}_{q}\lb \sup_{0\leq s\leq\hh} d_M\lp X^{\hh}_{0} , X^{\hh}_{s}\rp \leq \rho\rb > 1-\alpha
\end{equation}
whenever $q\in Q$ and $\hh<\hh_0$. This assumption is not especially restrictive. First of all, on Euclidean space, Lemmas 11.2.1 and 11.2.2 of \cite{SAndV} show that if $L_{\hh} \phi\rightarrow L\phi$ uniformly on compacts as $\hh\rightarrow0$ for any $\phi\in C_0^{\infty}(\bR^N)$, then, under some global uniformity assumptions on the $\mu^{\hh}_q$, for any $\rho>0$ we have
\[
\lim_{\hh\rightarrow0} \sup_{\substack{q\in \bR^N \\ i\in\{0,1,2,\ldots\}}} \frac{1}{\hh}P^{\hh}_{q}\lb d_{\bR^N}\lp X^{\hh}_{i\hh},X^{\hh}_{(i+1)\hh}\rp>\rho\rb =0 ,
\]
and thus for any $0<T<\infty$,
\[
\lim_{\hh\rightarrow0} \sup_{q\in\bR^N} P^{\hh}_{q}\lb \sup_{\substack{i:0\leq \hh i\leq T \\ 0\leq s\leq \hh}} d_{\bR^N}\lp X^{\hh}_{i\hh} , X^{\hh}_{i\hh+s}\rp \leq \rho\rb =1 .
\]
Thus, we are essentially assuming that the behavior of $X^{\hh}_t$ for $t\in(i\hh,(i+1)\hh)$ is comparable to the behavior of the step $X^{\hh}_{(i+1)i\hh}|X^{\hh}_{i\hh}$, which as already mentioned, is automatic in the Euclidean case but not necessarily in other situations.

Further, we note that the condition of Eq.~\eqref{Eqn:PathRegularity} follows from assuming that $\frac{1}{\hh}\mu^{\hh}_q\lb \|\lambda\| \leq \rho\rb$ approaches 1 as $\hh\rightarrow 0$ uniformly for $q$ in any compact set (here $\lambda$ is a covector in $T_q^*M$). In particular, for random walks of the type we consider earlier in the paper, this condition is trivially satisfied by taking $\hh < \rho^2/2k$, because by construction a step of $X^{\hh}_t$ consists in traveling a geodesic for distance $\sqrt{2  \hh  k}$; that is, $\mu^{\hh}_q$ is supported on covectors of length $\sqrt{2\hh k}$ for all $q$ (again, see Definition \ref{Def:MainOperator}).

\subsection{Convergence results}

We can now prove the following general convergence theorem. Note that the argument is essentially soft, since all of the serious estimates are implicitly already taken care of in \cite{SAndV}.


\begin{theorem}\label{AppendixConvergence}
Let $M$ be a (sub)-Riemannian manifold with a smooth second-order operator $L$ with non-negative-definite principal symbol and without zeroeth-order term. Further, suppose that the diffusion generated by $L$, which we call $X^0$, does not explode, and let $P_q$ be the corresponding probability measure on $\Omega_M$ starting from $q$. Similarly, let $P^{\hh}_q$ be the probability measures on $\Omega_M$ corresponding to a sequence of random walks $X^{\hh}_{t}$ as above with $X^{\hh}_0=q$, and let $L_{\hh}$ be the associated operators. Suppose that, for any $\phi\in C_0^{\infty}(M)$, we have that
\begin{equation}\label{Eqn:OperatorConv}
L_{\hh} \phi\rightarrow L\phi \quad\text{uniformly on compacts as $\hh\rightarrow0$,}
\end{equation}
and also suppose that the condition of Eq.~\eqref{Eqn:PathRegularity} holds for the $X^{\hh}_{t}$.
Then if $q_{\hh}\rightarrow q$ as $\hh\rightarrow0$, we have that $P^{\hh}_{q_{\hh}}\rightarrow P_q$ as $\hh\rightarrow0$.
\end{theorem}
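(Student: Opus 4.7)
The plan is to identify the limit through the martingale problem for $L$, proceeding in three conceptual steps: tightness of $\{P^h_{q_h}\}$ in $\Omega_M$, identification of any subsequential weak limit as a solution of the martingale problem for $L$ starting at $q$, and uniqueness of that martingale problem (which is standard given smoothness of the coefficients of $L$ together with the assumed non-explosion of $X^0$). One can either carry the argument out intrinsically on $M$, mirroring \cite[Section 11.2]{SAndV} but with geodesic-segment interpolation in place of linear interpolation, or push everything into $\mathbb{R}^N$ via a Nash embedding $\iota:M\hookrightarrow\mathbb{R}^N$ (after extending $\metr$ to a Riemannian metric using some complement to $\distr$) and then adapt the Euclidean lemmas of \cite{SAndV}. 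I would take the intrinsic approach, since the embedding is only a topological convenience and the hard part of the proof lies elsewhere.

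For the tightness step I would combine (i) a Lyapunov argument showing that $X^h$ stays inside an arbitrarily large compact subset with probability tending to $1$ uniformly in $h$, which follows from \eqref{Eqn:OperatorConv} applied to a cutoff of any proper Lyapunov function for $L$ together with the non-explosion of $X^0$, and (ii) a modulus-of-continuity estimate. Estimate (ii) would first be established for the discrete skeleton $\{X^h_{ih}\}$ via the discrete-time martingale $\phi(X^h_{nh})-h\sum_{i<n}L_h\phi(X^h_{ih})$ and standard Doob/maximal inequalities, and then extended to the full interpolated process by hypothesis \eqref{Eqn:PathRegularity}. For the identification step, I would take any weak subsequential limit $P^\ast$ and fix $\phi\in C_0^\infty(M)$; using \eqref{Eqn:OperatorConv} uniformly on the compact set containing the paths with high probability, together with \eqref{Eqn:PathRegularity} to bridge the skeleton and its interpolation, one shows that the continuous-time analogue of the discrete martingale converges $P^h_{q_h}$-in distribution to $\phi(\omega_t)-\int_0^t L\phi(\omega_s)\,ds$ and retains the martingale property in the limit, so $P^\ast$ solves the martingale problem for $L$ from $q$. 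Uniqueness then identifies $P^\ast=P_q$ and promotes subsequential to full convergence.

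The main obstacle will be precisely the transition from the discrete skeleton to the continuously interpolated path. In \cite{SAndV} this is essentially automatic because linear interpolation in $\mathbb{R}^N$ is determined by its endpoints, so the oscillation of a path over a single step equals the distance between its endpoints; on a (sub)-Riemannian manifold, as the great-circle random walk on $\mathbb{S}^1$ in the text illustrates, a geodesic segment can be arbitrarily long while its endpoints coincide, and this is exactly what \eqref{Eqn:PathRegularity} is introduced to rule out. Concretely, the required uniform estimate is that $\sup_{0\le s \le h} d_M(X^h_{\lfloor t/h\rfloor h},X^h_s)$ tends to $0$ in probability uniformly for $t$ in compact time intervals and starting points in compact sets, which is what allows the tightness of the interpolated process to be reduced to that of the skeleton and which allows the discrete martingale to be approximated by its continuous-time version in the identification argument. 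Once that uniform step-control is extracted from \eqref{Eqn:PathRegularity}, the rest of the proof is a soft transcription of \cite{SAndV}.
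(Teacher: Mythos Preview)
Your outline is essentially correct, but the route you choose is \emph{not} the one the paper takes, and one of your steps is sketchier than you make it sound.

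\medskip
\textbf{Comparison with the paper.} The paper does \emph{not} carry out the intrinsic martingale-problem argument you describe. Instead it takes exactly the route you reject: fix a Riemannian extension, Nash-embed $M$ isometrically into some $\mathbb{R}^N$, extend $L$ and the random walks to operators and walks on all of $\mathbb{R}^N$ with bounded coefficients, and then \emph{quote Theorem~11.2.3 of \cite{SAndV} as a black box} to get convergence of the Euclidean-interpolated walks $\tilde X^h$ to the diffusion. The only remaining work is to show that replacing the Euclidean chords $\tilde X^h$ by the original $M$-geodesic segments $X^h$ does not change the limit; this is where \eqref{Eqn:PathRegularity} is used, together with a distance comparison $d_{\mathbb{R}^N}\le d_M\le u(d_{\mathbb{R}^N})$ coming from the ball-box estimate. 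The non-compact case is then handled by exhaustion via Lemma~11.1.1 of \cite{SAndV} (stopping at the exit times of an increasing sequence of compacts), rather than by a global Lyapunov argument. The paper itself says this embedding trick is chosen over the direct generalization because it is shorter, not because the direct route fails; so your approach is legitimate, but it is more work, and the paper deliberately avoids it.

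\medskip
\textbf{A point that needs more care in your outline.} Your tightness step~(i) asserts that uniform-in-$h$ compact containment ``follows from \eqref{Eqn:OperatorConv} applied to a cutoff of any proper Lyapunov function for $L$ together with the non-explosion of $X^0$.'' Two issues: first, non-explosion of $X^0$ does not hand you a proper smooth Lyapunov function $V$ with $LV\le C V$ for free; the existence of such a $V$ is essentially equivalent to non-explosion but requires a separate argument. Second, and more seriously, the hypothesis \eqref{Eqn:OperatorConv} is only for $\phi\in C_0^\infty(M)$, so it says nothing about $L_h V$ for a proper (hence non-compactly-supported) $V$, and once you cut $V$ off you lose control of $L_h$ applied to the cutoff precisely in the region where you need it. This can be repaired by a localization/stopping argument, but at that point you are essentially reinventing the exhaustion scheme of \cite[Lemma~11.1.1]{SAndV} that the paper invokes directly. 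In short: your step~(i) is not wrong, but it is not as automatic as you suggest, and the clean way to do it is the paper's way.
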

\begin{proof} 
First, suppose that $M$ is compact. Then $M$ can be realized as a compact (isometrically embedded) submanifold of $\bR^N$ via Nash embedding. This makes $\Omega_M$ a closed subset of $\Omega_{\bR^N}$. Further, we claim that there is an increasing, continuous function $u:[0,\infty)\rightarrow[0,\infty)$ with $u(0)=0$ such that, for $q,\tilde{q}\in M$,
\begin{equation}\label{Eqn:DistanceComp}
d_{\bR^N}\lp q,\tilde{q}\rp \leq  d_{M}\lp q,\tilde{q}\rp \leq u\lp d_{\bR^N}\lp q,\tilde{q}\rp \rp .
\end{equation}
In particular, the topology on $\Omega_M$ induced as a subset of $\Omega_{\bR^N}$ agrees with the original topology on $\Omega_M$. To see that this is true, first observe that the inequality $d_{\bR^N}\lp q,\tilde{q}\rp \leq  d_{M}\lp q,\tilde{q}\rp$ is immediate from the isometric embedding and the optimality of Euclidean geodesics in $\bR^N$. Next, we let $\overline{d}_M$ be the Riemannian distance on $M$ (induced by the Riemannian extension metric of the sub-Riemannian metric). Then \cite[Theorem 1.2]{Jea-2014} implies that  every point of $M$ has a neighborhood on which we can find a function $u$ as above for which $d_{M}\lp q,\tilde{q}\rp \leq u\lp \overline{d}_M\lp q,\tilde{q}\rp \rp$. One only needs to check that the constants in \cite[Theorem 1.2]{Jea-2014} can be chosen uniformly, and this follows from the fact that, in \cite[Lemma 1.1]{Jea-2014}, the commutators $X_{I_i}$ can be used on the entire neighborhood (assuming it's small enough) and then the function $\varphi(t_1,\ldots,t_n)$ depends continuously on the base point. Since $M$ is compact, this $u$ can be chosen for all of $M$. Further, since the Riemannian distance $\overline{d}_M$ and the Euclidean distance $d_{\bR^N}$ are Lipschitz-comparable under the embedding, potentially multiplying $u$ by a positive constant allows us to replace  $\overline{d}_M$ by $d_{\bR^N}$. This establishes Eq.~\eqref{Eqn:DistanceComp} (and in fact, $u$ can be taken to be H\"older-continuous, though we don't need that here).

Next, there is no problem extending $L$ to a smooth operator on all of $\bR^N$ with bounded coefficients (where we mean the coefficients used when writing $L$ with respect to the Euclidean coordinates). We also extend the family of random walks $X^{\hh}_t$ to a family of Euclidean random walks $\tilde{X}^{\hh}$ so that we have a random walk starting from any point of $\bR^N$ in such a way that the convergence of Eq.~\eqref{Eqn:OperatorConv} and the condition of Eq.~\eqref{Eqn:PathRegularity} still hold (with  $X^{\hh}_t$ replaced by $\tilde{X}^{\hh}_t$). Here $\tilde{X}^{\hh}_t$ is not exactly an extension, since we interpolate between $\tilde{X}^{\hh}_{i\hh}$ and $\tilde{X}^{\hh}_{(i+1)\hh}$ by a Euclidean geodesic. However, we still have that, if $X^{\hh}_0=\tilde{X}^{\hh}_0\in M$, then $X^{\hh}_{i\hh}=\tilde{X}^{\hh}_{i\hh}$ for all $i$, so that the underlying Markov chains on $M$ agree. Further, it's easy to see that these extensions can be performed in such a way that the assumptions of Theorem 11.2.3 of \cite{SAndV} are satisfied (indeed, this just means some extra global assumptions on the random walks and operators, but since we start from a compact set, we can make $L$ and $\tilde{X}^{\hh}_t$ be anything we'd like outside of some large ball, just by using a bump function).

We let $\tilde{P}^{\hh}$ be the measures on $\Omega_{\bR^N}$ corresponding to the $\tilde{X}^{\hh}$. If we now apply Theorem 11.2.3 of \cite{SAndV} to the situation at hand, see that $\tilde{P}^{\hh}_{q_{\hh}}\rightarrow P_q$ as $\hh\rightarrow0$ (as probability measures on $\Omega_{\bR^N}$). It remains only to see that the same holds for $P^{\hh}_{q_{\hh}}$, that is, that whether or not we go from $X_{i\hh}$ to $X_{(i+1)\hh}$ via a Euclidean geodesic or via the original $M$-geodesic doesn't matter in the limit.

We now pass to any discrete subsequence $\hh_j$ such that $\hh_j\rightarrow 0$ as $j\rightarrow \infty$. However, for simplicity, we continue just to write $\hh$. By Theorem 1.3.1 of \cite{SAndV} and the fact that $\{\tilde{P}^{\hh}_{q_{\hh}}\}$ is precompact (indeed, it converges as $\hh\rightarrow 0$), we see that, for every $\rho>0$ and $0<T<\infty$,
\[
\lim_{\delta\searrow 0} \inf_{\hh} \tilde{P}^{\hh}_{q_{\hh}}\lb \sup_{\substack{0\leq s\leq t\leq T \\ t-s\leq\delta}}  d_{\bR^N}\lp \tilde{X}^{\hh}_t,\tilde{X}^{\hh}_s\rp\leq \rho \rb =1 .
\]
(Here the $\hh$ on both the measure $P^{\hh}$ and the corresponding random path $\tilde{X}^{\hh}$ is somewhat redundant, but clear nonetheless. In a moment we will make better use of this notation.) 

By assumption, the condition of Eq.~\eqref{Eqn:PathRegularity} holds. Since $M$ is compact and $X^{\hh}_{i\hh}$ is a Markov chain, simply summing the probability of the distance exceeding $\rho$ at each step of the walk gives that for any $0<T<\infty$, any $\rho>0$, and any $\alpha>0$, there exists $\hh_0>0$ such that
\begin{equation}\label{Eqn:PathRegularity2}
\sup_{q\in M}P^{\hh}_{q}\lb \sup_{\substack{i : 0\leq \hh i\leq T \\ s<\hh}} d_M\lp X^{\hh}_{i\hh} , X^{\hh}_{i\hh+s}\rp > \rho\rb \leq \frac{T+1}{\hh} \sup_{q\in M} P^{\hh}_{q}\lb \sup_{0\leq s\leq\hh} d_M\lp X^{\hh}_{0} , X^{\hh}_{s}\rp > \rho\rb \leq (T+1)\alpha
\end{equation}
whenever $\hh<\hh_0$. By Eq.~\eqref{Eqn:DistanceComp}, this also holds if we replace $d_M$ with $d_{\bR^N}$, perhaps by taking $\hh$ smaller. Thus, for any $0<T<\infty$, any $\rho>0$, any $\alpha>0$ and small enough $\hh$, we have that
\[
P^{\hh}_{q_{\hh}}\lb \sup_{\substack{i : 0\leq \hh i\leq T \\ s<\hh}} d_{\bR^N}\lp X^{\hh}_{i\hh} , X^{\hh}_{i\hh+s}\rp \leq \rho\rb > 1-\alpha.
\]

In order to compare $\tilde{P}^{\hh}_{q_{\hh}}$ and $P^{\hh}_{q_{\hh}}$, it is convenient to realize them as push-forwards of a single measure on some probability space. In some sense, this is how we have described these random walks as being generated, in terms of a sequence of independent random variables which we use to draw the new cotangent vector at every step. However, it is more direct just to note that the paths $\tilde{X}_t$ can be recovered from the paths $X_t$. More concretely, for any path $\omega$ on $M$ that is piecewise geodesic with respect to the times $0,\hh,2\hh,\ldots$, let $F^{\hh}(\omega)$ be the path in $\bR^N$ that interpolates between $\omega_{i\hh}$ and $\omega_{(i+1)\hh}$ by the appropriate Euclidean geodesic for each $i$. Then $F^{\hh}$ is defined on the support of $P^{\hh}_{q_{\hh}}$, and $\tilde{P}^{\hh}_{q_{\hh}}$ is the pushforward of $P^{\hh}_{q_{\hh}}$ under $F^{\hh}$ for each $\hh$. So it is natural to view both $X^{\hh}$ and $\tilde{X}^{\hh}$ as random variables under $P^{\hh}_{q_{\hh}}$, with $\tilde{X}^{\hh}= F^{\hh}\lp X^{\hh}\rp$.

It now follows from the above (recall in particular that  $\tilde{P}^{\hh}_{q_{\hh}}$ and $P^{\hh}_{q_{\hh}}$ have the same marginals on the sequence of times $0, \hh, 2\hh,\ldots$) that, given $0<T<\infty$, $\rho>0$, and $\alpha>0$, for all sufficiently small $\hh$ we have
\[
P^{\hh}_{q_{\hh}}\lb \sup_{0\leq t \leq T} d_{\bR^N}\lp \tilde{X}^{\hh}_{t} , X^{\hh}_{t}\rp \leq \rho\rb > 1-\alpha .
\]
In light of Eq.~\eqref{Eqn:PathspaceDist}, we conclude that, for any $i\in\{1,2,\ldots\}$, $\rho>0$, any $\alpha>0$, and sufficiently large $\hh$ we have:
\[
P^{\hh}_{q_{\hh}}\lb d_{\bR^N}\lp \tilde{X}^{\hh}, X^{\hh}\rp \leq \rho + \frac{1}{2^i} \rb > 1-\alpha.
\]

Next, let $\Phi$ be a bounded, uniformly continuous function on $\Omega_{\bR^N}$, and choose any $\delta>0$. Then there exists $\eta>0$ such that $\lab\Phi(\omega)-\Phi(\tilde{\omega})\rab<\delta$ whenever $d_{\bR^N}(\omega,\tilde{\omega})<\eta$. Now choose $i$ large enough so that $\sum_{j=i}^{\infty}1/2^j < \eta/2$ and let $\rho<\eta/2$ and $\alpha<\delta$ in the above. Then there exists $\hh_0>0$ such that
\[
P^{\hh}_{q_{\hh}}\lb d_{\bR^N}\lp \tilde{X}^{\hh}, X^{\hh}\rp > \eta \rb < \delta ,
\]
and thus
\[
\E^{P^{\hh}_{q_{\hh}}}\lb \lab \Phi\lp \tilde{X}^{\hh}\rp-\Phi\lp X^{\hh}\rp\rab \rb  <\delta + 2\delta \|\Phi\|_{\infty}
\]
for $\hh<\hh_0$ (here $ \|\Phi\|_{\infty}$ is the $L^{\infty}$-norm of $\Phi$). Further, using $\tilde{P}^{\hh}_{q_{\hh}}\rightarrow P_q$, after perhaps decreasing $\hh_0$, we have
\[
\lab \E^{P^{\hh}_{q_{\hh}}}\lb \Phi\lp \tilde{X}^{\hh}\rp\rb - \E^{P_{q}}\lb \Phi\lp X^{0}\rp\rb\rab<\delta
\]
for $\hh<\hh_0$. Then linearity of expectation and the triangle inequality imply that
\[
\lab \E^{P^{\hh}_{q_{\hh}}}\lb \Phi\lp X^{\hh}\rp\rb - \E^{P_{q}}\lb \Phi\lp X^{0}\rp\rb\rab<2\delta\lp 1+ \|\Phi\|_{\infty}\rp
\]
for $\hh<\hh_0$, and thus, since $\delta>0$ is arbitrary
\[
\lim_{\hh\rightarrow 0} \E^{P^{\hh}_{q_{\hh}}}\lb \Phi\lp X^{\hh}\rp\rb = \E^{P_{q}}\lb \Phi\lp X^{0}\rp\rb .
\]
This convergence holds for any bounded, uniformly continuous function $\Phi$ on $\Omega_{\bR^N}$, and this is sufficient (by the portemanteau theorem), to show that $P^{\hh}_{q_{\hh}}\rightarrow P_q$.  Here there is some ambiguity as to whether or not we think of $P^{\hh}_{q_{\hh}}$ and $P_q$ as probability measures on $\Omega_M$ or $\Omega_{\bR^N}$, but the point is that it doesn't matter. As measures on $\Omega_{\bR^N}$, they are nonetheless supported on $\Omega_M$. Further, weak convergence over $\Omega_{\bR^N}$ implies weak convergence over $\Omega_M$, and thus the convergence pulls back to $\Omega_M$. Since this convergence holds for any discrete sequence $\hh_j\rightarrow 0$, it holds as $\hh\rightarrow 0$ via all positive reals, and this completes the proof in the case when $M$ is compact.

If $M$ is not compact, we proceed by exhaustion, using Lemma 11.1.1 of \cite{SAndV}. In this context, note that this lemma is stated for the case when $\Omega$ is $\Omega_{\bR^N}$, but it is proved by a general and elementary method, and thus the statement and proof apply, as written, to the case when $\Omega=\Omega_M$.

Let $A_k\subset M$ be an exhaustion of $M$ by compact subsets (with smooth boundary, possible by Sard's theorem). We assume that $q$ and all of the $q_{\hh}$ are contained in $A_1$. If $\tau_k$ is the first hitting time of the complement of the interior of $A_k$, then $\tau_k$ is a non-decreasing sequence of lower semi-continuous stopping times that increases to $\infty$ for each $\omega\in\Omega_M$. Now let $M_k$ be a sequence of compact Riemannian manifolds into which $A_k$ can be isometrically included. (In other words, we truncate, in a geometrically reasonable way, $M$ outside of $A_k$ to get $M_k$.) For each $k$, extend $L$ to a smooth operator $L_k$ on $M_k$, and extend the random walk to $M_k$ in such a way that the probability measures $P^{\hh, k}_{q_{\hh}}$ corresponding to the random walks on $M_k$ converge to the the probability measure corresponding to the diffusion generated by $L_k$, which we denote $Q^k_{q}$. The previous argument for compact $M$ makes it clear that this is possible.

By construction, the probability measure $P^{\hh, k}_{q_{\hh}}$ agrees with $P^{\hh}_{q_\hh}$ on $\sM_{\tau_k}$. Also, by the previous argument, for any $k\geq 1$ and any discrete sequence $\hh_j\rightarrow 0$, the family $\lc P^{\hh_j, k}_{q_{{\hh_j}}} :j\geq 1 \rc$ converges to $Q^k_q$, and $Q^k_q$ equals $P_q$ on $\sM_{\tau_k}$. These are exactly the assumptions of Lemma 11.1.1 of \cite{SAndV}, and so we conclude that $P^{\hh_j}_{q_{\hh_j}}\rightarrow P_q$ as $j\rightarrow \infty$. As before, since this holds for any discrete sequence $\hh_j\rightarrow 0$ we have that $P^{\hh}_{q_\hh}\rightarrow P_q$ as $\hh\rightarrow 0$.
\end{proof}

\begin{rmk}
Note that nowhere in the proof did we use that the path from $X^{\hh}_{\hh i}$ to $X^{\hh}_{\hh(i+1)}$ was a geodesic. Indeed, all that matters is that the path is continuous and satisfies the condition of Eq.~\eqref{Eqn:PathRegularity} (and goes from $X^{\hh}_{\hh i}$ to $X^{\hh}_{\hh(i+1)}$, of course). Thus, Theorem \ref{AppendixConvergence} holds in slightly more generality, where we extend the class of random walks under consideration to include random walks where the interpolations from $X^{\hh}_{\hh i}$ to $X^{\hh}_{\hh(i+1)}$ are not necessarily geodesics. Perhaps the most natural such situation would be where each step of the random walk is given by flowing along the integral curve of some horizontal vector field for a small time (as opposed to traveling along a geodesic for a small time), as considered in \cite{OurVolume}.
\end{rmk}

Finally, we return to the special case of the type of random walks discussed earlier in the paper. Then we can show that the convergence of a single step of the random walk, used to (first) define the microscopic Laplacian with respect to a splitting, also implies the convergence of the random walk to the natural limiting diffusion.

\begin{theorem}\label{AppendixConvergence2}
For a (sub)-Riemannian manifold $M$, consider a splitting $TM =\distr \oplus \V$ (if $M$ is Riemannian, the splitting is necessarily trivial) and some choice of adapted measure $\{\mu_q\}_{q \in M}$, and let $L^\V$ be the associated microscopic Laplacian, as in Definition \ref{Def:MainOperator}. Then there is a unique diffusion $X^{0,q}_{t}$ generated by $L^\V$ starting from any $q$. Let $X^{\hh,q}_{t}$ be the random walk, starting from $q$, determined by the parabolic scaling of the family $\{\mu_q\}$, as discussed above.  Assume that $X^{0,q}_{t}$ does not explode, for any $q$. If $q_{\hh}\rightarrow q_0$ as $\hh\rightarrow 0$, then the random walks $X^{\hh,q_{\hh}}_t$ converge to the diffusion $X^{0,q_0}_t$ as $\hh\rightarrow 0$, in the sense that the corresponding probability measures on $\Omega_M$ converge weakly.
\end{theorem}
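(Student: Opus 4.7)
The plan is to deduce Theorem~\ref{AppendixConvergence2} as a direct application of Theorem~\ref{AppendixConvergence}. To do this, I need to (a) identify the parabolically-scaled random walk in the general framework of that theorem, (b) verify the operator convergence hypothesis~\eqref{Eqn:OperatorConv}, (c) verify the path-regularity hypothesis~\eqref{Eqn:PathRegularity}, and (d) invoke existence of the limiting diffusion.

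First I would unwind the parabolic scaling. A single step of $X^{\hh,q}_t$ chooses $\lambda \in \cyl_{q}$ according to $\mu_q$, and then travels the arc-length parametrized geodesic $\exp_q(s,\lambda)$ for $s\in[0,\sqrt{2k\hh}]$ at constant speed $\sqrt{2k/\hh}$; by the rescaling identity $\gamma_{\alpha\lambda}(t) = \gamma_\lambda(\alpha t)$ this is the same as $\exp_q(\hh,\lambda/\hh)$ for the rescaled covector in $T^*_q M$. The associated single-step operator $L_{\hh}$ of Theorem~\ref{AppendixConvergence} is then
\begin{equation}
(L_\hh \phi)(q) = \frac{1}{\hh}\Bigl(\int_{\cyl_q}\phi\bigl(\exp_q(\sqrt{2k\hh},\lambda)\bigr)\,\mu_q(\lambda) - \phi(q)\Bigr),
\end{equation}
and setting $t = \sqrt{2k\hh}$ gives $1/\hh = 2k/t^2$, so $L_\hh \phi \to L^\V \phi$ as $\hh \to 0$ is precisely the convergence statement of Definition~\ref{Def:MainOperator}. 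The uniformity on compact sets in $q$ is given by Theorem~\ref{t:microformula}, which explicitly asserts the convergence is uniform on compacts (with the control coming from the uniformity of the Taylor expansion in Lemma~\ref{l:taylor} and the $2$-decreasing hypothesis on $\mu_{\mathbf{v}_q}$). This verifies~\eqref{Eqn:OperatorConv}.

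Next I would check~\eqref{Eqn:PathRegularity}. This is the step where path-regularity in sub-Riemannian setting could in principle fail, but here it is essentially trivial: regardless of the covector drawn from $\cyl_q$, the corresponding normal geodesic has unit sub-Riemannian speed, so during the single step of duration $\hh$ the walk travels arc-length $\sqrt{2k\hh}$ at constant speed. Hence the displacement satisfies, deterministically,
\begin{equation}
\sup_{0\leq s\leq \hh} d_M\bigl(X^{\hh}_0,X^{\hh}_s\bigr)\;\leq\; \sqrt{2k\hh},
\end{equation}
since the sub-Riemannian length upper-bounds the Carnot–Carath\'eodory distance. For any $\rho>0$, picking $\hh_0 = \rho^2/(2k)$ gives the indicator event probability $1$ for $\hh<\hh_0$, uniformly in $q$, which trivially yields~\eqref{Eqn:PathRegularity}. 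This also implicitly uses completeness (so that geodesics are well-defined for time $\sqrt{2k\hh}$), provided by Lemma~\ref{l:prolong} locally and the standing completeness of $M$ globally.

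Finally, the limiting operator $L^\V$ is a smooth second-order operator with non-negative-definite principal symbol (equal to $2H$, by Theorem~\ref{t:microformula}) and no zeroth-order term, hence by standard hypoelliptic-diffusion theory it generates a diffusion $X^{0,q}_t$ on $M$, unique in law from each starting point; by assumption this diffusion does not explode. With hypotheses (a)–(c) of Theorem~\ref{AppendixConvergence} verified and $q_\hh \to q_0$, that theorem yields the weak convergence $P^{\hh}_{q_\hh}\to P_{q_0}$ on $\Omega_M$, which is exactly the claim. The only delicate point in this plan is the uniformity in~\eqref{Eqn:OperatorConv}, but that is already packaged into Theorem~\ref{t:microformula}, so there is no serious obstacle; the proof is essentially a verification that the specific geodesic random walks satisfy the abstract hypotheses of the general convergence theorem.
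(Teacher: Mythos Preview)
Your proposal is correct and follows essentially the same approach as the paper: verify that $L^\V$ has the required form (via Theorem~\ref{t:microformula}), check operator convergence uniformly on compacts (again via Theorem~\ref{t:microformula}), observe that path-regularity~\eqref{Eqn:PathRegularity} is trivial since each step has deterministic length $\sqrt{2k\hh}$, and apply Theorem~\ref{AppendixConvergence}. The paper additionally remarks that, because $\phi\in C_0^\infty(M)$, the function $L_\hh\phi$ is supported in a fixed compact set for all small $\hh$, so uniform convergence on compacts is in fact uniform on all of $M$; this is a minor observation that does not affect your argument.
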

\begin{proof}
By Theorem \ref{t:microformula}, the operator $L^\V$ satisfies the assumptions of Theorem \ref{AppendixConvergence}. Also, the existence and uniqueness of $X^{0,q}_t$ follows. Now let $L_{\hh}$ be the operator associated to the random walks $X^{\hh,\cdot}_t$, as described above. Then for any $\phi\in C_0^{\infty}(M)$, we see that $L_{\hh} \phi\rightarrow L^\V \phi$ uniformly (on all of $M$) as $\hh\rightarrow0$, by Theorem \ref{t:microformula} and the fact that $L_{\hh} \phi$, for all small $\hh$, has support in a fixed compact set by construction. Finally, we have already noted that  the condition of Eq.~\eqref{Eqn:PathRegularity} holds trivially for the random walks $X^{\hh,q}_{t}$. Thus we can apply Theorem \ref{AppendixConvergence}, which gives the desired convergence of randoms walks.
\end{proof}


\subsection*{Acknowledgments}

The authors are grateful to Fr\'ed\'eric Jean for his advice on non-equiregular structures, and in particular, for explaining how to derive Eq.~\eqref{Eqn:DistanceComp}. We also thank Andrei Agrachev, Gr\'egoire Charlot, Erlend Grong, and Jose Veloso for useful and stimulating discussions. We finally thank the Institut Henri Poincar\'e (Paris) for the hospitality during the Trimester ``Geometry, Analysis and Dynamics on sub-Riemannian manifolds,'' where most of this research has been carried out.

This research has been partially supported by the European Research Council, ERC StG 2009 ``GeCoMethods'', contract n.\ 239748, by the ERC POC project ARTIV1 contract number 727283, by the ANR project ``SRGI'' ANR-15-CE40-0018, by a public grant as part of the Investissement d'avenir project, reference ANR-11-LABX-0056-LMH, LabEx LMH (in a joint call with Programme Gaspard Monge en Optimisation et Recherche Op\'erationnelle), by the iCODE institute, research project of the Idex Paris-Saclay, and by the SMAI project ``BOUM.''

\bibliographystyle{abbrv}
\bibliography{bibliorandom}

\end{document}